\definecolor{codegreen}{rgb}{0,0.6,0}
\definecolor{codegray}{rgb}{0.5,0.5,0.5}
\definecolor{codepurple}{rgb}{0.58,0,0.82}
\definecolor{backcolour}{rgb}{0.95,0.95,0.92}
\lstdefinestyle{mystyle}{
    backgroundcolor=\color{backcolour},   
    commentstyle=\color{codegreen},
    keywordstyle=\color{magenta},
    numberstyle=\tiny\color{codegray},
    stringstyle=\color{codepurple},
    basicstyle=\ttfamily\footnotesize,
    breakatwhitespace=false,         
    breaklines=true,                 
    captionpos=b,                    
    keepspaces=true,                 
    numbers=left,                    
    numbersep=5pt,                  
    showspaces=false,                
    showstringspaces=false,
    showtabs=false,                  
    tabsize=2
}
\newtheorem{theorem}{Theorem}
\newtheorem{obs}[theorem]{Observation}
\newtheorem{clm}[theorem]{Claim}
\newtheorem{lemma}[theorem]{Lemma}
\newtheorem{prop}[theorem]{Proposition}
\newtheorem{defn}[theorem]{Definition}
\newtheorem{conj}[theorem]{Conjecture}
\newtheorem{cor}[theorem]{Corollary}
\newtheorem{question}[theorem]{Question}
\DeclareMathOperator{\ex}{ex}
\newcommand{\abs}[1]{\left\vert#1\right\vert}
\newcommand{\re}{\text{Re}\,}
\newcommand{\TT}[1]{\overrightarrow{T_{#1}}}
\newcommand{\CC}[1]{\overrightarrow{C\hspace{4pt}}_{\hspace{-5pt}#1}}
\newcommand{\PP}[1] {\overrightarrow{P\hspace{4pt}}_{\hspace{-5pt}#1}} 
\newcommand{\Cinv}[1]{\stackrel{\nrightarrow}{{C\hspace{4pt}}_{\hspace{-5pt}#1}}}
\newcommand{\oldStuff}[1]{}
\providecommand*{\diff}%
{\@ifnextchar^{\DIfF}{\DIfF^{}}}\def\DIfF^#1{%
\mathop{\mathrm{\mathstrut d}}%
\nolimits^{#1}\gobblespace}\def\gobblespace{%
\futurelet\diffarg\opspace}\def\opspace{%
\let\DiffSpace\!%
\ifx\diffarg(%
\let\DiffSpace\relax\else\ifx\diffarg[%
\let\DiffSpace\relax\else\ifx\diffarg\{%
\let\DiffSpace\relax\fi\fi\fi\DiffSpace}
\begin{document}

\title{
Generalized Tur\'an problem for directed cycles\thanks{The first, second and fourth authors were supported by the National Science Centre grant 2021/42/E/ST1/00193. The second and fourth authors were participants of the tutoring programme under the Excellence Initiative at the Jagiellonian University.}
}

\author{
Andrzej Grzesik\thanks{Faculty of Mathematics and Computer Science, Jagiellonian University, {\L}ojasiewicza 6, 30-348 Krak\'{o}w, Poland. E-mail: {\tt andrzej.grzesik@uj.edu.pl}.}\and Justyna Jaworska\thanks{Faculty of Mathematics and Computer Science, Jagiellonian University, {\L}ojasiewicza 6, 30-348 Krak\'{o}w, Poland. E-mail: {\tt justynajoanna.jaworska@student.uj.edu.pl}.}\and
Bart\l{}omiej Kielak\thanks{Faculty of Mathematics and Computer Science, Jagiellonian University, {\L}ojasiewicza 6, 30-348 Krak\'{o}w, Poland. E-mail: {\tt bkielak@alumni.uj.edu.pl}.}\and 
Piotr Kuc\thanks{Faculty of Mathematics and Computer Science, Jagiellonian University, {\L}ojasiewicza 6, 30-348 Krak\'{o}w, Poland. E-mail: {\tt piotr.kuc@student.uj.edu.pl}.}\and 
Tomasz \'Slusarczyk\thanks{Department of Mathematics, Massachusetts Institute of Technology, Cambridge, MA 02139, USA. E-mail: {\tt tomaszsl@mit.edu}.}}

\date{}

\maketitle

\begin{abstract}
For integers $k, \ell \geq 3$, let $\ex(n, \CC{k}, \CC{\ell})$ denote the maximum number of directed cycles of length $k$ in any oriented graph on $n$ vertices which does not contain a directed cycle of length~$\ell$. We establish the order of magnitude of $\ex(n, \CC{k}, \CC{\ell})$ for every $k$ and $\ell$ and determine its value up to a lower error term when $k \nmid \ell$ and $\ell$ is large enough. Additionally, we calculate the value of $\ex(n, \CC{k}, \CC{\ell})$ for some other specific pairs $(k, \ell)$ showing that a diverse class of extremal constructions can appear for small values of $\ell$. 
\end{abstract}

\section{Introduction}

One of the fundamental questions in extremal graph theory, commonly referred to as the Tur\'an problem, is: `how many edges can an $n$-vertex graph have if it does not contain graph $F$ as a (not necessarily induced) subgraph?'. The asymptotic value of this number is known for many classes of graphs, e.g. cliques by Tur\'an's theorem~\cite{turan_thm} or non-bipartite graphs \cite{erdosstone}, but remains unknown for many bipartite graphs, e.g. even cycles~\cite{even_cycles}. 
 
Since Tur\'an's theorem was proved, many variants of the original problem were investigated. One straightforward variation is the following \textit{generalized Turán problem}~\cite{generalized_turan}: determining the maximum number of copies of a given graph $H$ in an $n$-vertex graph which does not contain a forbidden graph~$F$. We denote this quantity as $\text{ex}(n, H, F)$. 
Particular attention was paid to the case when both $H$ and $F$ are cycles -- starting with the Erd\H os pentagon conjecture \cite{erdos1984} on the value of $\text{ex}(n, C_5, C_3)$, through its solutions \cite{grzesik2012pentagon, c5_without_c3}, to general results on the order of magnitude~\cite{gishboliner2018} or exact values \cite{bekejanzer, gerbner2020generalized, grzesikkielak} for other cycle-lengths.
 
In this work we consider an analogous question in the setting of oriented graphs (directed graphs in which every pair of vertices is connected by at most one arc). More specifically, we focus on $\ex(n, \CC{k}, \CC{\ell})$ -- the maximum number of directed cycles of length $k$ in any $n$-vertex oriented graph which does not contain a subgraph isomorphic to a directed cycle of length $\ell$. 

In Section~\ref{sec:order_of_magnitude} we prove an exact order of magnitude of $\ex(n, \CC{k}, \CC{\ell})$ for every $k$ and $\ell$, by showing that if $k \nmid \ell$, then $\ex(n, \CC{k}, \CC{\ell}) = \Theta(n^{k})$, while if $k \mid \ell$, then $\ex(n, \CC{k}, \CC{\ell}) = \Theta(n^{k-1})$. We refer to the former case as the \emph{dense setting}, and to the latter case as the \emph{sparse setting}. 

In the dense setting, a natural candidate for the extremal construction is a balanced blow-up of $\CC{k}$ -- it has many copies of $\CC{k}$ and does not contain cycles of length not divisible by $k$, in particular $\CC{\ell}$. 
This construction can be further improved by reducing the number of blobs to the smallest divisor of $k$ greater than $2$ which does not divide $\ell$ (call this number $d$). We consider here only divisors greater than $2$ to enable the existence of a blow-up of $\CC{d}$.  
In Section \ref{sec:general_case} we prove that a balanced blow-up of $\CC{d}$ is indeed the best construction when $\ell$ is large enough (roughly at least~$2k^2$), but in the case $d \geq 5$ under an additional assumption that $k$ is odd or $\ell$ is even. 

This additional assumption is needed, because if $k$ is even and $\ell$ is odd, then we can consider an oriented graph that somewhat imitates an ``oriented blow-up of $\CC{2}$'' -- a random orientation of a complete balanced bipartite graph. This graph contains many copies of $\CC{k}$ when $2\mid k$ and no $\CC{\ell}$ when $2\nmid \ell$. However, it has less copies of $\CC{k}$ than a balanced blow-up of $\CC{d}$ if $d \leq 4$, that is why this construction is optimal for large enough $\ell$ if $d \geq 5$, $2\mid k$ and $2\nmid \ell$, as shown in Section~\ref{sec:random_bipartite_case}.

Then, in Section~\ref{sec:other_constructions}, we determine the value of $\ex(n, \CC{k}, \CC{\ell})$ for some particular values of $k$ and $\ell$, showing that when $\ell$ is comparable to $k$, then diverse extremal constructions can appear, including iterated blow-ups (for $\ex(n, \CC{4}, \CC{3})$), blow-ups of cycles with transitive tournaments in blobs (for $\ex(n, \CC{5}, \CC{3})$), or blow-ups of graphs other than cycles (for $\ex(n, \CC{5}, \CC{7})$). 

In Section~\ref{sec:sparse_case} we consider the sparse setting and determine the value of $\ex(n, \CC{3}, \CC{6})$ up to a lower order error term.

Finally, in Section~\ref{sec:directed}, we consider an analogous question in the setting of directed graphs. 
From the proven theorems for oriented graphs we derive an optimal bound, up to a lower order error term, for the maximum number of~$\CC{k}$ in a directed graph not containing $\CC{\ell}$ for large enough $\ell$ not divisible by $k$. 


\subsection{Preliminaries}

An \emph{oriented graph} is a pair $(V, E)$, where $V$ is a set of \emph{vertices} and $E \subset V\times V$ is a set of ordered pairs called \emph{arcs} with property that for every $u,v \in V$ either $uv \notin E$ or $vu \notin E$. In particular, $vv \notin E$ for any $v \in V$. From now on, unless specified otherwise, we will refer to oriented graphs simply as graphs. For $v \in V$, by $N^+(v) = \{u \in V : vu \in E\}$ we denote the \emph{out-neighborhood} of $v$ and by $N^-(v) = \{u \in V : uv \in E\}$ the \emph{in-neighborhood} of~$v$.

The \emph{directed path} $\PP{k}$ is a~graph on vertex set $v_1, v_2, \ldots, v_k$ with $k-1$ arcs $v_1v_2, v_2v_3, \ldots, v_{k-1}v_k$. We say that such a path \emph{starts} in $v_1$, \emph{ends} in $v_k$, while $v_2, \ldots, v_{k-1}$ are its \emph{internal vertices}. The \emph{length} of a~directed path is the number of its arcs.
We denote by $N^+_i(v)$ the set of end vertices of directed paths of length $i$ starting in $v$. $N^-_i(v)$ is defined analogously.
The \emph{directed cycle} $\CC{k}$ is a graph on vertex set $v_1, v_2, \ldots, v_k$ with $k$ arcs $v_1v_2, v_2v_3, \ldots, v_kv_1$.
The graph $\Cinv{k}$ is a graph on vertex set $v_1, v_2, \ldots, v_k$ and $k$ arcs $v_1v_2, v_2v_3, \ldots, v_{k-1}v_k, v_1v_k$. Note that in $\Cinv{k}$ one arc is oriented differently than in $\CC{k}$. In particular, the graph $\Cinv{3}$ is the transitive tournament on $3$ vertices, and is therefore sometimes referenced as a \emph{transitive triangle} $\TT{3}$.
The graph $\overrightarrow{K}_{n,n}$ is an oriented graph with vertex set $A \cup B$, where $|A| = |B| = n$, containing all arcs $ab$ for any $a \in A$ and $b\in B$.

A \emph{blow-up} of a graph $G$ on $k$ vertices $v_1, v_2, \ldots, v_k$ is any graph $G'$ on a vertex set partitioned into disjoint independent sets $V_1, V_2, \ldots, V_k$, in which there is an arc $uv$ for $u \in V_i$ and $v \in V_j$ if and only if there is an arc $v_iv_j$ in $G$. The sets $V_i$ are called \emph{blobs}. A blow-up is \emph{balanced} if $||V_i| - |V_j|| \leq 1$ for any $i, j$. Note that for every integer $n \geq k$ there exists a balanced blow-up of $G$ on $n$ vertices. 

By a \emph{copy} of a graph $H$ in $G$ we mean a subgraph of $G$ isomorphic to $H$. Observe that a set of $|V(H)|$ vertices in $G$ may contain more than one copy of $H$. 
As mentioned in the Introduction, by $\ex(n, H, F)$ we denote the maximum number of copies of $H$ among $n$-vertex graphs not containing a copy of $F$. 
Similarly, by $\ex(n, H, \{F_1, F_2, \ldots, F_k\})$ we denote the maximum number of copies of $H$ among $n$-vertex graphs which contain no copies of $F_1, \ldots, F_k$. 

An oriented graph $H$ is a \emph{homomorphic image} of an oriented graph $F$ if there exists a~surjective map $\varphi: V(F) \rightarrow V(H)$ such that $\varphi(u')\varphi(v') \in E(H)$ for every arc $u'v' \in E(F)$ and for every arc $uv \in E(H)$ there exists an arc $u'v' \in E(F)$ satisfying $\varphi(u')\varphi(v') = uv$.
Note that if $H$ is a homomorphic image of $F$, then a sufficiently large blow-up of $H$ contains $F$ as a subgraph.

The following observation is a direct consequence of the directed version of Szemerédi's Regularity Lemma \cite{directed_szemeredi} and the blow-up lemma \cite{blowup_lemma}.

\begin{obs}\label{homomorphic_img_removal} 
For any oriented graphs $F$ and $G$, if $G$ does not contain $F$ as a subgraph, then by removing $o(n^2)$ arcs one can guarantee that $G$ does not contain any homomorphic image of $F$.
\end{obs}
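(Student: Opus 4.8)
The plan is to run the standard regularity-method removal argument in its directed form. Reformulating the conclusion, it suffices to show that for every $\eta>0$ there is an $n_0$ such that every oriented graph $G$ on $n\ge n_0$ vertices with no copy of $F$ has a subgraph $G'$ obtained by deleting at most $\eta n^2$ arcs and containing no homomorphic image of $F$. I would fix the parameters in the order forced by the method. First set $t:=\abs{V(F)}$ and pick a single integer $s=s(F)$ large enough that, for \emph{every} loopless digraph $H'$ on at most $t$ vertices that is a homomorphic image of $F$ (allowing $H'$ to contain digons, which changes nothing in the relevant embedding), the balanced blow-up of $H'$ with $s$ vertices in each part already contains $F$; such an $s$ exists since there are finitely many such $H'$ and, as recorded just before the statement, a sufficiently large blow-up of any homomorphic image of $F$ contains $F$. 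Then set $d:=\eta/10$, choose $\varepsilon>0$ small enough for the directed counting/embedding lemma with parameters $d,t,s$, and only then apply the directed regularity lemma of~\cite{directed_szemeredi} with accuracy $\varepsilon$ and at least $m_0:=\lceil 10/\eta\rceil$ parts.

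This produces a partition $V(G)=V_0\cup V_1\cup\dots\cup V_M$ with $m_0\le M\le M_0$, $\abs{V_0}\le\varepsilon n$, equal-sized parts $V_1,\dots,V_M$, and at most $\varepsilon\binom{M}{2}$ pairs $\{V_i,V_j\}$ failing to have both $(V_i,V_j)$ and $(V_j,V_i)$ $\varepsilon$-regular. I would take $G'$ to be $G$ with all arcs deleted that touch $V_0$, lie inside a single part, run between an irregular pair, or run from $V_i$ to $V_j$ with arc-density $d(V_i\to V_j)\le d$; a routine estimate bounds the number of deleted arcs by $(3\varepsilon+m_0^{-1}+d)\,n^2\le\eta n^2$. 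Let $R$ be the reduced digraph on $\{1,\dots,M\}$ with an arc $ij$ precisely when $(V_i,V_j)$ is $\varepsilon$-regular with $d(V_i\to V_j)>d$, so that every arc of $G'$ joins parts $V_i,V_j$ with $i\ne j$ and $ij\in E(R)$. Now suppose for contradiction that $G'$ still contains a homomorphic image $H$ of $F$, via a surjection $\varphi\colon V(F)\to V(H)$ and an embedding $H\hookrightarrow G'$; we may assume $F$, hence $H$, has no isolated vertex, so every vertex of $H$ lies in some $V_i$ with $i\ge 1$. Sending each vertex of $H$ to the index of its part gives a homomorphism $\psi\colon H\to R$ that collapses no arc (intra-part arcs were deleted), so $\psi\circ\varphi$ realizes some homomorphic image $H'$ of $F$, on at most $t$ vertices, as a subgraph of $R$. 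Applying the directed counting/embedding lemma (a consequence of the blow-up lemma~\cite{blowup_lemma}) to the parts of $G$ indexed by $V(H')$, whose common size tends to infinity with $n$ since $M\le M_0$ is bounded, we obtain inside $G$ a balanced blow-up of $H'$ with $s$ vertices per part, and hence, by the choice of $s$, a copy of $F$ — contradicting the hypothesis. Therefore $G'$ has no homomorphic image of $F$, as required.

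The two non-elementary ingredients are precisely those already flagged in the statement — the directed regularity lemma of~\cite{directed_szemeredi} and the directed blow-up/counting lemma built on~\cite{blowup_lemma} — so the real difficulty is not any single step but bookkeeping: keeping the quantifier order correct ($s$ from $F$ alone, then $d$ from $\eta$, then $\varepsilon$ from $d,t,s$, then the partition), and the one content-bearing observation that $\psi$ collapses no arc of $H$, so that $H'$ is a genuine homomorphic image of $F$ sitting inside $R$ rather than a further contraction of it. A minor side point (not really an obstacle) is that $H'$ may acquire digons even when $H$ has none, which is exactly why $s$ was taken to work uniformly over all loopless digraphs rather than only oriented ones.
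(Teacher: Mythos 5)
Your proof is correct and follows exactly the route the paper points to (but does not spell out): the directed regularity lemma plus the blow-up/embedding lemma, with the cleanup step, the reduced digraph, and the observation that an embedded homomorphic image projects to a homomorphic image inside the reduced digraph. Your care about $H'$ possibly containing digons and about fixing $s$ uniformly over all (finitely many) small loopless digraphs is the right way to make the argument airtight.
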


Oftentimes, when calculating the number of copies of $H$ in an $n$-vertex graph $G$, we count the number of homomorphisms from $H$ to $G$ rather than subgraphs of $G$ isomorphic to $H$, because they are much simpler to calculate. This makes no difference when we are interested in a value up to $o(n^{|V(H)|})$ term, since the fraction of homomorphisms that are not injective is of order $o(n^{|V(H)|})$.

In the proofs, we often use a corollary from the following theorem of Brauer \cite{brauer1942problem} generalizing Sylvester's solution \cite{Syl82} of the well-known Frobenius Coin Problem.

\begin{theorem}[Brauer \cite{brauer1942problem}] \label{thm:brauer}
    Let $a_1, a_2, \ldots, a_k$ be positive integers that satisfy $\gcd (a_1, a_2, \ldots, a_k) = 1$. Denote $d_i = \gcd (a_1, \ldots, a_i)$. Then any integer $$\ell > a_2 \frac{d_1}{d_2} + a_3 \frac{d_2}{d_3} + \ldots + a_k \frac{d_{k-1}}{d_k} - \sum_{i=1}^{k}a_i$$ can be represented as $x_1a_1 + x_2a_2 + \ldots + x_ka_k$ for non-negative integers $x_1, \ldots, x_k$.
\end{theorem}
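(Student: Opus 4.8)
The plan is to prove this by induction on $k$. For $k=1$, the hypothesis $\gcd(a_1)=1$ forces $a_1=1$, the claimed bound becomes $\ell>-1$, and every integer $\ell\ge 0$ is a nonnegative multiple of $1$; so the base case is immediate.

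For the inductive step, fix $k\ge 2$ and set $d:=d_{k-1}=\gcd(a_1,\dots,a_{k-1})$. Two divisibility facts drive the argument. First, $\gcd(d,a_k)=\gcd(a_1,\dots,a_k)=1=d_k$. Second, $d$ divides each $a_i$ and each partial gcd $d_i$ for $i\le k-1$, so $a_i':=a_i/d$ are positive integers with $\gcd(a_1',\dots,a_i')=d_i/d$; in particular $\gcd(a_1',\dots,a_{k-1}')=1$. Applying the induction hypothesis to $a_1',\dots,a_{k-1}'$: since every ratio $\frac{d_{i-1}/d}{d_i/d}$ equals $\frac{d_{i-1}}{d_i}$, its threshold is exactly $C/d$, where $C:=\sum_{i=2}^{k-1}a_i\frac{d_{i-1}}{d_i}-\sum_{i=1}^{k-1}a_i$. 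Multiplying representations by $d$, one concludes: every multiple of $d$ strictly greater than $C$ is a nonnegative integer combination of $a_1,\dots,a_{k-1}$.

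Now let $\ell$ exceed the stated threshold, which (using $d_k=1$) equals precisely $C+(d-1)a_k$. Because $\gcd(a_k,d)=1$, there is a unique $x_k\in\{0,1,\dots,d-1\}$ with $x_ka_k\equiv\ell\pmod d$. Put $n:=\ell-x_ka_k$; then $n$ is a multiple of $d$ and $n\ge\ell-(d-1)a_k>C$, so by the previous paragraph $n=x_1a_1+\dots+x_{k-1}a_{k-1}$ with all $x_i\ge 0$, whence $\ell=x_1a_1+\dots+x_{k-1}a_{k-1}+x_ka_k$, as required. (For $k=2$ this specializes to Sylvester's classical residue argument: pick $x_2$ matching $\ell$ modulo $a_1$, then the remainder is forced to be a nonnegative multiple of $a_1$.)

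The one point requiring care is the bookkeeping of the threshold: one must verify that the bound produced by the induction hypothesis, after rescaling by $d$ and incorporating the contribution of $a_k$, coincides exactly with the stated bound, so that the strict inequality $n>C$ is available precisely when $\ell$ exceeds the claimed value. Beyond this, the argument is routine divisibility arithmetic, so I anticipate no substantive obstacle.
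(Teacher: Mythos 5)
The paper states this theorem with a citation to Brauer \cite{brauer1942problem} and gives no proof, so there is no in-paper argument to compare against; I checked your proposal on its own merits. Your induction on $k$ is correct: dividing $a_1,\ldots,a_{k-1}$ by $d=d_{k-1}$ preserves the partial-gcd ratios (indeed $d\mid d_i$ for all $i\le k-1$ since $d_{k-1}\mid d_{k-2}\mid\cdots\mid d_1$), the inductive hypothesis applied to $a_1/d,\ldots,a_{k-1}/d$ rescales to show that every multiple of $d$ exceeding $C=\sum_{i=2}^{k-1}a_i\frac{d_{i-1}}{d_i}-\sum_{i=1}^{k-1}a_i$ is a nonnegative combination of $a_1,\ldots,a_{k-1}$, and your expansion of the stated threshold as $C+(d-1)a_k$ (using $d_k=1$) is exact, so the residue choice of $x_k\in\{0,\ldots,d-1\}$ with $x_ka_k\equiv\ell\pmod d$ makes $n=\ell-x_ka_k$ a multiple of $d$ with $n>C$, completing the step. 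This is essentially Brauer's original argument, and as you note it specializes to Sylvester's classical modular proof when $k=2$.
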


\begin{cor}\label{cor:coin_problem}
    For positive integers $a_1, a_2, \ldots, a_k$ denote $d_i = \gcd (a_1, \ldots, a_i)$. Then any integer $\ell$ divisible by $d_k$ and satisfying $$\ell > a_2 \frac{d_1}{d_2} + a_3 \frac{d_2}{d_3} + \ldots + a_k \frac{d_{k-1}}{d_k} - \sum_{i=1}^{k}a_i$$ can be represented as $x_1a_1 + x_2a_2 + \ldots + x_ka_k$ for non-negative integers $x_1, \ldots, x_k$.
    In particular, when $k=2$, any integer $\ell$ divisible by $d_2 = \gcd(a_1,a_2)$ and satisfying $\frac{\ell}{d_2} \ge \left(\frac{a_1}{d_2}-1\right)\left(\frac{a_2}{d_2}-1\right)$ can be expressed as $\ell = a_1x_1+a_2x_2$ for some non-negative integers $x_1$ and $x_2$.
\end{cor}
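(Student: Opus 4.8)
The plan is to deduce Corollary~\ref{cor:coin_problem} from Theorem~\ref{thm:brauer} by dividing out the common divisor $d := d_k = \gcd(a_1, \dots, a_k)$ and rescaling everything in sight. So I would first set $a_i' := a_i/d$ for $1 \le i \le k$; these are positive integers with $\gcd(a_1', \dots, a_k') = 1$. Writing $d_i' := \gcd(a_1', \dots, a_i')$, the elementary identity $\gcd(cb_1, \dots, cb_m) = c\gcd(b_1, \dots, b_m)$ gives $d_i = d\cdot d_i'$ for every $i$, so in particular the consecutive ratios are unchanged: $d_{i-1}'/d_i' = d_{i-1}/d_i$. Since $d \mid \ell$ we may write $\ell = d\ell'$ with $\ell'$ a positive integer, and dividing the hypothesised inequality through by $d$ (using the invariance of the ratios and the fact that $\sum a_i = d\sum a_i'$) turns it into
\[
\ell' > a_2'\frac{d_1'}{d_2'} + a_3'\frac{d_2'}{d_3'} + \dots + a_k'\frac{d_{k-1}'}{d_k'} - \sum_{i=1}^{k} a_i'.
\]
Now Theorem~\ref{thm:brauer} applies to $a_1', \dots, a_k'$ and yields non-negative integers $x_1, \dots, x_k$ with $\ell' = x_1 a_1' + \dots + x_k a_k'$; multiplying through by $d$ gives $\ell = x_1 a_1 + \dots + x_k a_k$, which is the assertion.

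For the final ``in particular'' statement, I would specialise to $k = 2$. Then $d_1 = a_1$ and $d_2 = \gcd(a_1, a_2)$, so the general bound reads $\ell > a_2 a_1 / d_2 - a_1 - a_2$. Dividing by $d_2$ and setting $b_i := a_i/d_2$, so that $b_1$ and $b_2$ are coprime integers, this becomes $\ell/d_2 > b_1 b_2 - b_1 - b_2$; since both sides are integers — here using $d_2 \mid \ell$ — it is equivalent to $\ell/d_2 \ge b_1 b_2 - b_1 - b_2 + 1 = (b_1 - 1)(b_2 - 1)$, which is exactly the stated hypothesis, so the representability follows from the first part.

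There is no genuine obstacle here: the whole argument is a mechanical rescaling. The only points that warrant care are verifying that each ratio $d_{i-1}/d_i$ — and hence the entire right-hand side of Brauer's bound — scales by exactly $d$ when the $a_i$ do, which is precisely why the divisibility hypothesis $d \mid \ell$ is the right condition to transfer the inequality from $\ell$ to $\ell'$, and, in the $k=2$ case, the passage from the strict inequality to the $\ge$ form, which relies on integrality.
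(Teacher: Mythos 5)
Your proof is correct and follows essentially the same route as the paper: divide everything by $d_k$, observe that the consecutive ratios $d_{i-1}/d_i$ are unchanged, apply Theorem~\ref{thm:brauer} to the rescaled integers, and multiply back by $d_k$. You additionally write out the integrality argument behind the $k=2$ specialization, which the paper states in the corollary but leaves to the reader.
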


\begin{proof}
We apply Theorem~\ref{thm:brauer} for integers $\frac{a_1}{d_k}, \ldots, \frac{a_k}{d_k}$ obtaining that all integers greater than 
$$\frac{a_2}{d_k} \frac{d_1}{d_2} + \frac{a_3}{d_k} \frac{d_2}{d_3} + \ldots + \frac{a_k}{d_k} \frac{d_{k-1}}{d_k} - \sum_{i=1}^{k}\frac{a_i}{d_k}$$ can be represented as $x_1\frac{a_1}{d_k} + x_2\frac{a_2}{d_k} + \ldots + x_k\frac{a_k}{d_k}$ for non-negative integers $x_1, \ldots, x_k$.
Multiplying by $d_k$ we obtain the conclusion of the corollary.
\end{proof}

In everything that follows, when using the asymptotic notation $o$, $\mathcal{O}$ and $\Theta$ we treat $k$ and $\ell$ as constants.

\section{Order of magnitude}\label{sec:order_of_magnitude}

In this section we show the following theorem determining the order of magnitude of $\ex(n, \CC{k}, \CC{\ell})$ for every $k$ and $\ell$. 

\begin{theorem}\label{thm:order_of_magnitude}
Let $k, \ell \geq 3$ be different integers. If $k \nmid \ell$, then $\ex(n, \CC{k}, \CC{\ell}) = \Theta(n^{k})$, while if $k \mid \ell$, then $\ex(n, \CC{k}, \CC{\ell}) = \Theta(n^{k-1})$.
\end{theorem}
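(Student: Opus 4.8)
The plan is to prove matching upper and lower bounds in each of the two cases. The lower bounds are constructive and should be quick. In the dense case $k \nmid \ell$, take a balanced blow-up of $\CC{k}$ on $n$ vertices: every directed cycle in it has length divisible by $k$, so it contains no $\CC{\ell}$, and the number of copies of $\CC{k}$ obtained by picking one vertex from each blob is $\Theta(n^k)$. In the sparse case $k \mid \ell$, I would instead take a balanced blow-up of $\CC{k}$ in which one fixed blob has been shrunk to a single vertex (equivalently, glue $\lfloor n/(k-1)\rfloor$-many copies of $\CC{k}$ along a common vertex, or just take a ``book'' of $k$-cycles through one vertex): this still has only cycles of length a multiple of $k$, hence no $\CC{\ell}$, and it contains $\Theta(n^{k-1})$ copies of $\CC{k}$. (One should double-check that no shorter cycle sneaks in, but since all arcs respect the cyclic order on the $k$ classes, every closed walk has length divisible by $k$.)

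For the upper bounds, the trivial bound $\ex(n,\CC{k},\CC{\ell}) = \mathcal{O}(n^k)$ is immediate since there are at most $n^k$ homomorphisms of $\CC{k}$ into any $n$-vertex graph; this already settles the dense case. The real content is the upper bound $\ex(n,\CC{k},\CC{\ell}) = \mathcal{O}(n^{k-1})$ when $k \mid \ell$. Here I would argue by contradiction: suppose an $n$-vertex $\CC{\ell}$-free graph $G$ has $\omega(n^{k-1})$ copies of $\CC{k}$. By Observation~\ref{homomorphic_img_removal}, after deleting $o(n^2)$ arcs we may assume $G$ contains no homomorphic image of $\CC{\ell}$; this deletion destroys only $o(n^{k-1})$ copies of $\CC{k}$ (each arc lies in $\mathcal{O}(n^{k-2})$ copies), so $G$ still has many copies of $\CC{k}$. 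Now I want to find a single arc, or a single vertex, through which pass $\omega(n^{k-2})$ copies of $\CC{k}$, and then ``concatenate'' enough of these $k$-cycles to build a homomorphic image of $\CC{\ell}$. Concretely: since $\ell = mk$ for some integer $m \ge 2$, if I can find $m$ distinct copies $C_1,\dots,C_m$ of $\CC{k}$ all sharing a common vertex $v$ and otherwise arranged so that traversing $C_1$, then $C_2$, \dots, then $C_m$ back to $v$ forms a closed directed walk, then that closed walk is a homomorphic image of $\CC{mk} = \CC{\ell}$, contradicting the homomorphism-freeness. The averaging step that produces $\omega(n^{k-2})$ copies of $\CC{k}$ through one vertex is routine; the combinatorial step of selecting $m$ of them that chain together into a $\CC{\ell}$-homomorphic image is where care is needed.

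The main obstacle, therefore, is this last step: guaranteeing that among the many $k$-cycles through a fixed vertex $v$ one can pick $m$ of them which are ``internally disjoint enough'' (or at least whose union admits a homomorphism onto $\CC{\ell}$). A clean way to handle it: among the $\omega(n^{k-2})$ copies of $\CC{k}$ through $v$, consider the arc $e$ leaving $v$; some such arc $e = vu$ lies in $\omega(n^{k-3})$ of them, i.e. there are $\omega(n^{k-3})$ directed $u$–$v$ paths of length $k-1$ avoiding repeats — certainly enough (for $n$ large) to find $m$ of them pairwise disjoint except at $u$ and $v$ (each path has $k-2$ internal vertices, so greedily removing used vertices costs only $\mathcal{O}(n^{k-4})$ paths per step). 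Stitching these $m$ paths through $v$ and $u$ in a cycle yields a closed walk of length $m(k-1) + m = mk = \ell$; wait — more cleanly, take $m$ such $v$–$v$ closed walks of length $k$ sharing only $v$, and concatenate them to get a closed walk of length $mk$ that is a homomorphic image of $\CC{\ell}$. This contradiction establishes $\ex(n,\CC{k},\CC{\ell}) = \mathcal{O}(n^{k-1})$, completing the proof. I would also need to verify the small-exponent edge cases ($k = 3$, where $k - 2 = 1$) separately, but the same averaging-and-concatenation idea goes through with trivial modifications.
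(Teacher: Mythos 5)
Your lower bounds (balanced blow-up of $\CC{k}$ for the dense case, blow-up with one blob shrunk to a singleton for the sparse case) and the trivial $\mathcal{O}(n^k)$ upper bound in the dense case are fine and match the paper. The problem is in the upper bound for the sparse case $\ell = mk$, where your bookkeeping is off by a factor of $n$ in a way that sinks the argument. Observation~\ref{homomorphic_img_removal} lets you delete $o(n^2)$ arcs, but each arc can lie in $\Theta(n^{k-2})$ copies of $\CC{k}$, so this deletion can destroy as many as $o(n^2)\cdot\Theta(n^{k-2}) = o(n^k)$ copies, \emph{not} $o(n^{k-1})$ as you claim. Since $o(n^k)$ can dominate $\omega(n^{k-1})$, you cannot conclude that ``$G$ still has many copies of $\CC{k}$'' after clearing; you may in fact have destroyed all of them. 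Observation~\ref{homomorphic_img_removal} is the right tool in the dense regime where $\Theta(n^k)$ copies are expected, but it is too blunt for a target of $\mathcal{O}(n^{k-1})$.

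The paper avoids this by never invoking the regularity-based cleanup in the sparse case. Instead it declares an arc \emph{thick} if it lies in at least $k^2 t\, n^{k-3}$ copies of $\CC{k}$ (with $t=\ell/k$), and iteratively deletes non-thick arcs; since there are $\mathcal{O}(n^2)$ arcs, this controlled removal destroys only $\mathcal{O}(n^{k-1})$ copies. In the remaining all-thick graph, it then constructs an \emph{actual} subgraph $\CC{kt}$ (not merely a closed walk): for $t\le k-1$, from two internally disjoint length-$t$ paths $P,Q$ between a pair of vertices together with $t$ mutually disjoint $k$-cycles built one on each arc of $P$; and for $t\ge k$ it falls back to induction on $t$ by showing a $\CC{k(t-k+2)}$ would force a $\CC{kt}$. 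Your ``chain $m$ $k$-cycles through a common vertex $v$'' construction only yields a closed walk of length $mk$ (with $v$ repeated $m$ times), hence at best a homomorphic image of $\CC{\ell}$ rather than a genuine $\CC{\ell}$ — so you really do need the homomorphism-freeness from Observation~\ref{homomorphic_img_removal}, and that is exactly what is unavailable here. To repair the proof you would need to replace the regularity cleanup with a controlled, threshold-based arc removal and then build an honest $\CC{\ell}$ subgraph rather than a closed walk, which is essentially the route the paper takes.
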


\begin{proof}
Clearly, the number of copies of $\CC{k}$ in an $n$-vertex graph is upper bounded by $n^k$. Since a balanced blow-up of $\CC{k}$ (see Figure \ref{fig:constr_dense}) does not contain any cycles of length not divisible by $k$ and contains asymptotically $\left( \frac{n}{k}\right)^k$ copies of $\CC{k}$, we immediately obtain the first part of the theorem. 

For the second part, note that a blow-up of $\CC{k}$ with one blob of size $1$ and remaining blobs balanced (see Figure \ref{fig:blowup_constr_sparse}) contains no cycles of length different than $k$ and has asymptotically $\left(\frac{n-1}{k-1}\right)^{k-1}$ copies of $\CC{k}$. This gives the lower bound for the second part of the theorem.

\begin{figure}[ht]
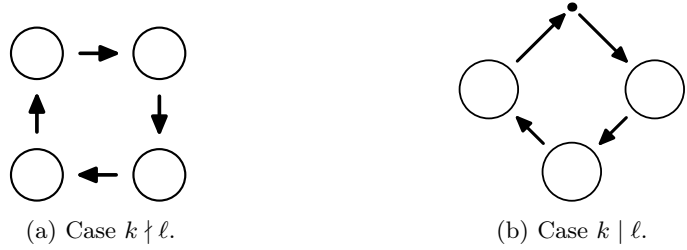

\centering
\begin{subfigure}{.4\textwidth}
  \centering
  \includegraphics[width=.4\linewidth]{drawings/blowup-empty.mps}
  \caption{Case $k \nmid \ell$.}
  \label{fig:constr_dense}
\end{subfigure}
\begin{subfigure}{.2\textwidth}
\end{subfigure}
\begin{subfigure}{.4\textwidth}
  \centering
  \includegraphics[width=.5\linewidth]{drawings/construction-sparse.mps}
  \caption{Case $k \mid \ell$.}
  \label{fig:blowup_constr_sparse}
\end{subfigure}

\caption{Constructions providing lower bounds for Theorem \ref{thm:order_of_magnitude}.}
\end{figure}

It remains to show that for any integers $k \geq 3$ and $t \geq 2$ it holds $\ex(n, \CC{k}, \CC{kt}) = \mathcal{O}(n^{k-1})$.
Let $G$ be any extremal graph on $n$ vertices, i.e., an oriented graph not containing $\CC{kt}$ and having the maximum possible number of $\CC{k}$. 

We say that an arc of $G$ is \emph{thick} if there exist at least $k^2tn^{k-3}$ different copies of $\CC{k}$ containing this arc. Remove all arcs in $G$ which are not thick, as long as there exist such. If there are no arcs left, then we removed $\mathcal{O}(n^{k-1})$ copies of $\CC{k}$ from $G$ and the theorem holds. Therefore, we can assume that every arc in $G$ is thick. 

The proof follows by induction on $t$. Firstly, we consider the case $t \in \{2, 3, \ldots, k-1\}$.

\textbf{Case 1: $t \leq k-1$.}
Let $v, w \in V(G)$ be any two vertices of $G$. We claim that every two directed paths of length $t$ that start in $v$ and end in $w$ share at least one internal vertex. Assuming otherwise, let $P$ and $Q$ be directed paths of length $t$ that share only the endpoints. Since each arc of $G$ is thick, we can find $t$ copies $D_1, \ldots, D_{t}$ of $\CC{k}$, each containing exactly one arc of $P$ and not sharing other vertices with $P$, $Q$ and other cycles $D_1, \ldots, D_{t}$ (see Figure \ref{fig:oomsub1}). But this creates a~copy of $\CC{kt}$ in $D_1 \cup \ldots \cup D_{t} \cup Q$, which is a~contradiction.

In particular, if we pick any two vertices $v, w$ of $G$, then there exist at most $(t-1)^2 n^{t-2}$ directed paths of length $t$ from $v$ to $w$. To see this, just fix any directed path $P$ from $v$ to $w$ and note that any other such directed path must share at least one internal vertex with $P$.

Now, we can count copies of $\CC{k}$ in $G$ in the following way. First, we choose any two vertices $v, w \in V(G)$, then we pick a~directed path of length $t$ from $v$ to $w$, and finally we choose the remaining $k - t - 1$ vertices to close the cycle. This way, we will count at most
$$ n(n-1) (t-1)^2 n^{t-2} n^{k - t - 1} = \mathcal{O}(n^{k-1})$$
copies of $\CC{k}$. Since each copy of $\CC{k}$ was counted at least once, we get the desired upper bound.

\begin{figure}[ht]
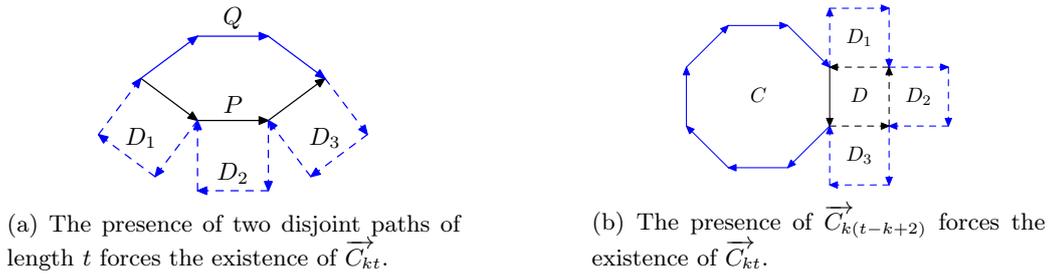

\centering
\begin{subfigure}{.4\textwidth}
  \centering
  \includegraphics[width=.6\linewidth]{drawings/oom-1.mps}
  \caption{The presence of two disjoint paths of length $t$ forces the existence of $\CC{kt}$.}
  \label{fig:oomsub1}
\end{subfigure}
\begin{subfigure}{.1\textwidth}
$ $
\end{subfigure}
\begin{subfigure}{.4\textwidth}
  \centering
  \includegraphics[width=.6\linewidth]{drawings/oom-2.mps}
  \caption{The presence of $\CC{k(t-k+2)}$ forces the existence of $\CC{kt}$.}
  \label{fig:oomsub2}
\end{subfigure}
\caption{Examples for $k=4$ and $t=3$, $t=4$ respectively.}
\label{fig:oom}
\end{figure}

\textbf{Case 2: $t \geq k$.}
Assume that $G$ contains a directed cycle $C$ of length $k(t-k+2)$. Then, since each arc of $G$ is thick, there exists a~copy $D$ of $\CC{k}$ containing exactly one arc of $C$ and not sharing other vertices of $C$, as well as there are copies $D_1, \ldots, D_{k-1}$ of $\CC{k}$, each containing exactly one arc of $D$ and not sharing other vertices of $C$, $D$ or other cycles $D_1, \ldots, D_{k-1}$ (see Figure \ref{fig:oomsub2}). 
But then, $C \cup D_1 \cup \ldots \cup D_{k-1}$ contains a~copy of $C_{kt}$ as $kt-k(k-2) - 1 + (k-1)(k-1) = kt$. This gives a~contradiction.

Since $G$ does not contain $\CC{k(t-k+2)}$, we get by induction that it has at most $\mathcal{O}(n^{k-1})$ copies of $\CC{k}$, which finishes the proof.
\end{proof}

\section{The dense setting}\label{sec:dense_case}

When determining $\ex(n, \CC{k}, \CC{\ell})$ in the dense setting, our usual framework is to take any extremal graph $G$ and remove $o(n^2)$ arcs using Observation~\ref{homomorphic_img_removal} to delete from $G$ all homomorphic images of~$\CC{\ell}$, that is, closed directed walks of length $\ell$. Note that this way we remove only $o(n^k)$ copies of $\CC{k}$. Finally, we delete from $G$ all arcs and vertices that do not occur in any $\CC{k}$. We refer to a graph obtained in such a process as a \emph{$(k,\ell)$-cleared} graph. 

\begin{defn}\label{def:kl-cleared}
For any integers $k, \ell \geq 3$ such that $k \nmid \ell$, an oriented graph $G$ is $(k, \ell)$-\emph{cleared} if it does not contain any homomorphic image of $\CC{\ell}$ and every arc and vertex of $G$ occurs in some $\CC{k}$.
\end{defn}

The advantage of $(k, \ell)$-cleared graphs is that they often do not contain specific small subgraphs, which greatly simplifies the proofs.

\begin{obs}\label{obs:cleanup_t3}
For any integer $k$, neither a $(k, k+1)$-cleared graph nor a $(k, 2k-1)$-cleared graph contains a transitive triangle.
\end{obs}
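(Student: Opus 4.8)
The plan is to suppose for contradiction that a $(k,\ell)$-cleared graph $G$ (with $\ell = k+1$ or $\ell = 2k-1$) contains a transitive triangle on vertices $a, b, c$ with arcs $ab$, $bc$, $ac$. The key observation is that a transitive triangle carries both a path of length $2$ (namely $a \to b \to c$) and a path of length $1$ (namely $a \to c$) between the same pair of endpoints $a$ and $c$. Since every arc of $G$ lies in some $\CC{k}$, the arc $ac$ extends to a directed path $R$ of length $k-1$ from $c$ back to $a$. Concatenating $R$ with the path $a \to b \to c$ yields a closed directed walk of length $(k-1) + 2 = k+1$, and concatenating $R$ with the arc $a \to c$ yields a closed directed walk of length $(k-1)+1 = k$.

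For the case $\ell = k+1$, the first closed walk is already a homomorphic image of $\CC{k+1}$, contradicting that $G$ is $(k, k+1)$-cleared; so we are done there immediately. For the case $\ell = 2k-1$ the length-$(k+1)$ walk is harmless, so instead I would use the length-$k$ closed walk $W$ obtained from $ac$ together with $R$: since $G$ contains no homomorphic image of $\CC{k}$ of the wrong length but does contain genuine $\CC{k}$'s, I should be careful — actually the right move is to take a genuine copy $\CC{k}$ through the arc $bc$, i.e.\ a directed path $S$ of length $k-1$ from $c$ to $b$, and then form the closed walk $a \to b \to c \leadsto_S b$? That does not close up. Let me instead take a $\CC{k}$ through $ab$: a path of length $k-1$ from $b$ to $a$, call it $P$; then $P$ together with the path $a \to c$?—$c$ is not $b$. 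The clean choice: take a $\CC{k}$ through the arc $ac$, giving a path $R$ of length $k-1$ from $c$ to $a$; then $a \to b \to c \leadsto_R a$ has length $k+1$ (harmless for $\ell=2k-1$), but $R$ reversed isn't available. So for $\ell = 2k-1$ I would take a $\CC{k}$ through $ab$ giving a $c$-avoiding path $P$ of length $k-1$ from $b$ to $a$, and a $\CC{k}$ through $bc$ giving a path $Q$ of length $k-1$ from $c$ to $b$; then the closed walk $b \leadsto_P a \to c \leadsto_Q b$ using arc $ac$ has length $(k-1) + 1 + (k-1) = 2k-1$, a homomorphic image of $\CC{2k-1}$, contradiction.

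The main obstacle — really the only subtlety — is that Observation~\ref{homomorphic_img_removal}'s guarantee is exactly what we need: we only require \emph{homomorphic} images of $\CC{k+1}$ (resp.\ $\CC{2k-1}$) to be forbidden, so these closed directed walks suffice and no injectivity or disjointness of $P$, $Q$, $R$ is needed. One should just double-check in the $\ell = 2k-1$ case that $2k-1 \neq k$, i.e.\ $k \neq 1$, which holds since $k \geq 3$, so that a length-$(2k-1)$ closed walk is genuinely a homomorphic image of the forbidden cycle and not accidentally of the counted cycle $\CC{k}$; and similarly $k+1 \neq k$ trivially. Hence in both cases the existence of a transitive triangle produces a forbidden homomorphic image, contradicting $(k,\ell)$-clearedness.
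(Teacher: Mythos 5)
Your proposal is correct and matches the paper's approach: extend each arc of the transitive triangle to a $\CC{k}$, then combine the resulting length-$(k-1)$ paths with the triangle's arcs to form closed directed walks of length $k+1$ (replacing $ac$ with $a\to b\to c$) and $2k-1$ (using the two paths through $ab$ and $bc$ together with the arc $ac$), which are forbidden homomorphic images. The paper's own proof is just a terser version of this, and your remark that no disjointness of the paths is needed — only closed walks — is exactly the right observation.
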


\begin{proof}
Let us assume that a $(k, \ell)$-cleared graph contains a transitive triangle. Since each of its arcs occurs in $\CC{k}$, there are directed paths of length $k-1$ between the endpoints of the arcs. Together with the arcs of the transitive triangle, they form a homomorphic image of $\CC{k+1}$ and a homomorphic image of $\CC{2k-1}$, forbidden if $\ell=k+1$ or $\ell=2k-1$. This is visualized for $k=4$ in Figure~\ref{fig:T3}.
\end{proof}

\begin{figure}[ht]
    \centering
    \includegraphics[width=0.4\textwidth]{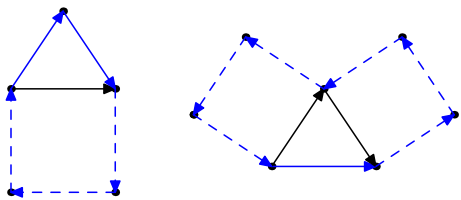}
    \caption{For $k = 4$, existence of $\TT{3}$ implies existence of homomorphic images of $\CC{5}$ and $\CC{7}$.}\label{fig:T3}
\end{figure}

\begin{obs}\label{obs:cleanup_small_cycles}
For any integers $m \geq 3$, $x \geq 1$ and $y \geq 0$, a $(k, mx+ky)$-cleared graph does not contain $\CC{m}$.
\end{obs}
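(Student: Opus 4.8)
The plan is to argue by contradiction. Suppose $G$ is a $(k,\ell)$-cleared graph with $\ell = mx+ky$ which nonetheless contains a copy of $\CC{m}$, say on vertices $v_1, v_2, \ldots, v_m$ with arcs $v_1v_2, v_2v_3, \ldots, v_mv_1$. I will exhibit a closed directed walk of length exactly $\ell$ in $G$; since a closed directed walk of length $\ell$ is precisely a homomorphic image of $\CC{\ell}$ appearing in $G$, this contradicts the assumption that $G$ is $(k,\ell)$-cleared (Definition~\ref{def:kl-cleared}).

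First I would use the defining property that every arc of a $(k,\ell)$-cleared graph lies in a copy of $\CC{k}$. Applying this to the arc $v_1v_2$, we obtain vertices $w_3, \ldots, w_k$ for which $v_1 \to v_2 \to w_3 \to \cdots \to w_k \to v_1$ is a directed cycle of length $k$. In particular, $v_2 \to w_3 \to \cdots \to w_k \to v_1 \to v_2$ is a closed directed walk $W$ of length $k$ based at the vertex $v_2$.

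Next I would assemble the desired walk. Traversing $\CC{m}$ exactly $x$ times yields a closed directed walk of length $mx$ based at $v_1$, and since $x \geq 1$ this walk visits $v_2$. At one such occurrence of $v_2$ I insert $y$ consecutive copies of $W$ (each of length $k$) before continuing along $\CC{m}$; when $y=0$ no insertion is made. The result is a closed directed walk of length $mx + ky = \ell$ in $G$, equivalently the image of a homomorphism from $\CC{\ell}$ to $G$, so $G$ contains a homomorphic image of $\CC{\ell}$ — the desired contradiction.

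There is no genuinely hard step here; the only points needing a little care are verifying that the constructed walk has length exactly $\ell$, handling the degenerate case $y=0$, and recalling that closed directed walks of length $\ell$ are exactly the homomorphic images of $\CC{\ell}$, hence forbidden in a $(k,\ell)$-cleared graph.
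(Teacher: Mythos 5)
Your proof is correct and uses essentially the same idea as the paper: combine the $\CC{m}$ with a $\CC{k}$ sharing a vertex and concatenate $x$ traversals of the former with $y$ traversals of the latter to obtain a closed directed walk of length $mx+ky$, which is exactly a homomorphic image of $\CC{mx+ky}$. The only cosmetic difference is that the paper invokes the vertex property of $(k,\ell)$-cleared graphs (every vertex lies on a $\CC{k}$) to find the shared vertex directly, whereas you go through the arc property; both are part of Definition~\ref{def:kl-cleared} and lead to the same construction.
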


\begin{proof}
Since every vertex of a $(k, mx+ky)$-cleared graph occurs in a $\CC{k}$, if the graph contains a $\CC{m}$, it also contains a $\CC{k}$ and a $\CC{m}$ joint by a vertex, which is a forbidden homomorphic image of $\CC{mx+ky}$.
\end{proof}

\subsection{The cycle blow-up case}\label{sec:general_case}

As mentioned in the Introduction, in this subsection we prove the following theorem, providing assumptions for $k$ and $\ell$ under which the value of $\ex(n, \CC{k}, \CC{\ell})$ is attained (up to a lower order error term) in a blow-up of a directed cycle. 

\begin{theorem}\label{thm:general_thm}
Let $k$ and $\ell$ be integers satisfying $k \geq 3$, $\ell \geq 2(k-1)^2$, and $k \nmid \ell$. 
Denote by $d$ the smallest integer greater than $2$ that divides $k$ but does not divide $\ell$. If $2\nmid k$, $2\mid \ell$ or $d \leq 4$, then $\ex(n, \CC{k}, \CC{\ell}) = \frac{n}{k}\left(\frac{n}{d}\right)^{k-1} + o(n^k)$.
\end{theorem}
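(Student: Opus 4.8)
For the lower bound I would take the balanced blow-up $B$ of $\CC{d}$ on $n$ vertices. Since $d\mid k$, wrapping $k/d$ times around the cycle shows that $B$ contains $n\left(\frac{n}{d}\right)^{k-1}(1+o(1))$ homomorphic images of $\CC{k}$, each injective copy being counted $k$ times, hence $\frac{n}{k}\left(\frac{n}{d}\right)^{k-1}+o(n^k)$ copies of $\CC{k}$; every cycle of $B$ has length divisible by $d$, so $B$ contains no $\CC{\ell}$ because $d\nmid\ell$. For the upper bound, I would start from an extremal graph, apply Observation~\ref{homomorphic_img_removal} to delete $o(n^2)$ arcs so that no closed walk of length $\ell$ remains, and then discard every arc and vertex lying in no $\CC{k}$; this yields a $(k,\ell)$-cleared graph $G'$ with $\ex(n,\CC{k},\CC{\ell})-o(n^k)$ copies of $\CC{k}$. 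As every $\CC{k}$ lies inside one strongly connected component of $G'$, it suffices to bound the number of copies of $\CC{k}$ in each strongly connected component $H$, on $m=\abs{V(H)}$ vertices, by $\frac{m}{k}\left(\frac{m}{d}\right)^{k-1}+o(m^k)$: summing over components $H_1,\dots,H_r$ and using $\sum_i m_i^{k}\le\bigl(\sum_i m_i\bigr)^{k}\le n^{k}$ then gives the claim.

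Fix such an $H$ and let $g$ be its period, i.e.\ the greatest common divisor of the lengths of all its cycles; since $\CC{k}\subseteq H$ we have $g\mid k$. If $g\ge d$, then $H$ admits a homomorphism onto $\CC{g}$: colour $V(H)$ by classes $V_0,\dots,V_{g-1}$ according to the length modulo $g$ of a walk from a fixed vertex, so that all arcs run from $V_i$ to $V_{i+1\bmod g}$ — this is well defined precisely because $g$ divides every closed-walk length. Any homomorphism $\CC{k}\to H$ then places $k/g$ of the cycle's vertices in each class, so the number of such homomorphisms is at most $\sum_{i=0}^{g-1}\prod_{j=0}^{k-1}\abs{V_{i+j\bmod g}}=g\prod_{j=0}^{g-1}\abs{V_j}^{k/g}\le g\left(\frac{m}{g}\right)^{k}$ by AM--GM. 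Dividing by $k$ (the automorphisms of $\CC{k}$) and using $g\ge d$ bounds the number of copies of $\CC{k}$ in $H$ by $\frac{m}{k}\left(\frac{m}{g}\right)^{k-1}\le\frac{m}{k}\left(\frac{m}{d}\right)^{k-1}$, as wanted.

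The heart of the argument is the case $g<d$, and the key mechanism is this: if $H$ contains two cycles through a common vertex of lengths $a$ and $b$ with $h:=\gcd(a,b)\mid\ell$ and $\ell/h\ge(a/h-1)(b/h-1)$, then by Corollary~\ref{cor:coin_problem} some concatenation of closed walks at that vertex has length exactly $\ell$, contradicting that $G'$ is $(k,\ell)$-cleared. Every vertex of $H$ lies on a $\CC{k}$, so this applies with $b=k$ to any short cycle of length $a\le 2k-1$ in $H$: then $(k-h)(a-h)/h\le(k-1)(a-1)\le 2(k-1)^2\le\ell$, so the size condition is automatic by the hypothesis $\ell\ge 2(k-1)^2$, and a contradiction follows \emph{unless} $h=\gcd(a,k)\nmid\ell$. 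Now if $3\le g<d$, minimality of $d$ gives $g\mid\ell$, and since $g<k$ not all cycles of $H$ have length divisible by $k$; combining the short-cycle mechanism with the fact that $H$ (being close to its $\CC{g}$-structure if it is to contain many $\CC{k}$) carries a $\CC{g}$, and that $\langle g,k\rangle=g\mathbb{Z}_{\ge 0}$ as $g\mid k$, I would deduce a closed walk of length $\ell$ — ruling this range of $g$ out. The remaining possibility $g\le 2$ is where the three hypotheses are used: when $2\nmid k$ a component of period $2$ contains no $\CC{k}$; when $2\mid\ell$ a short even cycle together with $\CC{k}$ already produces a closed walk of length $\ell$; and when $d\le 4$ one instead bounds directly the number of $\CC{k}$ in an orientation of a bipartite graph (for each alternating cyclic arrangement of $k$ vertices at most $2$ of the $2^{k}$ orientations close into $\CC{k}$), obtaining a count below $\frac{m}{k}\left(\frac{m}{4}\right)^{k-1}\le\frac{m}{k}\left(\frac{m}{d}\right)^{k-1}$.

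The main obstacle I anticipate is the passage connecting ``$H$ has more than $\frac{m}{k}\left(\frac{m}{d}\right)^{k-1}+\varepsilon m^{k}$ copies of $\CC{k}$'' to ``$H$ contains the short cycles needed to run the mechanism above'': a supersaturation step must locate a vertex in $\Omega_\varepsilon(m^{k-1})$ copies of $\CC{k}$ and, from the dense structure around it, extract short cycles whose lengths have small gcd with $k$ — small enough (below $d$, or equal to $1$) to invoke Corollary~\ref{cor:coin_problem}, the only genuine exception being the period-$2$ (bipartite-type) pattern that the hypotheses are designed to exclude. Components that honestly have few copies of $\CC{k}$ (long skeletal cycles with sparse connecting structure) are by contrast easy, since they simply cannot approach the target count.
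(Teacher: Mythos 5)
The approach you propose — strongly connected component decomposition by period $g$, then $\CC{g}$-coloring for $g\ge d$ and forbidding $g<d$ via the coin problem — is genuinely different from the paper's, which applies a weighted-counting argument (Lemma~\ref{lem:ck_optimality}, in the style of Kr\'al'--Norin--Volec) uniformly once it has established that the cleared graph forbids certain \emph{near-cycles}~$\Cinv{m}$ (a directed cycle with a single reversed arc) for $3\le m\le d+1$. Your lower bound and your treatment of components of period $g\ge d$ are correct and pleasantly clean: the homomorphism onto~$\CC{g}$, the class-product bound $g\prod_j|V_j|^{k/g}\le g(m/g)^k$, and the power-mean step $\sum m_i^k\le n^k$ all work.

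The gap is exactly where you flag one: the case $g<d$. The short-cycle mechanism you invoke does not close. First, period $g$ does \emph{not} guarantee a $\CC{g}$ (a component whose cycle lengths are, say, $6$ and $9$ has period $3$ but no $\CC{3}$), so you cannot assume $H$ ``carries a $\CC{g}$.'' Second, the genuine short cycle of length $a\le k$ that you do extract can have $h=\gcd(a,k)$ strictly between $g$ and $d$ with $h\nmid\ell$ (e.g.\ $g=3$, $k=12$, $a=6$, $d=4$: here $g\mid\ell$ but $h=6$ need not divide $\ell$), and then Corollary~\ref{cor:coin_problem} does not apply. This is precisely the obstruction the paper's use of $\Cinv{m}$ is designed to avoid: an $\Cinv{m}$ together with the $\CC{k}$ on its base arc yields a closed walk of length $k+m-2$, whose gcd with $k$ equals $\gcd(k,m-2)$ and is therefore $\le m-2<d$, hence automatically either $\le 2$ (handled by the hypothesis $2\nmid k$ or $2\mid\ell$) or a proper divisor of $k$ below $d$ and hence a divisor of $\ell$ by minimality of $d$. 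Your mechanism, working with genuine $\CC{a}$'s rather than $\Cinv{m}$'s, loses control of this gcd, and no supersaturation step is offered that would recover it. The $g\le 2$ subcase is similarly incomplete: the bipartite bound you invoke for ``$d\le 4$'' corresponds to Lemma~\ref{lem:knn_rand_optimal}, which is stated only for $2\mid k$ and $4\nmid k$ and would need a separate argument when $4\mid k$, and your parenthetical ``at most $2$ of the $2^k$ orientations'' reasoning is not a proof. As written, the proposal establishes the lower bound and the $g\ge d$ half of the upper bound but does not establish the theorem; the case $g<d$, which carries the difficulty, would essentially need to be replaced by something close to the paper's $\Cinv{m}$-exclusion plus Lemma~\ref{lem:ck_optimality} argument.
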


The lower bound is achieved by a balanced blow-up of $\CC{d}$. 
Before we proceed to the proof of the upper bound, we show the following auxiliary lemma. 

\begin{lemma}\label{lem:ck_optimality}
Let $G$ be a graph satisfying the following property: for any copy $C$ of $\CC{k}$, each vertex of~$G$ has at most $\frac{2k}{d}$ neighbors in $C$. Then the number of copies of $\CC{k}$ in $G$ is at most $\frac{n}{k}\left(\frac{n}{d}\right)^{k-1}$.
\end{lemma}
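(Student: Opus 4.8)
\textbf{Proof plan for Lemma~\ref{lem:ck_optimality}.}
The plan is to bound the number of copies of $\CC{k}$ by a counting argument that builds a cycle one vertex at a time, using the neighborhood restriction to control the number of choices at each step. Fix a copy $C$ of $\CC{k}$ in $G$ and think of it as an ordered tuple $(v_1, v_2, \ldots, v_k)$ with arcs $v_1v_2, v_2v_3, \ldots, v_kv_1$; each unordered copy of $\CC{k}$ corresponds to exactly $k$ such tuples, which accounts for the factor $\frac{1}{k}$. I would first choose $v_1$ freely — at most $n$ ways — and then argue that once a directed path $v_1 v_2 \cdots v_j$ along an eventual $\CC{k}$ has been fixed, the next vertex $v_{j+1}$ lies in $N^+(v_j)$, and the hypothesis forces every vertex of $G$ (in particular $v_{j+1}$) to have at most $\frac{2k}{d}$ neighbors on the already-chosen copy. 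The subtlety is that at the moment we pick $v_{j+1}$ we do not yet have a full copy of $\CC{k}$, so the hypothesis must be applied retrospectively: I would set up the count so that we only ever sum over tuples that \emph{do} extend to a genuine $\CC{k}$, and then each partial choice is constrained by the completed cycle.

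More precisely, I would estimate the number of ordered $k$-tuples forming a $\CC{k}$ by the following scheme. Choose $v_1$: at most $n$ options. For the remaining $k-1$ vertices, I want to show the total number of ways to complete is at most $\left(\frac{n}{d}\right)^{k-1}$, which combined with the $n$ for $v_1$ and the $\frac1k$ overcount gives the claimed bound $\frac{n}{k}\left(\frac{n}{d}\right)^{k-1}$. Here is the key idea: partition the $k$ positions $\{1, \ldots, k\}$ of the cycle into $d$ residue classes modulo $d$ (this uses $d \mid k$, so each class has size $k/d$). When we add a new vertex $v_{j+1}$, it is adjacent (via the arc $v_j v_{j+1}$, and later via $v_{j+1} v_{j+2}$) to vertices of $C$; the hypothesis caps the total number of neighbors any vertex has in a completed copy $C$ at $\frac{2k}{d}$. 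Since each vertex of the cycle has exactly $2$ cycle-neighbors, having at most $\frac{2k}{d}$ neighbors in $C$ is a genuine restriction once $\frac{2k}{d} < 2(k-1)$, i.e. always for $d \ge 2$ and $k \ge 3$ except degenerate cases — but the clean way to exploit it is to show the $d$ residue classes behave like the blobs of a blow-up of $\CC{d}$: vertices in the same class are forced to be equal or non-adjacent in the relevant counting, collapsing the number of free coordinates.

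I expect the cleanest execution is a direct averaging/convexity argument rather than the step-by-step build: count pairs (copy $C$ of $\CC{k}$, vertex $u$ with a neighbor in $C$), bound this two ways, and feed the result into a recursive or product estimate. Concretely, I would let $c$ denote the number of copies of $\CC{k}$ and, for each such copy and each of its $k$ vertices $v_i$, count in how many copies $v_i$ can play a given role; the hypothesis that every vertex sees at most $\frac{2k}{d}$ vertices of any fixed copy means that, reversing the roles, each copy $C$ is "seen" by a controlled number of vertices, so the number of copies sharing many vertices with $C$ is small. Summing $\sum_{C} 1$ against this sparsity should telescope to $\frac{n}{k}\left(\frac{n}{d}\right)^{k-1}$ after applying the power-mean inequality to distribute the $n$ vertices as evenly as possible among the $d$ classes.

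The main obstacle, as I see it, is making the bookkeeping honest: the hypothesis is stated for \emph{completed} copies $C$ of $\CC{k}$, so any argument that builds a cycle incrementally must be phrased as a sum over completed tuples with the constraint applied post hoc, and one has to be careful not to double-count or to implicitly assume the partial path is contained in a unique $\CC{k}$. A secondary technical point is verifying that $\frac{2k}{d}$ is exactly the right threshold — that it is tight for the balanced blow-up of $\CC{d}$ (where a vertex in one blob sees the $\frac{k}{d}$ vertices of $C$ in the preceding blob and the $\frac{k}{d}$ in the following blob, totalling $\frac{2k}{d}$) — and that the bound degrades gracefully to give $\left(\frac{n}{d}\right)^{k-1}$ and not something larger when the blow-up is unbalanced; this is where the convexity/power-mean step enters, and I would want to state it as a clean sub-claim before plugging it in.
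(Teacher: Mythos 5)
Your proposal correctly identifies the $\frac{1}{k}$ overcount, the tightness of the threshold $\frac{2k}{d}$ for a balanced blow-up of $\CC{d}$, and the fact that some convexity/averaging must replace a naive ``at most $n/d$ choices per step'' bound; but it stops short of a workable argument, and the specific mechanisms you sketch do not close the gap. The step-by-step count fails because the hypothesis constrains how many neighbors a given vertex has in a \emph{completed} copy $C$ --- it says nothing about the size of $N^+(v_j)$ at the moment you extend the partial path, and there is no way to ``apply the constraint retrospectively'' while still charging each completion to a product of factors of size $n/d$. Likewise, the ``residue classes mod $d$ behave like blobs'' picture is a heuristic for why the extremal example is a blow-up of $\CC{d}$, not a structural fact about $G$: the lemma's hypothesis does not force any copy of $\CC{k}$ to have a blow-up structure, and pushing that intuition would assume more than is given.

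What the paper actually does, and what your proposal is missing, is the Kr\'al'--Norin--Volec weighting scheme. To each of the $2k$ ordered traversals of a copy $\mathcal{D}$ (each of $k$ starting vertices, in each of the two directions) one associates a probabilistic weight $w(D_j)=\frac{1}{n}\prod_{i=1}^{k-1}|A_i(D_j)|^{-1}$ (resp.\ $\overline{w}(D_j)$) arising from a random vertex-by-vertex sampling process; the total weight over all good sequences is at most $1$ in each direction. Two applications of AM--GM convert $\frac{1}{2k}\sum_j (w(D_j)+\overline{w}(D_j))$ into a lower bound involving the \emph{arithmetic} mean of all the set sizes $n_{i,j}, \overline{n}_{i,j}$, and the crucial Claim is that this arithmetic mean is at most $n/d$. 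That claim is exactly where the hypothesis is used: summing the indicator ``vertex $u$ lies in $A_i(D_j)$'' over all $i,j$ (and both directions) counts each vertex $u$ according to how many neighbors it has in $\mathcal{D}$, which is at most $\frac{2k}{d}$. The two-direction trick is not cosmetic --- the last set $A_{k-1}$ is an intersection $N^+(v_{k-2})\cap N^-(v_0)$, and only after pairing forward and backward traversals does the per-vertex contribution become exactly the neighborhood size in $\mathcal{D}$, which is what the hypothesis bounds. Your proposal gestures at ``count pairs (copy, vertex with a neighbor in $C$) two ways'' and ``power-mean to distribute $n$ evenly over $d$ classes,'' which is in the right spirit, but without the weight formulation and the rotation/direction averaging you have no mechanism by which the per-vertex neighborhood bound turns into a per-position bound of $n/d$.
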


\begin{proof}
The proof is based on a~method developed by Kr\'al', Norin, and Volec \cite{kral_norin_volec}. We say that a sequence $(v_i)_{i=0}^{k-1}$ of $k$ vertices is \emph{good} if $v_0 v_1\ldots v_{k-1}$ is a directed cycle in $G$. Note that there are $k$ different good sequences corresponding to a single $\CC{k}$.

For a~fixed good sequence $D = (v_i)_{i=0}^{k-1}$, we define the following sets:
\begin{align*}
A_0(D) & = V(G),\\
A_i(D) & = N^+(v_{i-1}) \textrm{ for } 1 \leq i \leq k-2,\\
A_{k-1}(D) & = N^+(v_{k-2}) \cap N^-(v_0).
\end{align*}

Define the~\emph{weight} $w(D)$ of a~good sequence $D$ as
\begin{align*}
w(D) = \prod_{i=0}^{k-1} \abs{A_i(D)}^{-1} = \frac 1n \prod_{i=1}^{k-1} \abs{A_i(D)}^{-1}.
\end{align*} 

This quantity has the following probabilistic interpretation. Suppose we want to sample $k$ vertices $w_0, \ldots, w_{k-1}$ as follows. We start with choosing $w_0$ uniformly at random from the set of all vertices of $G$. Next, we pick uniformly at random some out-neighbor of $w_0$ to be $w_1$. In general, $w_j$ is a~random vertex from the set $A_j((w_i)_{i=0}^{j-1})$. (Note that the definition of $A_j(D)$ depends only on the first $j$ elements of a~sequence~$D$.) Then, $w(D)$ is just the probability that the sequence $(w_i)_{i=0}^{k-1}$ obtained in this random process is equal to $D$.

In particular, the sum of the weights of all good sequences is at most one, since it is the sum of probabilities of pairwise disjoint events.

We similarly define sets $B_i(D)$ when vertices of a good sequence $D = (v_i)_{i=0}^{k-1}$ are sampled in the reversed order $v_0,v_{k-1},\ldots, v_1$:
\begin{align*}
B_0(D) & = V(G),\\
B_{1}(D) & = N^-(v_{0}),\\
B_{i}(D) & = N^-(v_{k-i+1}) \textrm{ for } 2 \leq i \leq k-2,\\
B_{k-1}(D) & = N^-(v_{2}) \cap N^+(v_0).
\end{align*}

Similarly, we define the corresponding weight 
\begin{align*}
\overline{w}(D) = \prod_{i=0}^{k-1} \abs{B_i(D)}^{-1} = \frac 1n \prod_{i=1}^{k-1} \abs{B_i(D)}^{-1}.
\end{align*} 

For a copy $\mathcal{D}$ of $\CC{k}$ in $G$ on vertices $v_0, \ldots, v_{k-1}$, we define corresponding good sequences $D_j = (v_{j}, v_{j+1}, \ldots, v_{j+k-1})$ for $j = 0,1,\ldots,k-1$, where the indices are taken modulo~$k$.

Denoting $n_{i,j} = \abs{A_i(D_j)}$ and $\overline{n}_{i,j} = \abs{B_i(D_j)}$ we have the following inequalities
\begin{align*}
    \frac{1}{2k} \sum_{j=0}^{k-1} (w(D_j) + \overline{w}(D_j)) & \geq \left(\prod_{j=0}^{k-1}w(D_j)\overline{w}(D_j)\right)^{1/2k} \\
    & = \frac 1n\left(\prod_{j=0}^{k-1}\prod_{i=1}^{k-1} n_{i,j}\overline{n}_{i,j}\right)^{-1/2k} \\
    & \geq \frac 1n\left(\frac{1}{2k(k-1)}\sum_{j=0}^{k-1}\sum_{i=1}^{k-1} \left(n_{i,j} + \overline{n}_{i,j}\right)\right)^{-(k-1)}.
\end{align*}
\begin{clm} \label{clm:vertex_contribution}
We have
\begin{align*}
    \sum_{j=0}^{k-1}\sum_{i=1}^{k-1} \left(n_{i,j} + \overline{n}_{i,j}\right) \leq 2k(k-1)\frac{n}{d}.
\end{align*}
\end{clm}
\begin{proof}
    For any vertex $u \in V(G)$, let $n_{i,j}(u) = 1$ if $u \in A_i(D_j)$ and $n_{i,j}(u) = 0$ otherwise. Similarly, let $\overline{n}_{i,j}(u) = 1$ if $u \in B_i(D_j)$ and $\overline{n}_{i,j}(u) = 0$ otherwise. Then,
 \begin{align*}
    \sum_{j=0}^{k-1}\sum_{i=1}^{k-1} \left(n_{i,j} + \overline{n}_{i,j}\right) = \sum_{u \in V(G)}\sum_{i=1}^{k-1}\sum_{j=0}^{k-1} \left(n_{i,j}(u) + \overline{n}_{i,j}(u)\right).
\end{align*}

For any fixed $1 \leq i < k-1$, the sum $\sum_{j=0}^{k-1}(n_{i,j}(u) + \overline{n}_{i,j}(u))$ is equal to the number of neighbors of $u$ in $\mathcal{D}$, hence is at most $\frac{2k}{d}$. Also, $\sum_{j=0}^{k-1} n_{k-1, j}(u)$ is not greater than the number of in-neighbors of $u$ in $\mathcal{D}$, and $\sum_{j=0}^{k-1} \overline{n}_{k-1, j}(u)$ is not greater than the number of out-neighbors of $u$ in $\mathcal{D}$, hence 
\[\sum_{j=0}^{k-1} (n_{k-1, j}(u) + \overline{n}_{k-1,j}(u)) \leq \frac{2k}{d}\]
as well. By summing over all choices of $u$ we get the desired inequality.
\end{proof}

Using Claim \ref{clm:vertex_contribution} above we obtain
\begin{align*}
    \frac{1}{2k} \sum_{j=0}^{k-1} (w(D_j) + \overline{w}(D_j)) & \geq \frac 1n \left( \frac{n}{d} \right)^{-(k-1)}
\end{align*}

Recall that $\sum_\mathcal{D}\sum_{j=0}^{k-1} w(D_j) \leq 1$ and $\sum_\mathcal{D}\sum_{j=0}^{k-1} \overline{w}(D_j) \leq 1$, where the first summation is over all copies of $\CC{k}$ in $G$. Denoting by $M$ the number of copies of $\CC{k}$ in $G$, by summing over all copies of $\CC{k}$ in $G$ we obtain
\begin{align*}
    \frac 1{k} \geq \frac 1n \left( \frac nd \right)^{-(k-1)} M.
\end{align*}
Hence
\begin{align*}
    M \leq \frac nk \cdot \left( \frac nd \right)^{k-1}
\end{align*}
as wanted.
\end{proof}

\begin{proof}[Proof of Theorem~\ref{thm:general_thm}]
Let $G$ be a $(k, \ell)$-cleared graph obtained from an extremal graph by losing $o(n^k)$ copies of $\CC{k}$ as explained in the beginning of Section~\ref{sec:dense_case}. To prove Theorem~\ref{thm:general_thm} it is enough to show that its assumptions imply that $G$ satisfies the property stated in Lemma~\ref{lem:ck_optimality}. We consider three separate cases depending on the assumed properties of $k$, $\ell$ and $d$. 

\paragraph{Assumption $2\nmid k$ or $2\mid \ell$.}
We start with showing that for any integer $3\leq m \leq d+1$, graph~$G$ does not contain a $\Cinv{m}$, that is, a cycle $\CC{m}$ with one arc oriented in the reverse direction. Assuming otherwise, there is an arc $xy$ and a directed path of length $m-1$ from $x$ to $y$. Since arc $xy$ is contained in $\CC{k}$, $G$ contains intersecting homomorphic images of $\CC{k}$ and $\CC{k+m-2}$. Consequently, a homomorphic image of a directed cycle of length $(k+m-2)x + ky$ for any integers $x, y \geq 0$. 
This will include a forbidden homomorphic image of $\CC{\ell}$ by Corollary~\ref{cor:coin_problem} if we show that $\ell$ is divisible by $g = \gcd(k,k+m-2)$ and large enough. If $g=1$, then trivially $g \mid \ell$. If $g=2$, then $2 \mid k$, so from our assumption $2\mid \ell$, as needed. Finally, if $g \geq 3$, then since $$g = \gcd(k+m-2, k) \leq m-2 < d,$$ from the definition of $d$ we get that $\ell$ is divisible by $g$. Therefore, since
\[\left(\frac{k + m - 2}{g} - 1\right)\left(\frac{k}{g} - 1\right) \leq \frac{1}{g^2}(k + m - 2 - 1)(k - 1) \leq \frac{1}{g}(2k-2)(k-1) \leq \frac{\ell}{g},\]
by Corollary~\ref{cor:coin_problem} we obtain that $G$ contains a homomorphic image of $\CC{\ell}$, which is a contradiction.

Now consider any directed path $P$ on at most $d$ vertices in $G$. If any vertex in $V(G) \setminus P$ has at least three neighbors in $P$, then it has at least two in- or out-neighbors in $P$, which creates a forbidden graph $\Cinv{m}$ with $3\leq m \leq d+1$. Thus, every vertex in $V(G) \setminus P$ has at most $2$ neighbors in $P$.
This means that for any directed cycle $C$ of length $k$, each vertex $v \in V(G)$ has at most $\frac{2k}{d}$ neighbors in $C$, because $C$ can be split into $\frac{k}{d}$ directed paths on $d$ vertices (or, if $v \in C$, into $\frac{k}{d}-1$ directed paths on $d$ vertices and one path on $d-1$ vertices).
Lemma \ref{lem:ck_optimality} implies the desired upper bound $\frac{n}{k}\left(\frac{n}{d}\right)^{k-1}$ on the number of copies of $\CC{k}$ in $G$.

\paragraph{Assumption $d = 3$.} 
We proceed similarly as in the previous case, by showing that $G$ does not contain any $\Cinv{3}$. Assuming otherwise, $G$ contains intersecting homomorphic images of $\CC{k}$ and $\CC{k+1}$, consequently a homomorphic image of a directed cycle of length $(k+1)x + ky$ for any integers $x, y \geq 0$. Since $k$ and $k + 1$ are relatively prime and $\ell \geq k(k-1)$, by Corollary~\ref{cor:coin_problem}, $G$~contains a homomorphic image of $\CC{\ell}$, which is a contradiction.

Similarly as in the previous case, we notice now that no vertex has more than $2$ neighbors on a directed path on at most $3$ vertices, because otherwise it creates a $\Cinv{3}$. Therefore, each vertex has at most $\frac{2k}{3}$ neighbors on any $\CC{k}$ in $G$, and by Lemma~\ref{lem:ck_optimality} we obtain the desired bound.

\paragraph{Assumption $d = 4$.}
The same argument as in the previous case shows that $G$ does not contain a $\Cinv{3}$. We will argue that $G$ also does not contain any $\CC{3}$. Assuming otherwise, $G$ contains intersecting $\CC{3}$ and $\CC{k}$, so consequently a homomorphic image of $\CC{3x + ky}$ for any integers $x, y \geq 0$. Since $d > 3$, either $3\nmid k$ or $3\mid \ell$, which means that $\gcd (3, k) \mid \ell$. Therefore, since $\ell > 2(k-1)$, by Corollary~\ref{cor:coin_problem} we obtain that $\ell$ is indeed of the form $3x + ky$, which is a contradiction. 

Knowing that $G$ does not contain any $\CC{3}$ or $\Cinv{3}$, it is trivial to observe that any vertex in $G$ has at most $\frac{2k}{d}=\frac{k}{2}$ neighbors in any $\CC{k}$ in $G$. This fact allows to apply Lemma~\ref{lem:ck_optimality} and obtain the wanted bound on the number of copies of $\CC{k}$ in $G$.
\end{proof}

\subsection{The random bipartite case}\label{sec:random_bipartite_case}

In this subsection, we prove that for large enough $\ell$ not divisible by $k$ in all cases not covered by Theorem~\ref{thm:general_thm} the value of $\ex(n,\CC{k},\CC{\ell})$ is attained (up to a lower order error term) in a random orientation of the complete balanced bipartite graph. This means that we assume that $k$ is even, $\ell$ is odd, and the smallest integer $d>2$ that divides $k$ but does not divide $\ell$ is equal to at least~$5$. The last condition is equivalent to saying that $4 \nmid k$ (to exclude $d=4$) and $3 \nmid k$ or $3 \mid \ell$ (to exclude $d=3$). Therefore, we will prove the following theorem. 

\begin{theorem} \label{thm:rand_thm}
    Let $k$ and $\ell$ be integers satisfying $k \geq 3$, $\ell > 33k^2$, and $k \nmid \ell$. If $2 \mid k$, $4 \nmid k$, $2\nmid \ell$, and $3 \nmid k$ or $3\mid \ell$, then $\ex(n, \CC{k}, \CC{\ell}) = \frac{2}{k}\left(\frac{n}{4}\right)^k + o(n^k)$.
\end{theorem}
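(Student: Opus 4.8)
The plan is to prove the two bounds separately. For the lower bound, fix a partition $V=A\cup B$ with $\bigl||A|-|B|\bigr|\le 1$ and take a uniformly random orientation $G_0$ of the complete bipartite graph between $A$ and $B$, keeping for each pair $a\in A$, $b\in B$ exactly one of $ab$, $ba$, independently with probability $\tfrac12$ each. Then $G_0$ is bipartite, so it contains no odd cycle, in particular no $\CC{\ell}$. A homomorphism from $\CC{k}$ to $G_0$ must alternate between $A$ and $B$ (which is possible precisely because $2\mid k$), so it is obtained by choosing one of the two alternating colour patterns, then $k$ vertices, and then all $k$ required arcs must be present; hence $\mathbb{E}\bigl[\#\mathrm{hom}(\CC{k},G_0)\bigr]=2\,(|A|\,|B|)^{k/2}2^{-k}+o(n^{k})=2\,(n/4)^{k}+o(n^{k})$. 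Since only an $o(n^{k})$ proportion of homomorphisms fail to be injective, $\mathbb{E}\bigl[\#\CC{k}\text{ in }G_0\bigr]=\tfrac{2}{k}(n/4)^{k}+o(n^{k})$, and some orientation of $\overrightarrow{K}_{|A|,|B|}$ realizes at least this many copies.

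For the upper bound, let $G$ be a $(k,\ell)$-cleared graph obtained from an extremal graph as in the beginning of Section~\ref{sec:dense_case}; it suffices to bound the number of copies of $\CC{k}$ in $G$ by $\tfrac{2}{k}(n/4)^{k}+o(n^{k})$. First I would show that $G$ is triangle-free. It has no $\CC{3}$: a $\CC{3}$ meeting a $\CC{k}$ (which it must, since every vertex of $G$ lies on a $\CC{k}$) yields homomorphic images of $\CC{3x+ky}$ for all $x,y\ge 0$, and $\gcd(3,k)\mid \ell$ — trivially if $3\nmid k$, and because $3\mid\ell$ by hypothesis if $3\mid k$ — so, as $\ell>33k^{2}$ exceeds the threshold of Corollary~\ref{cor:coin_problem}, this forces a forbidden homomorphic image of $\CC{\ell}$. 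It has no transitive triangle: the three directed $(k-1)$-paths between its arc-endpoints supplied by $(k,\ell)$-clearedness produce intersecting homomorphic images of $\CC{k}$ and $\CC{k+1}$, hence of $\CC{(k+1)x+ky}$ for all $x,y\ge 0$, and $\gcd(k,k+1)=1$ with $\ell>k(k-1)$ again forces $\CC{\ell}$. Having neither a $\CC{3}$ nor a transitive triangle, for every arc $v_iv_{i+1}$ of a copy $C$ of $\CC{k}$ each vertex of $G$ is adjacent to at most one of $v_i,v_{i+1}$ (each of the four possibilities spans a $\CC{3}$ or a transitive triangle), so, splitting $C$ into $k/2$ arcs, every vertex of $G$ has at most $k/2$ neighbours on $C$. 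Applying Lemma~\ref{lem:ck_optimality} with $d=4$ then already gives $\tfrac nk(n/4)^{k-1}=2\cdot\tfrac2k(n/4)^{k}$, exactly twice the target.

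The crux is therefore to save this factor $2$, and I would do this by rerunning the weight method of Lemma~\ref{lem:ck_optimality} while treating the ``closing'' sets separately. In the notation of that proof, for a copy $\mathcal D$ of $\CC{k}$ the closing sets are $A_{k-1}(D_j)=N^+(v_{j-2})\cap N^-(v_j)$ and $B_{k-1}(D_j)=N^-(v_{j+2})\cap N^+(v_j)$; writing $I(u),O(u)$ for the sets of positions of in- and out-neighbours of a vertex $u$ on $\mathcal D$ one checks $\sum_j n_{k-1,j}(u)=\sum_j\overline n_{k-1,j}(u)=\bigl|O(u)\cap(I(u)+2)\bigr|$, and the key point is that this quantity is typically much smaller than $\min(|I(u)|,|O(u)|)$ unless $\mathcal D$ is highly structured. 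Keeping these closing factors out of the averaging used in Lemma~\ref{lem:ck_optimality} and estimating them this way, the geometric mean of the sizes $|A_i(D_j)|,|B_i(D_j)|$ entering the bound drops from $n/4$ to roughly $2^{-1/(k-1)}\cdot\tfrac n4$, which is exactly the improvement carrying $\tfrac nk(n/4)^{k-1}$ to $\tfrac2k(n/4)^{k}$. The hypotheses $2\mid k$ and $4\nmid k$ enter here to exclude the structured configurations where the saving fails — most notably orientations behaving like a blow-up of $\CC{4}$, which contribute no $\CC{k}$ at all when $4\nmid k$.

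The part I expect to be hardest is making this last step robust. A priori $G$ need not be bipartite: if a prime $p\ge 5$ divides $k$ but not $\ell$ then the coin-problem argument does not forbid a $\CC{p}$, and hence not a blow-up of $\CC{p}$. So one must either first show, using Corollary~\ref{cor:coin_problem} to kill all short odd closed walks and all $\CC{m}$ with $\gcd(k,m-2)\mid\ell$, that $G$ splits off a bipartite part up to deleting $o(n^{2})$ arcs, or prove a sharp version of Lemma~\ref{lem:ck_optimality} valid for every triangle-free, $(k,\ell)$-cleared $G$ — certifying that the balanced random bipartite orientation is extremal — and then check that any residual blow-up-of-$\CC{j}$ structure ($j\ge5$ odd, $j\mid k$, $j\nmid\ell$) is dominated, since $\tfrac{j}{k}(n/j)^{k}=o\bigl(\tfrac{2}{k}(n/4)^{k}\bigr)$ whenever $j\ge 5$ (and $\ell$, hence $k$, is large). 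The combination of the refined AM--GM bookkeeping in the weight argument with this structural control is the main obstacle.
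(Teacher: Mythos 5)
Your lower bound and the initial structural observations (no $\CC{3}$, no transitive triangle, hence each vertex has at most $k/2$ neighbours on any copy of $\CC{k}$) are correct and agree with the paper. But the upper bound has a genuine gap at exactly the step you flag as ``the crux.'' Your plan to save the factor $2$ by re-running Lemma~\ref{lem:ck_optimality} with the closing sets $A_{k-1}(D_j)$, $B_{k-1}(D_j)$ handled separately is never made into an argument: the assertion that $\bigl|O(u)\cap(I(u)+2)\bigr|$ is ``typically much smaller'' and drops the relevant geometric mean by $2^{-1/(k-1)}$ is stated without proof, and you yourself concede that ``making this last step robust'' is the main obstacle. You also correctly observe that $G$ need not \emph{a priori} be bipartite, since a cycle $\CC{p}$ for a prime $p\ge 5$ with $p\mid k$, $p\nmid\ell$ is not killed by the coin-problem argument — but you only sketch two possible ways around this, without carrying either one out.

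The paper resolves both issues, by a different and more structured route. First (Lemma~\ref{lem:bipartite_optimal}) it proves that the extremal $(k,\ell)$-cleared $G$ is bipartite: a vertex-replacement argument shows all $t_v$ are within $o(n^{k-1})$ of the average, so every vertex lies on many $\CC{k}$; then, by showing that the sets $N^+_i(v)$ are pairwise disjoint for indices within $4$ of each other (this is where $\ell>33k^2$ and the $3\nmid k$ or $3\mid\ell$ hypothesis enter, via Corollary~\ref{cor:coin_problem} applied to three cycle lengths), it rules out all odd cycles of length up to $23$; a Cauchy--Schwarz path-count bound $p_{2j}\le n(n/4)^{2j-1}$ then forces $\delta(\widetilde G)>\tfrac{2}{23}n$, and the Andr\'asfai--Erd\H os--S\'os theorem concludes bipartiteness. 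Second (Lemma~\ref{lem:knn_rand_optimal}), for an orientation of a complete bipartite graph the count of $\CC{k}$ is $\tfrac1k\mathrm{tr}(M^k)$; the eigenvalues of the adjacency matrix come in pairs $\pm\lambda$ and conjugate pairs, Ky Fan's inequality bounds $\sum_{i\le n/2}\mathrm{Re}\,\lambda_i$ by $\tfrac12\sqrt{m(n-m)}$, and the pointwise inequality $\mathrm{Re}\,z^k\le k|z|^{k-1}\mathrm{Re}\,z$ (valid because $k\equiv 2\pmod 4$) gives $\mathrm{tr}(M^k)\le 2(n/4)^k$. Your proposal does not anticipate either the bipartiteness machinery or the spectral bound, and the refined weight-method estimate you suggest in their place is missing.
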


The theorem is immediately implied by the following two lemmas. 

\begin{lemma} \label{lem:bipartite_optimal}
    Let $k$ and $\ell$ be integers satisfying $k \geq 3$, $\ell > 33k^2$, and $k \nmid \ell$. If $2 \mid k$, $4 \nmid k$, $2\nmid \ell$, and $3 \nmid k$ or $3\mid \ell$, then $\ex(n, \CC{k}, \CC{\ell})$ is achieved (up to a lower order error term) in an orientation of a complete bipartite graph.
\end{lemma}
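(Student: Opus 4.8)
The plan is to show that, after an appropriate cleaning, the extremal graph is bipartite; this suffices because $\ell$ is odd. Let $G$ be an $n$-vertex $\CC{\ell}$-free graph with the maximum number of copies of $\CC{k}$. Apply Observation~\ref{homomorphic_img_removal}, then iteratively delete every arc contained in fewer than $c\,n^{k-3}$ copies of $\CC{k}$ for a suitable constant $c=c(k,\ell)$ (as in the proof of Theorem~\ref{thm:order_of_magnitude}), and finally discard all arcs and vertices lying in no $\CC{k}$. Since $\ex(n,\CC{k},\CC{\ell})=\Theta(n^{k})$ by Theorem~\ref{thm:order_of_magnitude}, this destroys only $o(n^{k})$ copies of $\CC{k}$ and leaves a nonempty $(k,\ell)$-cleared graph $G$ in which every arc lies in $\Omega(n^{k-3})$ copies of $\CC{k}$. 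If we show that, after deleting a further $o(n^{2})$ arcs, $G$ is bipartite with parts $A,B$, then any orientation $H$ of the complete bipartite graph on $A\cup B$ containing $G$ has at least as many copies of $\CC{k}$ and, being bipartite with $\ell$ odd, is still $\CC{\ell}$-free; with the trivial lower bound this yields that $\ex(n,\CC{k},\CC{\ell})$ equals, up to $o(n^{k})$, the maximum number of copies of $\CC{k}$ over $n$-vertex orientations of complete bipartite graphs, as claimed.

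To prove bipartiteness I first list the forbidden small configurations, exactly as in the ``$2\nmid k$ or $2\mid\ell$'' paragraph of the proof of Theorem~\ref{thm:general_thm}. A copy of $\Cinv{m}$ (respectively $\CC{m}$) together with a $\CC{k}$ through one of its arcs yields homomorphic images of $\CC{(k+m-2)x+ky}$ (respectively $\CC{mx+ky}$) for all $x\ge1$, $y\ge0$, hence by Corollary~\ref{cor:coin_problem} a homomorphic image of $\CC{\ell}$ --- a contradiction --- provided $g:=\gcd(k,k+m-2)$ (respectively $\gcd(k,m)$) divides $\ell$ and $\ell$ exceeds the corresponding Frobenius bound. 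For odd $m$ the number $g$ is odd, and $g\mid\ell$ holds for free when $g=1$, by the assumption ``$3\nmid k$ or $3\mid\ell$'' when $g=3$, and by minimality of $d$ when $3<g<d$, while $g\ge d$ forces $m$ large; since $\ell>33k^{2}$ this excludes $\Cinv{m}$ and $\CC{m}$ for all odd $m$ up to roughly $\ell/(k-1)$, in particular for all odd $m\le d+1$. As $4\nmid k$, the only small configurations that can survive are the cycles $\Cinv{m}$ with $m$ even and $4\le m\le d+1$ (for them $g=2$, and $2\mid\ell$ fails).

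Now I would distinguish two cases. If $G$ contains no $\Cinv{m}$ with $3\le m\le d+1$, the computation from the proof of Theorem~\ref{thm:general_thm} applies verbatim: every vertex has at most $\tfrac{2k}{d}$ neighbours in any copy of $\CC{k}$, so by Lemma~\ref{lem:ck_optimality} the number of copies of $\CC{k}$ in $G$ is at most $\tfrac{n}{k}\bigl(\tfrac{n}{d}\bigr)^{k-1}$, which, since $d\ge5$, is at most the number of copies of $\CC{k}$ in a balanced random orientation of $K_{n/2,n/2}$; thus $\ex(n,\CC{k},\CC{\ell})$ is already bounded by the complete bipartite maximum. In the remaining case $G$ contains $\Cinv{m_{0}}$ for some even $m_{0}\in\{4,6,\dots,d+1\}$, hence homomorphic images of $\CC{k+2}$, and therefore of $\CC{2j}$ for all large $j$. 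I then claim $G$ has no odd closed directed walk: given an undirected odd cycle $C$ in $G$, replacing each backward arc of $C$ by a directed path of length $k-1$ through a $\CC{k}$ (supplied, disjoint from the rest of $C$, by thickness) produces a homomorphic image of $\CC{r}$ with $r$ odd, and since $\gcd(r,k,k+2)$ divides $\gcd(2,r)=1$, gluing copies of $\CC{r}$, $\CC{k}$, $\CC{k+2}$ realises every sufficiently large integer, in particular $\ell$ (the parity of $\ell$ forces the copy of $\CC{r}$ to be used) --- a contradiction. Here $\ell>33k^{2}$ keeps the relevant Frobenius bound below $\ell$ as long as the shortest odd cycle of $G$ is not too long, which I would again deduce from thickness by rerouting. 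Hence $G$ is bipartite, and the first paragraph concludes the proof.

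The main obstacle is this last case: $\CC{\ell}$-freeness by itself does \emph{not} imply bipartiteness --- a balanced blow-up of $\CC{d}$ is a legitimate $\CC{\ell}$-free graph far from bipartite, and is in fact the extremal construction when $2\nmid k$ or $2\mid\ell$ --- so extremality, via the $\Omega(n^{k-3})$-thickness of every arc, must be used in an essential way to exclude all ``mixed'' behaviour once the permissible even reversed cycle $\Cinv{m_{0}}$ appears. Bounding the length of a shortest odd cycle (and, more generally, the distances between the configurations that one wants to glue), and accounting for the $o(n^{2})$ arcs discarded along the way, is where the real work lies; the explicit constant $33$ is presumably tuned so that every application of Corollary~\ref{cor:coin_problem} in this argument goes through at once.
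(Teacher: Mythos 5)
Your outline follows the paper's high-level strategy (show that an extremal cleared graph must be bipartite, then complete it to an orientation of a complete bipartite graph), and your catalogue of which small configurations $\Cinv{m}$, $\CC{m}$ are excludable by Frobenius-type arguments is essentially right, including the correct observation that the even reversed cycles $\Cinv{m}$ are the ones that survive because $\gcd(k,k+m-2)=2 \nmid \ell$. However, the proof has a genuine gap at exactly the place you flag yourself: ruling out \emph{long} odd cycles. Your gluing argument — combine a $\CC{k}$, a $\CC{k+2}$ (from $\Cinv{m_0}$), and a $\CC{r}$ with $r$ odd obtained by rewiring an odd cycle — only contradicts $\CC{\ell}$-freeness when the Frobenius threshold for $\{k,\,k+2,\,r\}$ is below $\ell$, and that threshold grows with $r$. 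Nothing in your argument bounds the length $r$ of a shortest odd cycle; "deduce from thickness by rerouting" is not an argument, and arc-thickness alone does not obviously shorten odd cycles (replacing an arc by a length-$(k-1)$ path changes the length by an even amount, so parity is preserved but the length grows). There is a second, related gap: the Frobenius gluing requires the three closed walks to share a vertex, and you never arrange this; the $\Cinv{m_0}$ and the odd cycle could a priori live in disjoint parts of $G$.

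The paper avoids both problems with a different mechanism. From extremality (a vertex-swap argument, not arc-thickness) it deduces that every vertex lies on at least $0.45(n/4)^{k-1}$ copies of $\CC{k}$. This per-vertex lower bound is then used twice: first, a vertex $v$ on a short odd cycle has all the needed walks passing through $v$, and a careful count of $|N_1^+(v)|\cdots|N_{k-1}^+(v)|$ (using that $N_i^+(v)\cap N_j^+(v)=\emptyset$ for $|i-j|<5$) shows $t_v$ is too small, forbidding odd cycles of length up to $23$; second, a Cauchy--Schwarz bound on the number of paths $\PP{2j}$ shows the minimum degree of the underlying undirected graph exceeds $\frac{2}{23}n$. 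The Andr\'asfai--Erd\H{o}s--S\'os theorem then converts "no odd cycle of length $\le 23$ and $\delta>\frac{2}{23}n$" into global bipartiteness — this is what replaces your missing bound on the odd girth. Your Case 1 (no $\Cinv{m}$ at all for $3\le m\le d+1$, fall back to Lemma~\ref{lem:ck_optimality}) is a fine observation, but the paper does not need the case split because its degree/odd-girth route handles both cases uniformly. To repair your proof you would need, at minimum, a quantitative bound on the odd girth of $G$ and a way to force the relevant closed walks through a common vertex; the per-vertex count $t_v$ and the Andr\'asfai--Erd\H{o}s--S\'os theorem are precisely the tools that supply these, and neither appears in your proposal.
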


\begin{lemma} \label{lem:knn_rand_optimal}
    Let $G$ be an orientation of a complete bipartite graph on $n$ vertices. If $2 \mid k$ and $4 \nmid k$, then $G$ contains at most $\frac{2}{k}\left(\frac{n}{4}\right)^k$ copies of $\CC{k}$.
\end{lemma}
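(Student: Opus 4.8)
The plan is to reduce the statement to a spectral estimate and then exploit the parity of $k/2$. Write $k=2m$; since $2\mid k$ but $4\nmid k$, the integer $m$ is odd, and this is the crucial feature. Let $A,B$ be the parts of $G$, put $p=|A|$, $q=|B|$, $p+q=n$, and let $P,Q\in\{0,1\}^{A\times B}$ be the orientation matrices with $P_{xy}=1$ iff $x\to y$, so that $P+Q=J$ (the all-ones $p\times q$ matrix). Since each copy of $\CC{k}$ in $G$ corresponds to exactly $k$ injective homomorphisms $\CC{k}\to G$, it suffices to prove that the number of homomorphisms $\CC{k}\to G$ is at most $2\left(\tfrac n4\right)^{k}$; I will in fact bound it by $2\left(\tfrac{pq}{4}\right)^{m}$, which is at most $2\left(\tfrac n4\right)^{2m}=2\left(\tfrac n4\right)^{k}$ by $pq\le n^{2}/4$. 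Because $G$ is bipartite, every homomorphic image of $\CC{k}$ alternates between $A$ and $B$, and grouping the arcs two at a time identifies the number of homomorphisms $\CC{k}\to G$ starting in $A$ with $\operatorname{tr}\big(R^{m}\big)$, where $R:=PQ^{T}$ (so $R_{xx'}$ counts directed $2$-paths $x\to\cdot\to x'$ with $x,x'\in A$); by the cyclic invariance of the trace the number of those starting in $B$ equals $\operatorname{tr}\big((Q^{T}P)^{m}\big)=\operatorname{tr}\big(R^{m}\big)$ as well. Hence the target reduces to
\[
\operatorname{tr}\big(R^{m}\big)\ \le\ \left(\tfrac{pq}{4}\right)^{m}.
\]

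The key algebraic input is the identity $4R=qJ-K+N$, valid for an arbitrary orientation of a complete bipartite graph: here $K:=(2P-J)(2P-J)^{T}\succeq0$ with $\operatorname{tr}K=\|2P-J\|_{F}^{2}=pq$, and $N:=\boldsymbol\delta\mathbf 1^{T}-\mathbf 1\boldsymbol\delta^{T}$ is antisymmetric of rank at most $2$, where $\boldsymbol\delta$ is the vector $(d^{+}(x)-d^{-}(x))_{x\in A}$; one checks this by substituting $Q=J-P$ and collecting terms. Set $S:=qJ-K$, so $4R=S+N$ with $S$ symmetric and $\operatorname{tr}S=qp-pq=0$. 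The symmetric part is handled by an eigenvalue argument that genuinely uses that $m$ is odd: since $K\succeq0$, Weyl's inequality applied to $S=qJ+(-K)$ gives $\lambda_{i}(S)\le\lambda_{i}(qJ)$ for all $i$, and as $qJ$ has eigenvalues $qp,0,\dots,0$ this yields $\lambda_{1}(S)\le qp$ and $\lambda_{i}(S)\le0$ for $i\ge2$; combined with $\operatorname{tr}S=0$ it forces $0\le\lambda_{1}(S)\le qp$. Therefore, $m$ being odd,
\[
\operatorname{tr}\big(S^{m}\big)=\lambda_{1}(S)^{m}+\sum_{i\ge2}\lambda_{i}(S)^{m}\ \le\ \lambda_{1}(S)^{m}\ \le\ (qp)^{m}=(pq)^{m},
\]
and dividing by $4^{m}$ this already proves the Lemma in the case $N=0$. (The balanced blow-up of $\CC{4}$ realizes $S$ with eigenvalues $qp$ and $-qp$ and has $N=0$, and for even $m$ these give $\operatorname{tr}(S^{m})=2(qp)^{m}$ rather than the bound $(qp)^{m}$ — which is precisely why the hypothesis $4\nmid k$ cannot be dropped.)

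The remaining task, which I expect to be the main obstacle, is to show $\operatorname{tr}\big((S+N)^{m}\big)\le(pq)^{m}$, i.e.\ to control the rank-$\le2$ antisymmetric perturbation $N$. Expanding $\operatorname{tr}\big((S+N)^{m}\big)$ by the number of factors equal to $N$: the $N$-free part is bounded above, and every word containing exactly one $N$ contributes $\operatorname{tr}\big(S^{m-1}N\big)=\operatorname{tr}\big((S^{m-1}N)^{T}\big)=-\operatorname{tr}\big(S^{m-1}N\big)=0$. For the words with at least two $N$'s I would use the explicit shape $N=\mathbf u\mathbf v^{T}-\mathbf v\mathbf u^{T}$ with $\mathbf u=\boldsymbol\delta$, $\mathbf v=\mathbf 1$ — for instance, applying the Sherman--Morrison--Woodbury identity to the resolvent $(zI-S-N)^{-1}$ together with $\operatorname{tr}\big((S+N)^{m}\big)=\tfrac1{2\pi i}\oint z^{m}\operatorname{tr}\big((zI-S-N)^{-1}\big)\,dz$, so that the entire $N$-dependent part collapses to a contour integral of $z^{m}$ against a rational expression in the three scalar functions $\mathbf 1^{T}(zI-S)^{-1}\mathbf 1$, $\boldsymbol\delta^{T}(zI-S)^{-1}\mathbf 1$, $\boldsymbol\delta^{T}(zI-S)^{-1}\boldsymbol\delta$ — and then one shows this expression is nonpositive, using once more that $S$ has at most one positive eigenvalue and that $m$ is odd (the sign alternation in the powers of $N$ is what makes the relevant geometric-type sum negative). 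The delicate point is that a naive operator-norm bound on the $N$-terms is far too lossy — already a single block of four consecutive $N$'s can be of order $(pq)^{m}$ — so the cancellation coming from the low rank of $N$ together with the oddness of $m$ is essential, and carrying this estimate out uniformly over all orientations of $K_{p,q}$ is where the real work lies. (An alternative route, avoiding the spectral machinery, is to refine the weighting scheme of Lemma~\ref{lem:ck_optimality}: grow a uniformly random ``good sequence'' of a copy of $\CC{k}$ from a vertex $v_{0}\in A$, both forwards and backwards, toward the antipodal vertex $v_{m}$ — which lies in the \emph{other} part precisely because $m$ is odd — and then choose the last vertex; there the bookkeeping reduces to bounding products of the intersection sizes $|N^{+}(a)\cap N^{-}(a')|$ around the $m$-cycle obtained by reading $\CC{k}$ two arcs at a time, which is the same essential difficulty.)
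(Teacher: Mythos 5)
Your reduction to $\operatorname{tr}(R^{m})\le (pq/4)^{m}$ with $R=PQ^{T}$ and $m=k/2$ odd is sound, and the algebraic identity $4R=qJ-K+N$ with $K=(2P-J)(2P-J)^{T}\succeq 0$, $\operatorname{tr}K=pq$, and $N=\boldsymbol\delta\mathbf 1^{T}-\mathbf 1\boldsymbol\delta^{T}$ checks out. The Weyl argument correctly shows $S=qJ-K$ has at most one positive eigenvalue and $\operatorname{tr}(S)=0$, so $\operatorname{tr}(S^{m})\le(pq)^{m}$ for odd $m$, and the single-$N$ words vanish by antisymmetry. But this bounds only $\operatorname{tr}(S^{m})$, not $\operatorname{tr}((S+N)^{m})=4^{m}\operatorname{tr}(R^{m})$, and you explicitly leave the multi-$N$ terms unestablished: the Sherman--Morrison--Woodbury/contour-integral route and the alternative ``random good sequence'' route are both labelled by you as ``where the real work lies'' and ``the same essential difficulty.'' That is the gap — the statement of the lemma is not proved. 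The symmetric-part bound alone is not enough, because individual words like $\operatorname{tr}(S^{a}NS^{b}N)$ are not sign-controlled term by term and $N$ can have operator norm comparable to $pq$, so some genuine cancellation argument is required and none is given.

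For comparison, the paper does not collapse to $R$ at all. It works with the full $n\times n$ adjacency matrix $M$ and writes $\operatorname{tr}(M^{k})=\sum_i\lambda_i^{k}$. It uses that the spectrum of an oriented bipartite graph has the sign symmetry $\lambda\mapsto-\lambda$, that $\lambda_1$ is the real nonnegative Perron eigenvalue dominating all $|\lambda_i|$, and then two ingredients you did not use: Ky Fan's inequality, which bounds $\sum_{i\le n/2}\operatorname{Re}\lambda_i$ by the corresponding sum of eigenvalues of $\tfrac{M+M^{T}}{2}=\tfrac12 A(K_{m,n-m})$, hence by $\tfrac12\sqrt{m(n-m)}$; and the scalar inequality $\operatorname{Re}z^{k}\le k|z|^{k-1}\operatorname{Re}z$ valid for $\operatorname{Re}z\ge 0$ and $k\equiv 2\pmod 4$, which together with a binomial comparison converts the Ky Fan bound on $\sum\operatorname{Re}\lambda_i$ into the bound $\operatorname{tr}(M^{k})\le 2(n/4)^{k}$. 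The non-normality of $M$ — exactly the thing your antisymmetric perturbation $N$ is tracking — is absorbed into that scalar inequality rather than confronted via a symmetric/antisymmetric splitting of the power trace. To salvage your approach you would essentially need to reimport that inequality on the eigenvalues $\lambda_i^{2}$ of $R$, at which point the collapse to $R$ and the $S+N$ decomposition no longer buy anything.
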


\subsubsection{Proof of Lemma \ref{lem:bipartite_optimal}}

\begin{proof}
    Consider an extremal $(k, \ell)$-cleared graph $G$ on $n$ vertices, where $n$ is large enough (established later).
    The expected number of $\CC{k}$ in a random orientation of the complete balanced bipartite graph is equal to $\frac{2}{k}\left(\frac{n}{4}\right)^k - o(n^k)$, so $G$ contains at least $\frac{2}{k}\left(\frac{n}{4}\right)^k - o(n^k)$ copies of $\CC{k}$.

    For $v\in V(G)$, let $t_v$ denote the number of copies of $\CC{k}$ in $G$ containing $v$. Similarly, denote by $t_{u, v}$ the number of copies of $\CC{k}$ that contain both $u$ and $v$. 

    \begin{clm}
    For any $u, v \in V(G)$, it holds that $|t_v - t_u| < kn^{k-2} = o(n^{k-1})$. 
    \end{clm}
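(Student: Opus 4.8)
The plan is to prove the one‑sided bound $t_v - t_u \le t_{u,v}$ for every pair $u,v\in V(G)$; the claim then follows by symmetry (swapping the roles of $u$ and $v$) together with the crude estimate $t_{u,v}\le (k-1)n^{k-2}<kn^{k-2}$, which holds because a copy of $\CC{k}$ through both $u$ and $v$ is obtained by placing $u$ on the cycle, choosing one of the $k-1$ possible positions for $v$, and selecting the remaining $k-2$ vertices in at most $n^{k-2}$ ways.

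For the one‑sided bound I would use a cloning (vertex‑duplication) argument exploiting the extremality of $G$ among $(k,\ell)$‑cleared graphs. Form a graph $G'$ from $G$ by deleting $u$ and all its incident arcs and adding a fresh vertex $v'$ that is a non‑adjacent twin of $v$: set $N^+_{G'}(v') = N^+_G(v)\setminus\{u\}$, $N^-_{G'}(v') = N^-_G(v)\setminus\{u\}$, keep $v'$ non‑adjacent to $v$, and leave everything else unchanged. Then $G'$ is again an oriented graph on $n$ vertices, and crucially it contains no homomorphic image of $\CC{\ell}$: collapsing $v'$ onto $v$ sends any closed walk of length $\ell$ in $G'$ to a closed walk of length $\ell$ in $G$ (an arc incident to $v'$ becomes an arc incident to $v$ precisely because $v'$ is a twin of $v$), and $G$, being $(k,\ell)$‑cleared, has none. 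Hence, after deleting from $G'$ every arc and vertex not lying on a $\CC{k}$, we again obtain a $(k,\ell)$‑cleared graph, so by the extremality of $G$ it has at most as many copies of $\CC{k}$ as $G$.

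On the other hand, writing $N$ for the number of copies of $\CC{k}$ in $G$, the graph $G'$ has at least $(N - t_u) + (t_v - t_{u,v})$ copies of $\CC{k}$: those avoiding $v'$ are exactly the copies of $\CC{k}$ in $G - u$, of which there are $N - t_u$, while those using $v'$ but not $v$ are in bijection with the copies of $\CC{k}$ in $G$ that use $v$ but not $u$ — replace $v'$ by $v$, and conversely $v$ by $v'$, which is legitimate exactly because $v'$ and $v$ have the same neighbourhood inside $V(G)\setminus\{u\}$ — of which there are $t_v - t_{u,v}$. Combining the two bounds yields $N \ge (N - t_u) + (t_v - t_{u,v})$, that is, $t_v - t_u \le t_{u,v}$, as desired.

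I expect the only genuine subtlety to be the assertion that duplicating a vertex as a twin cannot create a homomorphic image of $\CC{\ell}$ that was not already present; everything else (that $G'$ is an oriented graph, that the displayed correspondence really is a bijection, that clearing $G'$ destroys no $\CC{k}$, and that for $k=3$ the non‑adjacent vertices $v'$ and $v$ can never lie on a common triangle) is routine bookkeeping. It is worth noting that this is exactly the step that forces us to work with the $(k,\ell)$‑cleared extremal graph rather than with a raw $\CC{\ell}$‑free extremal graph: a $\CC{\ell}$‑free graph may still contain homomorphic images of $\CC{\ell}$, and then the cloned graph could acquire an actual $\CC{\ell}$, so extremality could no longer be invoked without an $o(n^k)$ error that would swamp the $n^{k-2}$ we are after.
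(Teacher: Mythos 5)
Your argument is essentially the same as the paper's: both delete $u$, insert a non-adjacent twin $v'$ of $v$, observe that no homomorphic image of $\CC{\ell}$ is created, count the copies of $\CC{k}$ gained and lost, and invoke the extremality of $G$ among $(k,\ell)$-cleared graphs together with the crude bound $t_{u,v}<kn^{k-2}$. Your write-up is a bit more careful about re-clearing $G'$ before invoking extremality and about the bijection between cycles through $v'$ and cycles through $v$ avoiding $u$, but the underlying idea and the structure of the argument are identical.
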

    \begin{proof}
        Assume for the sake of contradiction that $t_v - t_u \geq kn^{k-2}$. Define the graph $G'$ as $G$ after removing $u$ and replacing it with a copy $v'$ of $v$ (not adjacent to the original $v$). Since $G$ does not contain any homomorphic image of $\CC{\ell}$, $G'$ does not contain it either. $G'$ is also $(k, \ell)$-cleared, and the number of copies of $\CC{k}$ in $G'$ is larger than in $G$ by at least $(t_v - t_{u, v}) - t_u > 0$, as $t_{u, v} < kn^{k-2}$, which contradicts the maximality of $G$. 
    \end{proof}

    The above claim implies that for every vertex $v \in V(G)$, the number $t_v$ is not too far from the average number of $\CC{k}$ per vertex, which is greater than $\frac{1}{2}\left(\frac{n}{4}\right)^{k-1} + o(n^{k-1})$. Therefore, we may assume that $n$ is large enough, so that for every $v \in V(G)$ it holds that
    \begin{equation}\label{eq:t_v-bound}
        t_v \geq 0.45\left(\frac{n}{4}\right)^{k-1}.
    \end{equation}

    Let $\widetilde{G}$ be the underlying undirected graph of $G$, i.e., the undirected graph obtained from $G$ by removing the orientation of all arcs. The main idea of the proof is to show, using (\ref{eq:t_v-bound}), a lower bound on the minimum degree of $\widetilde{G}$ and that $\widetilde{G}$ does not contain odd cycles of small length. The Andrásfai-Erd\H os-Sós theorem \cite{andrasfai-erdos-sos} will then imply that $\widetilde{G}$ is bipartite and therefore so is $G$.
    We start by proving that $\widetilde{G}$ does not contain any short odd cycle.
    
    \begin{clm} \label{clm:no_short_odd_cycles}
        Graph $G$ does not contain any orientation of $C_m$ where $m \leq 23$ is odd.
    \end{clm}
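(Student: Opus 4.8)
The plan is to argue by contradiction: assume $G$ contains an orientation $H$ of $C_m$ for some odd $m\le 23$, and produce a homomorphic image of $\CC{\ell}$ in $G$, which is impossible since $G$ is $(k,\ell)$-cleared. The only structural fact I use at first is that every arc of $G$ lies in a copy of $\CC{k}$, so each arc can be \emph{rerouted} along the remaining $k-1$ arcs of such a copy.

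Concretely, I would fix a cyclic direction on $C_m$ and let $q$ be the number of edges of $H$ oriented against it (reversing the direction if needed so that $q\le m/2\le 11$). Traversing $C_m$ in this direction, each agreeing edge is used directly as an arc and each of the $q$ disagreeing edges is replaced by a directed walk of length $k-1$ obtained by deleting the reversed arc from a $\CC{k}$ through it; this produces a closed directed walk of length $m+q(k-2)$ in $G$. Inserting a full loop around a copy of $\CC{k}$ at any vertex of this walk adds $k$ to its length, so $G$ contains a homomorphic image of $\CC{L}$ for every $L$ in the numerical semigroup generated by $k$ and $m+q(k-2)$; since $\ell>33k^2$ lies well above the Frobenius bound here, Corollary~\ref{cor:coin_problem} yields a homomorphic image of $\CC{L}$ for every sufficiently large multiple $L$ of $g:=\gcd\bigl(k,\,m+q(k-2)\bigr)=\gcd(k,\,m-2q)$. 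Hence we are finished as soon as $g\mid\ell$.

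The divisibility bookkeeping is then: since $k$ is even and $m-2q$ is odd, $g$ is an odd divisor of $k$ with $g\le m\le 23$. If $g=1$ we are done immediately; if $g\ge 3$ but $g<d$, then $g\mid\ell$ by the minimality in the definition of $d$ (the extra hypothesis ``$3\nmid k$ or $3\mid\ell$'' is precisely the instance $g=3$), so again we are done. The only surviving situation is $g\ge d\ge 5$ — a \emph{resonant} case, for example $k=10$, $m\in\{5,15\}$, $5\nmid\ell$ — in which a single blind traversal only produces closed walks whose lengths miss $\ell$ modulo $g$. To kill these I would bring in the extremal input: upgrading (\ref{eq:t_v-bound}) in the spirit of the ``thick arc'' argument from the proof of Theorem~\ref{thm:order_of_magnitude} to the statement that each arc of $H$ lies in $\Omega(n^{k-2})$ copies of $\CC{k}$, one can choose, for every arc of $H$, a copy of $\CC{k}$ through it that is internally disjoint from $H$ and from the copies chosen for the other arcs, except possibly for overlaps between copies attached to consecutive arcs of $H$. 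Such an overlap gives a vertex off $H$ shared by two of these copies, hence a shortcut between two vertices of $H$ of ``unexpected'' length, which together with the loop length $k$ generates a semigroup whose gcd divides $\ell$ — again producing a homomorphic image of $\CC{\ell}$; and if no overlap can be arranged, the forced near-disjointness of all these copies of $\CC{k}$ around $H$ contradicts their abundance from (\ref{eq:t_v-bound}). Either way $H$ cannot exist.

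The main obstacle is exactly this last step. The rerouting engine is clean and uniform, but it genuinely fails when $\gcd(m-2q,k)$ is a divisor $\ge d$ of $k$ not dividing $\ell$, and excluding those cases seems to require exploiting the interaction of $H$ with the many copies of $\CC{k}$ guaranteed by extremality rather than the combinatorics of the single cycle $H$; making that interaction argument cover every short odd $m$ uniformly is the delicate part.
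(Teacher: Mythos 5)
The gap you identify at the end is real, and your proposed patch does not close it. Your semigroup is generated only by the two lengths $k$ and $m+q(k-2)$, with $\gcd(k,\,m-2q)=g$; when $g$ happens to be a divisor of $k$ that is $\ge d$, the divisibility condition in Corollary~\ref{cor:coin_problem} is simply unavailable, and no amount of extending the Frobenius bound helps. Your attempted repair (``nearly disjoint'' $\CC{k}$'s around $H$, overlaps giving shortcuts of ``unexpected'' length, abundance from (\ref{eq:t_v-bound})) is not worked out: you never say which lengths the shortcut produces, why their gcd with $k$ must improve, nor how a failure to arrange overlaps actually contradicts any quantitative bound. As you concede, this is ``the delicate part'' --- but it is the entire content of the claim, not a technicality.

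The paper's proof attacks the resonant case in a genuinely different way, and the difference is worth understanding. Rather than trying to manufacture a homomorphic image of $\CC{\ell}$ from $H$ alone, it aims to bound $t_v$ for a vertex $v$ of $H$. The key move is to consider the iterated out-neighborhoods $N^+_i(v)$: if $N^+_i(v)\cap N^+_j(v)\ne\emptyset$ for some $0<j-i<5$, one obtains a \emph{third} closed walk through $v$ of length $ik+(j-i)$, and now
$$\gcd\bigl(k,\;bk+(a-b),\;ik+(j-i)\bigr)=\gcd(k,\,a-b,\,j-i)$$
is an odd number at most $4$, hence $1$ or $3$; with the ``$3\nmid k$ or $3\mid\ell$'' hypothesis this gcd always divides $\ell$, and Corollary~\ref{cor:coin_problem} (with three generators) produces a homomorphic image of $\CC{\ell}$. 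This third walk is precisely what is missing from your two-generator semigroup. If instead no such overlap exists, the sets $N^+_1(v),\dots,N^+_{k-1}(v)$ are pairwise disjoint in windows of length $5$, and an AM--GM computation shows $t_v<0.45\left(\frac n4\right)^{k-1}$, contradicting the lower bound (\ref{eq:t_v-bound}) supplied by the extremal assumption. So the paper's argument is a dichotomy --- either a forbidden $\CC{\ell}$, or a too-small $t_v$ --- and both prongs are essential; your proposal has only the first prong and a placeholder where the second should be.
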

    
    \begin{proof}
        Assume that $G$ contains such an orientation of a cycle with $a$ forward arcs and $b$ backward arcs, where $a + b = m \leq 23$ and $a>b$, and let $v$ be one of its vertices. Our goal is to upper bound~$t_v$.
        Note that if we build a $\CC{k}$ on each of the $b$ forward arcs we obtain a homomorphic image of $\CC{bk + (a - b)}$ containing $v$. 

        We will now show that the sets $N^+_i(v)$ and $N^+_j(v)$ for $i < j \leq k$ are disjoint whenever $j - i < 5$.
        Assuming otherwise, let $u \in N^+_i(v) \cap N^+_j(v)$. This implies that $G$ contains homomorphic images of $\PP{i+1}$ and $\PP{j+1}$ from $v$ to $u$. If we build a $\CC{k}$ on each arc of $\PP{i+1}$, then together with $\PP{j+1}$ it forms a homomorphic image of $\CC{ik + (j-i)}$. 
        Therefore, $v$ is contained in a $\CC{k}$, a homomorphic image of $\CC{ik + (j-i)}$ where $j - i < 5$, and in a homomorphic image of $\CC{bk + (a - b)}$ where $a - b \le 23$ and $a - b$ is odd.
        Note that  
        $$\gcd (k, bk + (a - b), ik + (j - i)) = \gcd (k, a - b,  j - i)$$         
        is less than $5$ and odd, so it is either 1 or 3. 
        If it is 3, then $3 \mid k$ and so $3 \mid \ell$ from assumptions of the lemma. 
        Applying Corollary~\ref{cor:coin_problem} for 
        \begin{align*}
        a_1 &= k,\\
        a_2 &= bk+(a-b) \leq bk+23-2b \leq 11k+1, \\
        a_3 &= ik + (j - i) \le ik + k-i \leq k^2-k+1,
        \end{align*}
        we have $d_1 = k$, $d_2 \leq a - b \leq 23$, and $d_3 = 1$ or $3$. Since
        \[a_2 \frac{d_1}{d_2} + a_3 \frac{d_2}{d_3} -a_1-a_2-a_3\leq (11k + 1)\cdot (k-1) + (k^2 -k+1) \cdot (23-1) - k \leq 33k^2 - 33k + 21 < \ell,\]
        the integer $\ell$ can be represented as a linear combination of $k$, $ik + (j - i)$, $bk + (a - b)$, so $G$ contains a homomorphic image of $\CC{\ell}$, which is a contradiction proving that indeed the sets $N^+_i(v)$ and $N^+_j(v)$ for $i < j \leq k$ are disjoint whenever $j - i < 5$. 

        The number $t_v$ is upper bounded by $|N^+_1(v)| \cdot |N^+_2(v)| \cdot \ldots \cdot |N^+_{k-1}(v)|$. Let $r$ be such a number in the set $\{2, 3, 4, 5, 6\}$ that $k\equiv r \pmod{5}$. Since $k \equiv 2 \pmod{4}$, we have $k+4r \equiv 10 \pmod{20}$, so $k\neq 2$ implies that $k\geq 30 - 4r$.
        Using the fact that every $5$ consecutive sets in $N^+_1(v), N^+_2(v), \ldots, N^+_{k-1}(v)$ are pairwise disjoint, we can group the product elements into $\frac{k-r}{5}$~groups of five and one group of size $r - 1$, and apply the AM-GM inequality in each group to obtain
        \[t_v \le \left(\frac{n}{5}\right)^{k-r} \left( \frac{n}{r - 1} \right)^{r - 1} < \left( \frac{n}{4} \right)^{k - (30 - 4r)} \left( \frac{n}{5} \right)^{30 - 5r} \left( \frac{n}{r - 1} \right)^{r - 1}.\]
        This is less than $0.45\left(\frac{n}{4}\right)^{k-1}$ whenever
        \[\left( \frac{1}{5} \right)^{30 - 5r} \left( \frac{1}{r - 1} \right)^{r - 1} < 0.45\left( \frac{1}{4} \right)^{30 - 4r - 1}.\]
        It is straightforward to verify that the last inequality holds for all $r\in \{2, 3, 4, 5, 6\}$. This contradicts inequality (\ref{eq:t_v-bound}) and concludes the proof.
        \end{proof}

        In order to prove a lower bound on the minimum degree of $\widetilde{G}$, we firstly upper bound the number of directed paths in $G$ of a given length. Let $p_i$ denote the number of copies of $\PP{i}$ in $G$. 

        \begin{clm}\label{cla:path_bound}
        For every positive integer $j$ it holds that $p_{2j} \leq n(\frac{n}{4})^{2j-1}$.
        \end{clm}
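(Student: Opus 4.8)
The plan is to bound $p_{2j}$ by the number of directed walks on $2j$ vertices in $G$ (every $\PP{2j}$ is such a walk) and then to use two ingredients. First, the underlying undirected graph $\widetilde G$ of $G$ is triangle‑free, since Claim~\ref{clm:no_short_odd_cycles} forbids $C_3$. Second, the target quantity $n(n/4)^{2j-1}$ is exactly $2^{-(2j-1)}$ times $n(n/2)^{2j-1}$, and the latter is the correct upper bound for the number of \emph{undirected} walks on $2j$ vertices in a triangle‑free graph. The base case $j=1$ is immediate: $p_2$ is the number of arcs of $G$, so Mantel's theorem gives $p_2\le n^2/4=n\cdot\tfrac n4$.

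For the undirected bound, write $W_m$ for the number of walks on $m$ vertices in $\widetilde G$. Peeling the first two vertices $w_1,w_2$ off a walk $w_1w_2\cdots w_m$ and summing over $w_3,\dots,w_m$, the number of choices for $w_1,w_2$ is $\sum_{w_2\in N(w_3)}d(w_2)$; since $w_2$ is adjacent to $w_3$ and $\widetilde G$ is triangle‑free we have $d(w_2)\le n-d(w_3)$, so this is at most $d(w_3)(n-d(w_3))\le n^2/4$. Hence $W_m\le\tfrac{n^2}{4}W_{m-2}$, and starting from $W_2=2|E(\widetilde G)|\le n^2/2$ this telescopes (with equality for $\widetilde G=K_{n/2,n/2}$) to $W_{2j}\le n(n/2)^{2j-1}$. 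The remaining, harder step is to show that the number of directed walks on $2j$ vertices in $G$ is at most $2^{-(2j-1)}W_{2j}$. This should hold for every oriented graph: for length one it is an equality, and for length two it reads $4\sum_v d^+(v)d^-(v)\le\sum_v\bigl(d^+(v)+d^-(v)\bigr)^2$, i.e.\ $\sum_v\bigl(d^+(v)-d^-(v)\bigr)^2\ge0$; for longer walks the natural approach is an induction (or a spectral/sampling argument in the spirit of the proof of Lemma~\ref{lem:ck_optimality}) comparing $\mathbf 1^{T}A^{2j-1}\mathbf 1$ with $\mathbf 1^{T}(A+A^{T})^{2j-1}\mathbf 1$, where $A$ is the adjacency matrix of $G$, using that all the ``mixed'' words in $A$ and $A^{T}$ contribute non‑negatively. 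Combining the two steps gives $p_{2j}\le 2^{-(2j-1)}n(n/2)^{2j-1}=n(n/4)^{2j-1}$.

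The crux, which I expect to be the main obstacle, is this directed‑versus‑undirected comparison, i.e.\ extracting the sharp constant $n/4$ per arc. Orientation‑blind estimates are genuinely too weak: the per‑edge bound $d(u)d(v)\le n^2/4$ together with $|E(\widetilde G)|\le n^2/4$ only gives $p_4\le n^4/16$ rather than the required $n^4/64$, and iterating it loses a factor $4$ for every two arcs; moreover no such estimate can distinguish a one‑way‑oriented $K_{n/2,n/2}$ (which has essentially no long directed walks) from an Eulerian one (for which the $2^{-(2j-1)}$ inequality is tight), so the orientation must be used in an essential way. If a clean proof of the general directed‑versus‑undirected walk inequality turns out to be inconvenient, the fallback is to run the two‑vertex peeling directly on $G$, carrying along both $p_{2j}$ and an auxiliary quantity recording the in/out‑imbalance at the two ends of the path, and to use the degree control supplied by inequality~(\ref{eq:t_v-bound}) together with the local ``directed‑bipartiteness'' of $G$ that follows from Claim~\ref{clm:no_short_odd_cycles}; in that case a few of the smallest values of $j$ may have to be checked separately.
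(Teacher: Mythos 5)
Your first step (the peeling bound $W_{2j}\le n(n/2)^{2j-1}$ for a triangle-free $\widetilde{G}$) is correct, and you have correctly identified the crux: extracting the factor $2^{-(2j-1)}$ from the orientation. But the intermediate inequality on which your main route rests --- that for \emph{every} oriented graph the number of directed walks on $2j$ vertices is at most $2^{-(2j-1)}W_{2j}$ --- is false, and the suggested justification (``all the mixed words contribute non-negatively'') cannot deliver it: non-negativity only yields $\mathbf 1^{T}A^{r}\mathbf 1\le \mathbf 1^{T}(A+A^{T})^{r}\mathbf 1$, not the needed factor $2^{-r}$, which would require the $2^{r}-1$ mixed-pattern walk counts to \emph{dominate} the all-forward count, and that is not automatic. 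A concrete, even bipartite, counterexample occurs already at $j=2$: take the ``double broom'' on $n=12$ vertices with arcs $x_{1},\dots,x_{5}\to u$, $u\to v$, and $v\to y_{1},\dots,y_{5}$. Then $p_{4}=25$ (the paths $x_{i}\to u\to v\to y_{j}$) while $\mathbf 1^{T}(A+A^{T})^{3}\mathbf 1=192$, so $p_{4}=25>24=2^{-3}W_{4}$. Your $r=2$ check is fine, but the claim already breaks at $r=3$. It is conceivable that the inequality holds for the much more structured graphs that actually appear here (which satisfy the degree bound~(\ref{eq:t_v-bound}) and in which every arc lies on a $\CC{k}$), but you neither establish this nor develop the ``fallback'' far enough to close the gap.

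The paper's proof sidesteps the undirected intermediary entirely. It peels two vertices at a time off the \emph{directed} path, tracking the weighted sum $\sum_{v}d^{+}(v)\,p_{i}^{2}(v)$ where $p_{i}(v)$ counts directed $\PP{i}$ ending at $v$, applying Cauchy--Schwarz to $p_{i}(v)=\sum_{u\in N^{-}(v)}p_{i-1}(u)$, and then a four-term AM--GM on $d^{+}(u)$, $d^{-}(u)$, $\tfrac{\alpha(u)}{2}$, $\tfrac{\alpha(u)}{2}$, where $\alpha(u)$ is the size of the largest independent set containing $u$. Triangle-freeness gives both $d^{+}(v)+d^{-}(v)\le\alpha(u)$ for $v\in N^{+}(u)$ and $d^{+}(u)+d^{-}(u)+\alpha(u)\le n$, so the resulting product is at most $(n/4)^{4}$ per two-step peel. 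In this way the orientation and the triangle-freeness are used together, at the same place, to obtain the sharp $n/4$ per arc, which is exactly what a separate ``directed-versus-undirected'' comparison cannot provide in general.
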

        \begin{proof}
            For any $v\in V(G)$ and integer $i$, denote by $p_i(v)$ the number of copies of $\PP{i}$ in $G$ ending in~$v$ and by $\alpha(v)$ the size of the largest independent set in $G$ which contains $v$. 
            The Cauchy-Schwarz inequality implies
            \begin{equation}\label{eq:p_i^2-bound}
            p_i^2(v) \leq \left(\sum_{u\in N^-(v)} p_{i-1}(u) \right)^2 \leq d^-(v) \sum_{u\in N^-(v)} p^2_{i-1}(u).
            \end{equation}      

            Since $G$ does not contain any triangles by Claim \ref{clm:no_short_odd_cycles}, for any vertices $u\in V(G)$ and $v \in N^+(u)$, the set $N^+(v) \cup N^-(v)$ is an independent set including $u$, hence 
            \begin{equation}\label{eq:d^+d^-bound}
            \forall_{v\in N^+(u)} \; d^+(v) + d^-(v) \leq \alpha(u).
            \end{equation}
            
            By applying the above two bounds and the AM-GM inequality the following inequalities hold for every integer $i$. 
            
            \begin{align*}
                \sum_v d^+(v) p_i^2(v) 
                &\stackrel{(\ref{eq:p_i^2-bound})}{\leq}
                \sum_v d^+(v) d^-(v) \sum_{u\in N^-(v)} p_{i-1}^2 (u) 
                = \sum_u \sum_{v \in N^+(u)} d^+(v) d^-(v) p_{i-1}^2 (u) \nonumber \\ 
                &\leq \sum_u \sum_{v \in N^+(u)} \left( \frac{d^+(v) + d^-(v)}{2}\right)^2 p_{i-1}^2(u) \\ 
                &\stackrel{(\ref{eq:d^+d^-bound})}{\leq}
                \sum_u d^+(u) \frac{\alpha^2(u)}{4} p^2_{i-1}(u) 
                \stackrel{(\ref{eq:p_i^2-bound})}{\leq}
                \sum_u d^+(u) \frac{\alpha^2(u)}{4} d^-(u) \sum_{v\in N^-(u)} p_{i-2}^2(v) \\ 
                &\leq \sum_u \left(\frac{d^+(u) + d^-(u) + \alpha(u)}{4}\right)^4 \sum_{v \in N^-(u)} p_{i-2}^2(v) \\ 
                &\leq \sum_u \left( \frac{n}{4} \right)^4 \sum_{v \in N^-(u)} p_{i-2}^2(v) 
                = \left( \frac{n}{4} \right)^4 \sum_v d^+(v) p_{i - 2}^2(v)
            \end{align*}
            Repeated application of the above bound yields for every integer $j$
            \begin{align*}
                \sum_v d^+(v) p_{2j-1}^2(v) 
                &\leq 
                \left( \frac{n}{4} \right)^4 \sum_v d^+(v) p_{2j-3}^2(v) 
                \leq 
                \left( \frac{n}{4} \right)^8 \sum_v d^+(v) p_{2j-5}^2(v) 
                \leq \dots  \\ 
                &\leq
                \left( \frac{n}{4} \right)^{4j-4} \sum_v d^+(v) p_1^2(v) = \left( \frac{n}{4} \right)^{4j-4} e(G).
            \end{align*}
            This, together with the Cauchy-Schwarz inequality, allows to finally bound $p_{2j}$ by counting each $\PP{2j}$ by its last arc as follows
            \begin{align*}
                p_{2j} &= \sum_{vu\in E(G)} p_{2j-1}(v) 
                \leq \left(e(G) \sum_{vu\in E(G)} p^2_{2j - 1}(v)\right)^{\frac{1}{2}} \\ 
                &= \left(e(G) \sum_v d^+(v) p^2_{2j-1}(v)\right)^{\frac12} 
                \leq \left(\frac{n}{4}\right)^{2j-2} e(G) 
                \leq n \left(\frac{n}{4}\right)^{2j-1},
            \end{align*}
            where the last inequality follows from Mantel's theorem. 
        \end{proof}

        \begin{clm} \label{clm:big_degree}
            Graph $\widetilde{G}$ satisfies $\delta(\widetilde{G}) > \frac{2}{23}n$.
        \end{clm}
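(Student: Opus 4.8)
The plan is a proof by contradiction. Suppose some vertex $v$ has $d_{\widetilde G}(v)\le\tfrac{2}{23}n$, so in particular $d^+(v),d^-(v)\le\tfrac{2}{23}n$; I will show that this forces $t_v<0.45\left(\tfrac n4\right)^{k-1}$, contradicting \eqref{eq:t_v-bound} and hence proving $\delta(\widetilde G)>\tfrac{2}{23}n$.

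The first step reduces $t_v$ to counting directed paths that avoid $v$. Deleting $v$ from any copy of $\CC{k}$ through it leaves a copy of $\PP{k-1}$ whose first vertex lies in $N^+(v)$ and whose last vertex lies in $N^-(v)$, and distinct copies of $\CC{k}$ through $v$ yield distinct such paths; hence $t_v$ is at most the number of copies of $\PP{k-1}$ in $G$ whose endpoints lie in the prescribed small sets $N^+(v)$ and $N^-(v)$. So it suffices to prove that a $(k,\ell)$-cleared graph contains few copies of $\PP{k-1}$ running between two given vertex sets of size at most $\tfrac{2}{23}n$.

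For this path count I would combine three ingredients: (i) Claim~\ref{cla:path_bound}, which controls the total number of even-order directed paths (this is where the hypothesis $2\mid k$ enters, so that pieces of the right even length are available); (ii) the triangle-freeness of $\widetilde G$, a consequence of Claim~\ref{clm:no_short_odd_cycles}, which forces cyclically consecutive out-neighbourhoods $N^+_i(v)$ (and in-neighbourhoods $N^-_i(v)$) to be disjoint, so that the short pieces of the $\PP{k-1}$ hugging $N^+(v)$ and $N^-(v)$ cannot be too numerous; and (iii) a Cauchy--Schwarz step at a suitable splitting vertex of the $\PP{k-1}$, together with $e(\widetilde G)\le n^2/4$ from Mantel's theorem, to glue the pieces. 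This yields a bound $t_v\le c\left(\tfrac n4\right)^{k-1}$ with an explicit constant $c$, and the goal is to get $c<0.45$.

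The main obstacle is precisely controlling this constant. The crudest estimate, $t_v\le d^-(v)\cdot p_{k-2}\le\tfrac{2}{23}n\cdot n\left(\tfrac n4\right)^{k-3}=\tfrac{32}{23}\left(\tfrac n4\right)^{k-1}$, overshoots $0.45$ by a factor of about three, so the refinement has to genuinely exploit that \emph{both} $N^+(v)$ and $N^-(v)$ are small and that $\widetilde G$ is triangle-free, not merely one of these; the threshold $\tfrac{2}{23}$ is then what this bookkeeping forces. It is also no coincidence that $\tfrac{2}{23}>\tfrac{2}{25}$: once $\widetilde G$ is known to contain no odd cycle of length at most $23$, a minimum degree exceeding $\tfrac{2}{25}n$ is exactly the hypothesis of the Andrásfai--Erd\H os--Sós theorem that is invoked immediately afterwards to conclude that $\widetilde G$, and hence $G$, is bipartite.
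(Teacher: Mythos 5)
Your overall strategy — contradiction from \eqref{eq:t_v-bound}, reducing $t_v$ to a count of directed paths whose endpoints sit in the small sets $N^+(v)$, $N^-(v)$, then invoking Claim~\ref{cla:path_bound} and the triangle-freeness from Claim~\ref{clm:no_short_odd_cycles} — is exactly the framework the paper uses, and your diagnosis that the crude estimate $t_v\le d(v)\cdot p_{k-2}$ gives $\tfrac{32}{23}(n/4)^{k-1}$ and thus overshoots is correct. However, you stop precisely at the step that carries the proof, and the one ingredient you do venture for the refinement — ``a Cauchy--Schwarz step at a suitable splitting vertex, together with Mantel'' — is not what actually closes the gap and is unlikely to do so on its own: Cauchy--Schwarz and Mantel are what prove Claim~\ref{cla:path_bound} in the first place, so reusing them here would at best reproduce the crude bound.

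The missing idea is a specific decomposition of the cycle. The paper peels off \emph{three} vertices at fixed positions — the successor of $v$, the vertex two steps out, and the predecessor of $v$ — yielding
\[
t_v \;\le\; |N^+(v)|\cdot|N^+_2(v)|\cdot p_{k-4}\cdot|N^-(v)|,
\]
with the middle $k-4$ vertices forming a directed path (here is where $2\mid k$ enters, so that $p_{k-4}$ falls under Claim~\ref{cla:path_bound}). Triangle-freeness then makes $N^+(v)$, $N^+_2(v)$, $N^-(v)$ \emph{pairwise disjoint}, so $|N^+(v)|+|N^-(v)|\le d(v)$ and $|N^+_2(v)|\le n-d(v)$; an AM--GM step gives $|N^+(v)|\cdot|N^-(v)|\le (d(v)/2)^2$. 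Multiplying with $p_{k-4}\le n(n/4)^{k-5}$ and maximizing over $d(v)\le\tfrac{2}{23}n$ gives $\tfrac{21\cdot 4^4}{23^3}(n/4)^{k-1}<0.45(n/4)^{k-1}$. You identified all the raw materials, but without the $|N^+(v)|\cdot|N^+_2(v)|\cdot p_{k-4}\cdot|N^-(v)|$ split — crucially including $N^+_2(v)$ as the third disjoint set, which is how triangle-freeness actually buys you a factor — the bookkeeping you allude to has no mechanism to reach $c<0.45$, so the proposal as written has a genuine gap rather than being an alternative proof.
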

        \begin{proof}
        Assume $\widetilde{G}$ contains a vertex $v$ with $d(v) \leq \frac{2}{23}n$. It holds that
        $$
        t_v \leq |N^+(v)| \cdot |N^+_2(v)| \cdot p_{k-4} \cdot |N^-(v)|
        $$
        because we can form a unique $\CC{k}$ containing $v$ by choosing the successor of $v$ on the cycle on $|N^+(v)|$ ways, then the next vertex on at most $|N^+_2(v)|$ ways, then the subsequent $k-4$ vertices together (they must form a path) and finally, the last vertex on at most $|N^-(v)|$ ways.
        
        Since $G$ does not contain any triangle by Claim~\ref{clm:no_short_odd_cycles}, the sets $N^+(v)$, $N^+_2(v)$, $N^-(v)$ must be pairwise disjoint and consequently $|N^+(v)| \cdot |N^-(v)| \cdot |N^+_2(v)| \leq \left(\frac{d(v)}{2}\right)^2 (n - d(v))$. Together with Claim~\ref{cla:path_bound} this yields
        \begin{align*}
        t_v \leq \left(\frac{d(v)}{2}\right)^2 (n - d(v)) \cdot n \left( \frac{n}{4} \right)^{k - 5} \leq \frac{21}{23^3}n^4\left( \frac{n}{4} \right)^{k - 5} = \frac{21\cdot4^4}{23^3}\left( \frac{n}{4} \right)^{k - 1} < 0.45\left( \frac{n}{4} \right)^{k - 1},
        \end{align*}
        contradicting (\ref{eq:t_v-bound}) and finishing the proof.
        \end{proof}

        As mentioned earlier, since $\delta(\widetilde{G}) > \frac{2}{23}n$ by Claim \ref{clm:big_degree} and $\widetilde{G}$ does not contain odd cycles of length at most 23 by Claim~\ref{clm:no_short_odd_cycles}, the Andrásfai-Erd\H os-Sós theorem \cite{andrasfai-erdos-sos} implies that $\widetilde{G}$, and so $G$, is bipartite. 
        Moreover, we can assume that $G$ is an orientation of a complete bipartite graph, because adding an arc between the two color classes of $G$ does not create $\CC{\ell}$ and may only increase the number of copies of $\CC{k}$. 
\end{proof}

\subsubsection{Proof of Lemma \ref{lem:knn_rand_optimal}}

\begin{proof}
    Label the vertices of $G$ as $v_{1}, \ldots, v_{m}, v_{m+1}, \ldots, v_n$ where the first $m$ vertices belong to the first color class and the remaining $n - m$ vertices to the second color class. Let $M$ be the adjacency matrix of $G$. Then
    \begin{itemize}
        \item $M_{ij} = 0$ if $1 \leq i, j \leq m$ or $m+1 \leq i, j \leq n$.
        \item For $1 \leq i \leq m < j \leq n$, either $M_{ij} = 1$ or $M_{ji} = 1$.
    \end{itemize}

    The number of homomorphic images of $\CC{k}$ in $G$ is equal to $\frac{1}{k}\text{tr}(M^k) = \frac1k (\lambda_1^k + \ldots + \lambda_n^k)$, where $\{\lambda_1, \lambda_2, \ldots, \lambda_n\} = \sigma(M)$ is the multiset of eigenvalues of $M$. Assume the eigenvalues $\lambda_i$ are ordered so that $\re \lambda_1 \geq \re \lambda_2 \geq \ldots \geq \re \lambda_n$. Note the following properties:
    \begin{itemize}
        \item If $\lambda \in \sigma(M)$, then $\bar{\lambda} \in \sigma(M)$, because $A$ is a real matrix and so its characteristic polynomial has real coefficients.
        \item If $\lambda \in \sigma(M)$, then $-\lambda \in \sigma(M)$, because when $\overrightarrow{v} = (v_1, \ldots, v_n)$ is an eigenvector for the eigenvalue $\lambda$, then $\overrightarrow{v'} = (-v_1, \ldots, -v_m, v_{m+1}, \ldots, v_n)$ is an eigenvector for $-\lambda$. In particular, $\re \lambda_{n+1-i} = -\re \lambda_i$.
        \item By the non-negative variant of the Perron-Frobenius theorem (see e.g. \cite{minc1988}, Theorem 4.2), $\lambda_1$ is real, non-negative and for every $i$ it holds that $|\lambda_i| \leq \lambda_1$.
    \end{itemize}

    We start with proving a bound on the sum of the positive real parts of the eigenvalues of $M$. In the proof we will make use of the following proposition, which is a special case of Theorem 2 from \cite{fan1950theorem}.
        \begin{prop}[Ky Fan \cite{fan1950theorem}] \label{prop:eigenvalue_sum}
            If $\rho_1 \geq \rho_2 \geq \ldots \geq \rho_n$ are the eigenvalues of $\frac{M+M^T}{2}$, then $\sum_{i=1}^{\lfloor n / 2 \rfloor } \re \lambda_i \leq \sum_{i=1}^{\lfloor n / 2 \rfloor } \rho_i$.
        \end{prop}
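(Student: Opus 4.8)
The plan is to obtain the inequality from the Schur triangularization of $M$ combined with Ky Fan's variational principle for partial sums of eigenvalues of a Hermitian matrix. Throughout, write $H=\frac{M+M^{T}}{2}$ for the (real symmetric) Hermitian part of $M$, so that $\rho_1\geq\cdots\geq\rho_n$ are the eigenvalues of $H$.

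First I would invoke the Schur decomposition: there is a unitary matrix $U$ such that $U^{*}MU=T$ is upper triangular, and one may moreover arrange the ordering so that the diagonal entries of $T$, which are precisely the eigenvalues $\lambda_1,\dots,\lambda_n$ of $M$, appear with $\re\lambda_1\geq\cdots\geq\re\lambda_n$, matching the ordering fixed in the surrounding text. Since conjugation by $U$ commutes with taking the Hermitian part, $\frac{T+T^{*}}{2}=U^{*}HU$, and comparing $i$-th diagonal entries gives $\re\lambda_i=\big(\tfrac{T+T^{*}}{2}\big)_{ii}=(U^{*}HU)_{ii}$ for every $i$.

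Next, set $m=\lfloor n/2\rfloor$ and let $P$ be the $n\times m$ matrix whose columns are the first $m$ columns $u_1,\dots,u_m$ of $U$; these are orthonormal. Then
\[
\sum_{i=1}^{m}\re\lambda_i=\sum_{i=1}^{m}(U^{*}HU)_{ii}=\sum_{i=1}^{m}u_i^{*}Hu_i=\operatorname{tr}(P^{*}HP).
\]
The proof concludes by applying Ky Fan's maximum principle: for a Hermitian matrix $H$ with eigenvalues $\rho_1\geq\cdots\geq\rho_n$, one has $\operatorname{tr}(P^{*}HP)\leq\rho_1+\cdots+\rho_m$ for every $n\times m$ matrix $P$ with orthonormal columns (with equality when the columns span a top eigenspace). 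This yields $\sum_{i=1}^{m}\re\lambda_i\leq\sum_{i=1}^{m}\rho_i$, as claimed; in fact the same argument proves the statement with $m$ replaced by any $k\in\{1,\dots,n\}$, which is the general form of Fan's theorem.

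The only steps requiring care are the real-versus-complex bookkeeping — $M$ and $H$ are real, but the Schur factor $U$ is in general complex, so one must work with the conjugate transpose $*$ throughout and use that the diagonal entries of the Hermitian matrix $U^{*}HU$ are automatically real — and citing (or giving a short Courant–Fischer argument for) Ky Fan's maximum principle, which is the only nontrivial ingredient. Beyond that there is no real obstacle; since the statement is classical, one could alternatively just quote Theorem 2 of \cite{fan1950theorem} verbatim.
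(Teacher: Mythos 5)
Your argument is correct, and it is essentially the standard proof of this inequality: Schur-triangularize $M$ so that the diagonal entries are the eigenvalues in order of decreasing real part, observe that the diagonal of $U^*HU$ then records the real parts $\re\lambda_i$, rewrite the partial sum as $\operatorname{tr}(P^*HP)$ with $P$ the first $\lfloor n/2\rfloor$ columns of $U$, and invoke Ky Fan's trace maximum principle for the Hermitian matrix $H$. The real-versus-complex bookkeeping you flag is handled correctly (working with $*$ throughout, using $M^T=M^*$ for real $M$, and noting that the diagonal of a Hermitian matrix is real). The only thing to be aware of is that the paper does not actually prove this proposition --- it is stated as a black-box citation of Theorem~2 from Fan's 1950 paper --- so there is no in-paper argument to compare against; your write-up supplies a self-contained proof modulo the Ky Fan maximum principle, which (as you note) can itself be cited or derived quickly from Courant--Fischer. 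One minor remark: your proof shows the inequality for every $m\in\{1,\dots,n\}$, which is strictly more than the stated special case $m=\lfloor n/2\rfloor$; that is fine, and is indeed the form in which Fan's theorem is usually quoted.
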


    \begin{clm}
        $\sum_{i=1}^{\lfloor n / 2 \rfloor} \re \lambda_i \leq \frac{1}{2}\sqrt{m(n-m)}$
    \end{clm}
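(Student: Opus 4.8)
The plan is to deduce the claim almost immediately from Proposition~\ref{prop:eigenvalue_sum} by identifying the matrix $\frac{M+M^T}{2}$ explicitly. First I would observe that since $G$ is an orientation of the complete bipartite graph whose color classes have sizes $m$ and $n-m$, for every $i$ in the first class and $j$ in the second class exactly one of $M_{ij}, M_{ji}$ equals $1$, whence $M_{ij}+M_{ji}=1$; and $M_{ij}+M_{ji}=0$ whenever $i$ and $j$ lie in the same class. Therefore $M+M^T$ is precisely the adjacency matrix of the undirected complete bipartite graph $K_{m,n-m}$, so $\frac{M+M^T}{2}=\tfrac12 A(K_{m,n-m})$.

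Next I would recall the spectrum of $K_{m,n-m}$: its adjacency matrix has rank $2$ (its off-diagonal blocks are all-ones matrices of rank $1$) and trace $0$, and the trace of its square equals $\sum_i \deg(i) = 2m(n-m)$; since the matrix is real symmetric, its two nonzero eigenvalues are $\sqrt{m(n-m)}$ and $-\sqrt{m(n-m)}$, with $0$ of multiplicity $n-2$. Hence, writing $\rho_1 \ge \rho_2 \ge \cdots \ge \rho_n$ for the eigenvalues of $\frac{M+M^T}{2}$, we have $\rho_1 = \tfrac12\sqrt{m(n-m)}$, $\rho_n = -\tfrac12\sqrt{m(n-m)}$, and $\rho_2 = \cdots = \rho_{n-1} = 0$.

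To finish, since a genuine bipartite split forces $m\ge 1$ and $n-m\ge 1$, hence $n\ge 2$, we have $1 \le \lfloor n/2\rfloor \le n-1$, so among $\rho_1,\dots,\rho_{\lfloor n/2\rfloor}$ exactly one term (namely $\rho_1$) is nonzero and all the others vanish. Thus $\sum_{i=1}^{\lfloor n/2\rfloor}\rho_i = \tfrac12\sqrt{m(n-m)}$, and Proposition~\ref{prop:eigenvalue_sum} yields $\sum_{i=1}^{\lfloor n/2\rfloor}\re\lambda_i \le \tfrac12\sqrt{m(n-m)}$, as wanted. I do not expect any real obstacle here; the only point requiring a moment's care is the index range $\lfloor n/2\rfloor$ — it must be large enough to include the single positive eigenvalue of $\frac{M+M^T}{2}$ but never reach the negative one, which holds for every $n\ge 2$.
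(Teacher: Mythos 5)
Your proof is correct and follows essentially the same route as the paper: identify $\frac{M+M^T}{2}$ as $\tfrac12$ times the adjacency matrix of $K_{m,n-m}$, read off its spectrum, and apply Ky Fan's inequality (Proposition~\ref{prop:eigenvalue_sum}). The only cosmetic difference is that you derive the spectrum of $K_{m,n-m}$ from rank/trace considerations whereas the paper simply states it; both are fine, and the degenerate case $m\in\{0,n\}$ (which you sidestep by assuming a genuine bipartition) is trivially true anyway since then $M+M^T=0$.
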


    \begin{proof}
        The matrix $A = \frac{M+M^T}{2}$ is a block matrix, where $A_{ij} = 0$ when $1\leq i,j \leq m$ or $m < i, j \leq n$ and $A_{ij} = \frac12$ otherwise. In other words, the matrix $2A$ is the adjacency matrix of $K_{m,n-m}$. Therefore, the eigenvalues of $A$ are
        $$
        (\rho_1, \rho_2, \ldots, \rho_n) = \left(\frac12\sqrt{m(n-m)}, 0, 0, \ldots, 0, -\frac12\sqrt{m(n-m)} \right)
        $$
        and Proposition \ref{prop:eigenvalue_sum} gives $\sum_{i=1}^{\lfloor n / 2 \rfloor } \re \lambda_i \leq \sum_{i=1}^{\lfloor n / 2 \rfloor } \rho_i = \frac12\sqrt{m(n - m)}$.
    \end{proof}

    We are now ready to bound $\text{tr}(M^k)$. Denoting $\rho = \lambda_1$, we have
    \begin{align*}
    \sum_{i=1}^n \lambda_i^k 
    &\stackrel{(a)}{=} \sum_{i=1}^{n} \re \lambda_i^k 
    \stackrel{(b)}{=} 2 \sum_{i=1}^{\lfloor n/2 \rfloor} \re \lambda_i^k 
    = 2 \left( \rho^k + \sum_{i=2}^{\lfloor n/2 \rfloor} \re \lambda_i^k \right)
    \stackrel{(c)}{\leq} 2\left( \rho^k + \sum_{i=2}^{\lfloor n/2 \rfloor} k \rho^{k-1} \re \lambda_i \right) \\
    &\stackrel{(d)}{\leq} 2\left(\rho + \sum_{i=2}^{\lfloor n/2 \rfloor} \re \lambda_i\right)^k 
    = 2 \left(\sum_{i=1}^{\lfloor n/2 \rfloor} \re \lambda_i\right)^k
    \leq 2 \left(\frac{1}{2}\sqrt{m(n-m)}\right)^k 
    \leq 2 \left( \frac{n}{4} \right)^k,
    \end{align*}
    where the respective inequalities follow from the following properties:
    \begin{enumerate}[label=($\alph*$)]
        \item Because $\text{tr}(M^k)$ is real.  
        \item As noted earlier, $\re \lambda_{n+1-i} = -\re \lambda_i$, so $\re \lambda_{n+1-i}^k = \re \lambda_i^k$ since $k$ is even.  
        \item It follows from the fact that for any complex number $z$ with $\re z \geq 0$ and $k\equiv 2 \pmod{4}$ we have $\re z^k \leq k |z|^{k-1} \re z$. The proof can be found in \cite{grzesik2023cycles}, but we include it for completeness. \linebreak
        Let $\alpha = \text{arg}\, (z)$. By symmetry, we can assume that $\alpha \in [0, \frac{\pi}{2}]$. Let $\beta = \frac{\pi}{2} - \alpha$. We first show that $-\cos k\beta \leq k \sin \beta$. Indeed, if $0\leq \beta \leq \frac{\pi}{2k}$, then $-\cos k\beta \leq 0 \leq k\sin \beta$, while if $\beta > \frac{\pi}{2k}$, then $\sin \beta > \frac{1}{k}$ and $-\cos k\beta \leq 1 < k \sin \beta$. Having this, we have
        $$
        \re z^k = |z|^k \cos k\alpha = -|z|^k \cos k\beta \leq k|z|^k \sin \beta = k|z|^k \cos \alpha = k|z|^{k-1} \re z.
        $$
        \item Binomial expansion of the right-hand side yields the left-hand side and possible additional terms, which are non-negative since $\re \lambda_i \ge 0$ for $i \le n/2$.
    \end{enumerate}

The proven inequality shows that the number of homomorphic images of $\CC{k}$ in $G$ is upper bounded by $\frac{2}{k}\left(\frac{n}{4}\right)^k$ as desired. 
\end{proof}

\subsection{Small cycles}\label{sec:other_constructions}

In this subsection, we show that the assumption that $\ell$ is large enough compared to $k$ in Theorems~\ref{thm:general_thm} and \ref{thm:rand_thm} is indeed necessary. We focus on the smallest cases $k = 3, 4, 5$ that already present the growing level of complexity and different possible extremal constructions.

\subsubsection{Maximizing $\CC{3}$}

\begin{theorem}\label{thm:ex_C_3_C_l}\hfill
\begin{enumerate}[label=\alph*\textnormal{)}]
\item $\ex(n,\CC{3},\CC{\ell})=\left\lceil\frac{n}{3}\right\rceil\left\lceil\frac{n-1}{3}\right\rceil\left\lceil\frac{n-2}{3}\right\rceil$ for $\ell=4,\ 5$.
\item $\ex(n, \CC{3}, \CC{\ell}) = \frac{n^3}{27} + O(n^2)$ for $\ell > 6$ not divisible by $3$.
\end{enumerate}
\end{theorem}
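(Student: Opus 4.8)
We first record the lower bounds, common to both parts. A balanced blow-up of $\CC3$ on $n$ vertices has blobs of sizes $\lceil n/3\rceil$, $\lceil(n-1)/3\rceil$, $\lceil(n-2)/3\rceil$, contains exactly their product many copies of $\CC3$, and has no directed cycle of length not divisible by $3$, hence no $\CC\ell$ when $3\nmid\ell$. Since a product of three non-negative integers with fixed sum is maximized at the balanced partition, this gives $\ex(n,\CC3,\CC\ell)\ge\lceil n/3\rceil\lceil(n-1)/3\rceil\lceil(n-2)/3\rceil=\frac{n^3}{27}+O(n^2)$ in both cases.

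For the upper bound in (b), let $G$ be an extremal $\CC\ell$-free graph with $\ell>6$, $3\nmid\ell$. Discard every arc lying in no $\CC3$ (which destroys no triangle), then iteratively discard arcs lying in fewer than $C$ copies of $\CC3$ for a large constant $C=C(\ell)$; each discard kills fewer than $C$ triangles and at most $n^2$ arcs are discarded, so only $O(n^2)$ copies of $\CC3$ are lost and in the remaining graph $G'$ every arc lies in at least $C$ directed triangles. The key step is that $G'$ has no transitive triangle. Indeed, given a transitive triangle $a\to b$, $b\to c$, $a\to c$ in $G'$, we hang pairwise vertex-disjoint copies of $\CC3$ on the arcs of short directed paths through $\{a,b,c\}$ and splice them as in the proof of Theorem~\ref{thm:order_of_magnitude}, using the chord $a\to c$ as a length-$1$ link in place of a length-$2$ one; a count of the attainable cycle lengths, in the spirit of Corollary~\ref{cor:coin_problem}, shows that $\ell$ itself is attainable once $3\nmid\ell$ and $\ell>6$, and since only $O(\ell)$ vertices are used all choices can be kept distinct (this is where $C$ large is used). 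This produces a $\CC\ell$ in $G'\subseteq G$, a contradiction. Having no transitive triangle forces every vertex to have at most $2$ neighbors on any directed path on at most $3$ vertices (the two ``patterns'' are checked by hand), hence at most $\frac{2\cdot3}{3}=2$ neighbors on any $\CC3$; Lemma~\ref{lem:ck_optimality} with $k=d=3$ then bounds the triangles of $G'$ by $\frac n3(\frac n3)^2=\frac{n^3}{27}$, and adding back the lost $O(n^2)$ triangles yields $\ex(n,\CC3,\CC\ell)\le\frac{n^3}{27}+O(n^2)$.

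For the upper bound in (a), where $\ell\in\{4,5\}$, take an extremal $G$ and let $G'$ be its spanning subgraph of arcs lying in a $\CC3$; no triangle is lost. One checks that $G'$ has no transitive triangle: given $a\to b$, $b\to c$, $a\to c$ in $G'$, each arc lies in a $\CC3$, and a short case analysis produces a $\CC4$ (if $\ell=4$) or a $\CC5$ (if $\ell=5$) on at most five of the vertices involved. Consequently every out- and in-neighborhood of $G'$ is independent, no vertex is adjacent to all three vertices of a $\CC3$ (else, after accounting for the orientation, a transitive triangle or a $\CC4$ appears), and the underlying undirected graph of $G'$ is locally bipartite, so $e(G')\le\lfloor n^2/3\rfloor$. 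If $3\mid n$, Lemma~\ref{lem:ck_optimality} with $k=d=3$ immediately gives at most $n^3/27=\lceil n/3\rceil\lceil(n-1)/3\rceil\lceil(n-2)/3\rceil$ triangles. If $3\nmid n$, we induct on $n$ (small cases by hand): by $e(G')\le\lfloor n^2/3\rfloor$ some vertex $v$ has $d^+(v)+d^-(v)\le\lfloor 2n/3\rfloor$, the triangles through $v$ biject with arcs from $N^+(v)$ to $N^-(v)$ and so number at most $d^+(v)d^-(v)\le\lceil(n-1)/3\rceil\lceil(n-2)/3\rceil$, and deleting $v$ and applying the induction hypothesis to the $\CC\ell$-free graph $G-v$ closes the step (the two ceilings combine to $\lceil n/3\rceil\lceil(n-1)/3\rceil\lceil(n-2)/3\rceil$).

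The hardest points are the length-bookkeeping in (b) --- producing a $\CC\ell$ of exactly the right length from one transitive triangle while keeping all vertices distinct --- and, in (a), pairing the transitive-triangle-freeness of $G'$ (delicate when $\ell=5$) with the edge bound $e(G')\le\lfloor n^2/3\rfloor$ and checking the vertex-deletion inequality is sharp enough to close the induction; all of these come down to extracting the precise amount of rigidity forced by forbidding a short directed cycle.
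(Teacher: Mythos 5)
Your proof is correct, and both parts take a genuinely different route from the paper. Let me compare them.

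In part~(a), both you and the paper first argue that a $(3,\ell)$-cleared graph for $\ell\in\{4,5\}$ is $\TT{3}$-free (the paper does this in Observation~\ref{obs:cleanup_t3}, your short case analysis is the same in spirit), and both then observe that this makes the underlying undirected graph $K_4$-free. At that point the paper simply cites the classical Erd\H os result that the number of triangles in an $n$-vertex $K_4$-free graph is maximized by the balanced complete tripartite graph, and notes that the blow-up can be oriented so that every triangle becomes a~$\CC{3}$. You instead re-derive the exact count by a vertex-deletion induction, using the Tur\'an edge bound $e(G')\le\lfloor n^2/3\rfloor$ to find a vertex $v$ with $d^+(v)+d^-(v)\le\lfloor 2n/3\rfloor$, and verifying that $d^+(v)d^-(v)\le\lceil\frac{n-1}{3}\rceil\lceil\frac{n-2}{3}\rceil$ and that the ceilings recombine. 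That argument closes correctly; it is more elementary and self-contained, at the cost of the degree-sum calculation and a few hand-checked base cases. One cosmetic remark: you write that a vertex adjacent to all three vertices of a $\CC{3}$ forces ``a transitive triangle or a $\CC{4}$''; in fact it always forces a transitive triangle, so the alternative is not needed, though it is harmless.

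In part~(b), the paper takes the opposite tack from yours. After pruning arcs in $\le\ell$ triangles (you prune arcs in $<C(\ell)$ triangles; same effect), the paper inductively shows $G$ cannot contain $\CC{\ell-3m}$ for any $m\ge 0$ with $\ell-3m\ge 4$, by extending a shorter cycle: replace one arc $vw$ with a path $v\,y\,x\,z\,w$ obtained from three pairwise-fresh triangles. Driving $m$ to its maximum shows $G$ is $\CC{4}$-free or $\CC{5}$-free, which puts $G$ in the hypotheses of part~(a) and finishes. You instead show directly that $G'$ is $\TT{3}$-free, by building a closed walk of the needed length $\ell$ out of the transitive triangle: the $\TT{3}$ plus the triangles on its arcs give closed walks of lengths $3$, $4$ and $5$, and the same arc-replacement trick adds $3$ each time, so every $\ell\ge 4$ with $3\nmid\ell$ is reachable, with distinct vertices guaranteed since each arc sits in $\ge C(\ell)$ triangles. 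You then feed $\TT{3}$-freeness into Lemma~\ref{lem:ck_optimality} with $k=d=3$ to get $n^3/27$. Both routes are valid; the paper's version reuses part~(a) as a black box and avoids re-verifying the $\TT{3}\Rightarrow\CC{\ell}$ arithmetic, while yours shortcuts through Lemma~\ref{lem:ck_optimality} and does not rely on the part~(a) machinery at all. The length bookkeeping you gesture at (base lengths $4$ and $5$, then add $3$) is exactly right, though it deserves to be spelled out if this were the final write-up.
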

\begin{proof}
In both cases, the lower bound is achieved in a balanced blow-up of $\CC{3}$.

To prove the upper bound for \textit{a}), consider an extremal graph $G$. Since for $\ell=4$ and $\ell=5$ every homomorphic image of $\CC{\ell}$ is a copy of $\CC{\ell}$, we may assume that $G$ is $(3, \ell)$-cleared without removing any $\CC{3}$. 

Knowing this, the following claim implies part \textit{a}) of the theorem. 

\begin{clm}\label{clm:t3-free}
$\ex(n, \CC{3}, \TT{3}) = \left\lceil\frac{n}{3}\right\rceil\left\lceil\frac{n-1}{3}\right\rceil\left\lceil\frac{n-2}{3}\right\rceil$.
\end{clm}
\begin{proof}
Consider the undirected graph $\widetilde{G}$ obtained from $G$ by removing the orientation of all arcs. Note that $\widetilde{G}$ does not contain $K_4$, because every orientation of edges of~$K_4$ contains at least one copy of $\TT{3}$. Therefore, the number of copies of~$\CC{3}$ in $G$ is upper bounded by the number of copies of~$K_3$ in an undirected graph not containing~$K_4$. The latter was proven by Erd\H os \cite{erdos1962} to achieve the maximum when $\widetilde{G}$ is a~balanced blow-up of $K_3$. Since the edges of a blow-up of $K_3$ can be oriented in a way that each $K_3$ becomes $\CC{3}$, the claim follows.
\end{proof}

As we noticed earlier in Observation \ref{obs:cleanup_small_cycles}, a $(3,\,3k+4)$-cleared graph does not contain $\CC{4}$ and a $(3,\,3k+5)$-cleared graph does not contain $\CC{5}$, so the proven part \textit{a}) already implies that for $3 \nmid \ell$, $\ex(n, \CC{3}, \CC{\ell}) = \frac{n^3}{27} + o(n^3)$. However, the error term can be improved to $O(n^2)$ as stated in part~\textit{b}), which we will prove now.

Let $G$ be any (non-cleared) oriented graph on $n$ vertices that does not contain $\CC{\ell}$ and maximizes the number of copies of~$\CC{3}$. Perform the following procedure on $G$ -- as long as there exists an arc which is contained in at most $\ell$ copies of~$\CC{3}$, remove this arc. Note that in this way, we remove at most $\binom{n}{2}\ell = O(n^2)$ copies of~$\CC{3}$.

\begin{clm}
For every integer $m \geq 0$ such that $\ell-3m \geq 4$, graph $G$ does not contain $\CC{\ell-3m}$.
\end{clm}

\begin{proof}
We proceed by induction on $m$. The statement is true for $m=0$, because $G$ does not contain $\CC{\ell}$. For $m \geq 1$, let $C$ be any copy of $\CC{\ell-3m}$ in $G$ and $vw$ an arc of $C$. Since $vw$ is contained in at least $\ell$ copies of $\CC{3}$, there exists a~vertex $x \in V(G) \setminus V(C)$ such that $vwx$ is a~copy of $\CC{3}$. Using the same reasoning, we can find distinct vertices $y, z \in V(G) \setminus V(C)$ such that $xvy$ and $xzw$ are copies of $\CC{3}$. But then $C$ with arc $vw$ replaced by a directed path $vyxzw$ is a~copy of $\CC{\ell-3(m-1)}$ contradicting the induction assumption. 
\end{proof}

\begin{figure}[ht]
    \centering
    \includegraphics[width=0.25\textwidth]{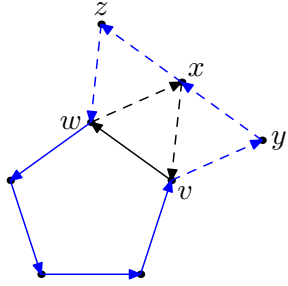}
    \caption{Example for $\ell = 8$, $m = 1$}
\end{figure}

This claim shows that graph $G$ satisfies the assumptions of part \textit{a}) of the theorem, hence
$\ex(n, \CC{3}, \CC{\ell}) \leq \frac{n^3}{27} + O(n^2).$ \end{proof}

\subsubsection{Maximizing $\CC{4}$}

In contrast to the case of $\CC{3}$, when we maximize the number of copies of $\CC{4}$, an extremal construction different than a blow-up of a cycle (or an orientation of a complete bipartite graph as for the case of large $\ell$) can occur.

An iterated blow-up of a graph $H$ is a graph obtained in the following procedure. We take a balanced blow-up of $H$, then place a balanced blow-up of $H$ on the vertices of each blob, and continue further placing balanced blow-ups of $H$ inside new blobs, until it is possible (see Figure~\ref{fig:Iterated}). 

\begin{figure}[H]
    \centering
    \includegraphics[width=0.22\textwidth]{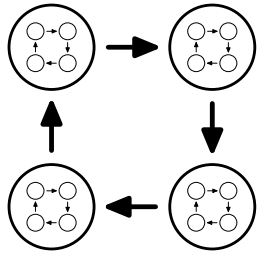}
    \caption{An iterated blow-up of $\CC{4}$.}\label{fig:Iterated}
\end{figure}

\begin{theorem}\label{thm:ex_c4_c3}
The value of $\ex(n, \CC{4}, \CC{3})$ is achieved in the iterated blow-up of $\CC{4}$. In particular,
$$
\ex(n, \CC{4}, \CC{3}) = 
\frac{n^4}{4^4 - 1} + o(n^4).
$$
\end{theorem}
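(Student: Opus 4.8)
The plan is to establish the upper bound $\ex(n, \CC{4}, \CC{3}) \le \frac{n^4}{4^4-1} + o(n^4)$ matching the construction, using a self-referential (iterated) counting argument together with the structural results already available for $(4,\ell)$-cleared graphs. First I would take an extremal $\CC{3}$-free graph $G$ on $n$ vertices and pass, via Observation~\ref{homomorphic_img_removal}, to a $(4,3)$-cleared graph losing only $o(n^4)$ copies of $\CC{4}$; this lets me assume $G$ contains no homomorphic image of $\CC{3}$, i.e.\ no closed directed walk of length $3$, which in particular rules out $\CC{3}$ and $\TT{3}$ (the latter being exactly $\Cinv{3}$). So the underlying undirected graph $\widetilde G$ is $K_4$-free, hence by Turán's theorem has a balanced tripartition into independent-ish parts up to $o(n^2)$ edges; but more importantly $G$ itself, being triangle-free in both the directed and the transitive sense, has a very rigid local structure.

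The heart of the argument is a recursive bound. I would show that any $\CC{3}$-free oriented graph $H$ on $m$ vertices satisfies $c_4(H) \le \max\bigl\{ c_4(H_1) + c_4(H_2) + c_4(H_3) + c_4(H_4) + (\text{cross terms}),\ \binom{m}{4}\text{-type blow-up count}\bigr\}$ where the optimum of the recursion $f(m) = \frac{m^4}{255}$ is exactly the fixed point of ``four scaled copies plus a blow-up of $\CC{4}$ on the quotient.'' Concretely: since $\widetilde G$ is $K_4$-free, one can find a vertex partition $V = V_1 \cup V_2 \cup V_3 \cup V_4$ (obtained from an optimal weighting argument à la the flag-algebra/Zykov-symmetrization or simply by analyzing which arcs lie in many $\CC{4}$'s) such that arcs within each $V_i$ contribute to $\CC{4}$'s only through sub-$\CC{4}$'s entirely inside one $V_i$, while arcs between the parts behave like a blow-up of $\CC{4}$ (any other cross pattern between three or more parts would, together with the directed paths of length $3$ guaranteed by $(4,3)$-clearedness between endpoints of each arc, produce a homomorphic image of $\CC{3}$ — i.e.\ a closed walk of length $3$ — which is forbidden). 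Then every copy of $\CC{4}$ either lies inside a single $V_i$, giving the $\sum c_4(G[V_i])$ term, or visits at least two parts, in which case — because the cross-structure is a $\CC{4}$-blow-up — it must visit the parts in cyclic order, contributing at most $\prod |V_i|$ plus lower-order terms from copies touching exactly two or three parts (these are $O(n^3)$). Maximizing $\sum_i f(|V_i|) + \prod_i |V_i|$ over $\sum |V_i| = n$ with $f(m) = \frac{m^4}{255}$ gives $|V_i| = n/4$ and total $\frac{4}{255}(n/4)^4 + (n/4)^4 = (n/4)^4 \cdot \frac{259}{255}$… which should instead collapse to $\frac{n^4}{255}$ once one sets up the recursion correctly (the blow-up term is $\prod |V_i| = (n/4)^4$ and the fixed point equation $f(n) = 4 f(n/4) + (n/4)^4$ with $f(n) = \beta n^4$ gives $\beta = \tfrac{\beta}{64}+\tfrac{1}{256}$, so $\beta = \tfrac{1}{255}$), yielding $c_4(G) \le \frac{n^4}{255} + o(n^4)$ by induction on $n$.

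The main obstacle I anticipate is making the partition step rigorous: one must argue that an (almost-)extremal $\CC{3}$-free $G$ genuinely decomposes, up to $o(n^4)$ copies of $\CC{4}$, into a $\CC{4}$-blow-up of four parts each inducing a $\CC{3}$-free graph — i.e.\ that no ``mixed'' cross-configuration (like a blow-up of $\CC{4}$ where one class is further split non-cyclically, or stray arcs creating short closed walks) can survive. This is where the $(4,3)$-cleared hypothesis does the work: every arc $xy$ sits in some $\CC{4}$, hence there is a directed path of length $3$ from $x$ to $y$, so any arc $yx$ in the ``wrong'' direction, or any arc joining non-consecutive parts, would close up into a directed walk of length $3$ or $6$ — both homomorphic images of $\CC{3}$ — contradiction. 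Turning this local rigidity into a clean global partition (handling the $o(n^2)$ exceptional arcs and the vertices in no large $\CC{4}$) and then running the induction on $n$ with the correct base case and error bookkeeping is the technical crux; the extremal recursion and its fixed point $\frac{1}{4^4-1}$ are then a routine optimization.
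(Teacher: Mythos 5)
Your route is genuinely different from the paper's. The paper observes that in a $\CC{3}$-free oriented graph every copy of $\CC{4}$ is automatically induced (a chord of a directed $4$-cycle would close a directed triangle), so $\ex(n,\CC{4},\CC{3})$ equals the maximum number of induced copies of $\CC{4}$, and then it simply cites the known inducibility result for $\CC{4}$. You instead try to re-derive that inducibility bound from scratch via a recursive partitioning argument.

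As written, however, the proposal has a fatal gap at the outset. You assert that passing to a $(4,3)$-cleared graph forbids any closed directed walk of length $3$, ``which in particular rules out $\CC{3}$ and $\TT{3}$,'' and from there that $\widetilde G$ is $K_4$-free by a Tur\'an-type argument. Both claims are false. The only homomorphic image of $\CC{3}$ in an oriented graph is $\CC{3}$ itself; $\TT{3}=\Cinv{3}$ has one arc reversed, contains no closed directed walk, and is not excluded. (The paper's Observation~\ref{obs:cleanup_t3} rules out $\TT{3}$ in $(k,k+1)$- and $(k,2k-1)$-cleared graphs, and $(4,3)$ is neither.) Likewise, a $\CC{3}$-free oriented graph can contain $K_4$ in its underlying undirected graph, since the transitive tournament on four vertices is acyclic. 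Indeed your own candidate extremal graph, the iterated blow-up of $\CC{4}$, contains many copies of $\TT{3}$ and many transitive $K_4$'s (take two vertices in distinct sub-blobs of $V_1$, a third nested deeper in one of them, and a fourth in $V_2$). So the step on which the whole recursion rests --- that an extremal $G$ must split into four parts with the cross-structure of a pure $\CC{4}$-blow-up, all ``mixed'' configurations being forbidden by closed walks of length $3$ or $6$ --- is not supported, and the resulting decomposition does not exist in the extremal example itself. A separate small issue: your own fixed-point equation $\beta=\beta/64+1/256$ gives $\beta=1/252$, not $1/255$, so the arithmetic at the end also needs care.
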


\begin{proof}
In any graph not containing $\CC{3}$, each copy of~$\CC{4}$ is induced, thus the number of copies of $\CC{4}$ is upper bounded by the maximum number of induced copies of $\CC{4}$. The latter is claimed in~\cite{directed_cfour_density} (see also \cite{inducibility}) to be maximized in the iterated blow-up of $\CC{4}$.
\end{proof}

\begin{theorem}\label{thm:ex_c4_cl}
If $\ell > 4$ and $4 \nmid \ell$, then $\ex(n, \CC{4}, \CC{\ell}) = 
\left(\frac{n}{4}\right)^4 + o(n^4)$.
\end{theorem}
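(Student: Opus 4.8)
The plan is to follow the $(k,\ell)$-cleared framework of Section~\ref{sec:dense_case}. The lower bound is, as usual, a balanced blow-up of $\CC{4}$, whose directed cycles all have length divisible by $4$ and which has $\left(\frac n4\right)^4 + o(n^4)$ copies of $\CC{4}$. For the upper bound I start from an extremal graph, delete $o(n^4)$ copies of $\CC{4}$ to obtain a $(4,\ell)$-cleared graph $G$, and observe that here the number $d$ from Theorem~\ref{thm:general_thm} is always $4$. So it is enough to verify the hypothesis of Lemma~\ref{lem:ck_optimality} for $k=d=4$, namely that every vertex has at most $\frac{2k}{d}=2$ neighbors on any copy of $\CC{4}$; by that lemma this yields at most $\left(\frac n4\right)^4$ copies of $\CC{4}$. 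Exactly as in the case $d=4$ of the proof of Theorem~\ref{thm:general_thm}, this hypothesis is implied by $G$ being free of both $\CC{3}$ and $\TT{3}$: a vertex with three neighbors on a copy of $\CC{4}$ is adjacent to both ends of one of its arcs, and the two arcs from the vertex to those ends form, depending on their orientation, either a $\CC{3}$ or a $\TT{3}$ through the vertex.

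Next I would rule out these two configurations. For $\CC{3}$: by Observation~\ref{obs:cleanup_small_cycles} a $(4,3x+4y)$-cleared graph has no $\CC{3}$ whenever $x\ge1$, and every $\ell>4$ with $4\nmid\ell$ and $\ell\ne5$ is of that form, so $G$ has no $\CC{3}$ unless $\ell=5$. For $\TT{3}$: a $\TT{3}$ in $G$ produces, together with copies of $\CC{4}$ through its three arcs, intersecting homomorphic images of $\CC{4}$, $\CC{5}$ and $\CC{7}$ (the latter two as in Observation~\ref{obs:cleanup_t3}), hence a homomorphic image of $\CC{m}$ for every $m$ in the numerical semigroup generated by $4,5,7$, which contains $4$, $5$ and all integers at least $7$; since $\ell>4$, this gives a forbidden homomorphic image of $\CC{\ell}$ unless $\ell=6$. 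Thus for every admissible $\ell\notin\{5,6\}$ Lemma~\ref{lem:ck_optimality} applies and the theorem follows (and for $\ell\ge18$ it is already contained in Theorem~\ref{thm:general_thm}).

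It remains to treat $\ell=5$ and $\ell=6$, where the homomorphic-image argument leaves open a $\CC{3}$, respectively a $\TT{3}$. Here my plan is to argue spectrally. Let $M$ be the adjacency matrix of $G$, with complex eigenvalues $\lambda_1,\dots,\lambda_n$. In a $\CC{2}$-free oriented graph every closed walk of length $4$ is a copy of $\CC{4}$ traversed four times, so the number of copies of $\CC{4}$ equals $\frac14\operatorname{tr}(M^4)=\frac14\sum_i\lambda_i^4$. One always has $\operatorname{tr}(M)=\operatorname{tr}(M^2)=0$. If $\ell=5$ then $G$ has no $\CC{5}$, and a closed walk of length $5$ in a $\CC{2}$-free oriented graph is a copy of $\CC{5}$, so also $\operatorname{tr}(M^5)=0$. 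If $\ell=6$ then $G$ has no $\CC{3}$ and no $\CC{6}$, and a short case check shows that an oriented graph with no $\CC{2}$, $\CC{3}$ or $\CC{6}$ has no closed walk of length $6$ at all, so $\operatorname{tr}(M^3)=\operatorname{tr}(M^6)=0$. In both cases these identities are precisely those satisfied by the eigenvalues $\tfrac n4,\tfrac{in}4,-\tfrac n4,-\tfrac{in}4$ (and zeros) of a balanced blow-up of $\CC{4}$, for which $\sum_i\lambda_i^4=\frac{n^4}{64}$. Using the vanishing traces, the Perron--Frobenius structure of $M$ (as exploited in Lemma~\ref{lem:knn_rand_optimal}), and if necessary a preliminary reduction --- in the spirit of Lemma~\ref{lem:bipartite_optimal} --- to the case in which $G$ is bipartite (obtained by bounding from below the minimum degree of the underlying undirected graph and ruling out short odd closed walks), I would conclude $\operatorname{tr}(M^4)\le\frac{n^4}{64}+o(n^4)$, hence at most $\left(\frac n4\right)^4+o(n^4)$ copies of $\CC{4}$.

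I expect this last, spectral step for $\ell\in\{5,6\}$ to be the main obstacle: for these small values the destabilizing configurations cannot be killed by cycle homomorphisms, so one must genuinely use the higher trace identities together with the fact that $M$ comes from an oriented graph --- note that the constraints $\operatorname{tr}(M^j)=0$ alone are satisfied by spectral configurations with $\sum_i\lambda_i^4$ far above $\frac{n^4}{64}$, so the argument must rely on the additional rigidity of adjacency matrices of oriented graphs. An alternative route, which I would keep in reserve, is to show directly that in an extremal $(4,5)$- or $(4,6)$-cleared graph all but $o(n)$ vertices have at most two neighbors on every copy of $\CC{4}$, which again suffices through a robust version of Lemma~\ref{lem:ck_optimality}.
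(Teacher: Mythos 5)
Your reduction to $\CC{3}$-freeness and $\TT{3}$-freeness and the application of Lemma~\ref{lem:ck_optimality} is correct for $\ell\notin\{5,6\}$, and matches the paper's structure (the paper reduces via Observation~\ref{obs:cleanup_small_cycles} to $\ell\in\{5,6,7\}$). However, there are two issues, one minor and one serious.

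The minor issue is that you are too pessimistic about $\ell=5$. You do establish that a $(4,5)$-cleared graph contains no $\TT{3}$, and $\TT{3}$-freeness \emph{alone} already gives the hypothesis of Lemma~\ref{lem:ck_optimality}: if a vertex $v$ had three neighbors on a $\CC{4}$, then two of the four arcs of the cycle have both endpoints adjacent to $v$ and they share a vertex, say $v_j$; if neither of the two resulting triangles were a $\TT{3}$ they would both be copies of $\CC{3}$, forcing both the arc $v_j\to v$ and the arc $v\to v_j$, which is impossible in an oriented graph. So no $\CC{3}$-freeness is needed, your spectral detour for $\ell=5$ is superfluous, and the paper indeed handles $\ell=5$ (and $\ell=7$) exactly this way via Observation~\ref{obs:cleanup_t3}.

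The serious gap is $\ell=6$. Your spectral sketch does not go through as described. The trace identities $\operatorname{tr}(M^j)=0$ for $j\in\{1,2,3,6\}$ alone are, as you yourself observe, not enough to bound $\operatorname{tr}(M^4)$. The additional tool you propose, a reduction to the bipartite case in the spirit of Lemma~\ref{lem:bipartite_optimal}, cannot apply here: the extremal construction for $\ell=6$ is a balanced blow-up of $\CC{4}$, which is \emph{not} bipartite, so you cannot hope to prove (nor should you, by Claim~\ref{clm:no_short_odd_cycles}-style arguments) that the extremal graph has no short odd cycles or that it is bipartite. There is no obvious way to convert the Perron--Frobenius structure plus these vanishing traces into the needed bound. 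The paper instead handles $\ell=6$ with an ad hoc modification of the Kr\'al'--Norin--Volec counting scheme from Lemma~\ref{lem:ck_optimality}: because a $(4,6)$-cleared graph has no $\CC{3}$, every copy of $\CC{4}$ is induced, and one then samples its four vertices in the order $v_0,v_2,v_1,v_3$ (opposite pair first, then the other pair) with appropriately redefined sets $A_i(D)$, and shows via a vertex-contribution claim using $\CC{3}$- and $\CC{6}$-freeness that $\sum_{i,j}n_{i,j}\le 3n$, which yields the bound $\left(\frac n4\right)^4$. This is the key missing ingredient in your proposal.
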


\begin{proof}

The balanced blow-up of $\CC{4}$ provides the lower bound $\left(\frac{n}{4}\right)^4 + o(n^4)$. The rest of the proof is dedicated to the upper bound.

Consider any extremal $(4, \ell)$-cleared graph $G$.
By Observation \ref{obs:cleanup_small_cycles} we know that it is enough to consider $5 \leq \ell \leq 7$, because for any larger $\ell$, we can use the fact that if $G$ contains $\CC{\ell - 4i}$, where $i$ is chosen so that $5 \leq \ell - 4i \leq 7$, then it contains a homomorphic image of $\CC{\ell}$. 

\paragraph{Cases $\ell = 5$ and $\ell = 7$.}

By Observation~\ref{obs:cleanup_t3}, any $(4,\,5)$-cleared graph and any $(4,\,7)$-cleared graph does not contain $\TT{3}$. This implies that each vertex in $V(G)$ has at most $2$ neighbors in any $\CC{4}$. Therefore, by Lemma \ref{lem:ck_optimality} for $k=d=4$ we obtain the desired bound.

\paragraph{Case $\ell = 6$.} 

Since any $(6,\, 4)$-cleared graph does not contain $\CC{3}$, all copies of $\CC{4}$ in $G$ must be induced. We shall use the same framework as in the proof of Lemma \ref{lem:ck_optimality}, but with a~different definition of a~good sequence and sets $A_i(D)$. More precisely, let $v_0v_1v_2v_3$ be a copy of $\CC{4}$. We consider good sequences of the form $D = (z_i)_{i=0}^3$, where $z_0 = v_0$, $z_1 = v_2$, $z_2 = v_1$, and $z_3 = v_3$, hence sets $A_i(D)$ are defined as follows:
\begin{align*}
A_0(D) & = V(G),\\
A_1(D) & = N^+_2(z_0) \cap N^-_2(z_0),\\ 
A_2(D) & = N^+(z_0) \cap N^-(z_1),\\
A_3(D) & = N^-(z_0) \cap N^+(z_1).
\end{align*}
Furthermore, we denote $w(D) = \prod_{i=0}^3 |A_i(D)|^{-1}$.

Similarly as in the proof of Lemma \ref{lem:ck_optimality}, for a copy $\mathcal{D}$ of $\CC{4}$ in $G$ on vertices $v_0, v_1, v_2, v_3$, we define $D_j = (v_j, v_{j+2}, v_{j+1}, v_{j+3})$ (with indices taken modulo $4$) as the $j$-th good sequence corresponding to $\mathcal{D}$. Observe that $\sum_\mathcal{D} \sum_{j=0}^3 w(D_j) \leq 1$. Letting $n_{i,j} = \abs{A_i(D_j)}$ and using the same reasoning as in the proof of Lemma \ref{lem:ck_optimality} we obtain
$$
\frac14 \sum_{j=0}^3 w(D_j) \geq \frac1n \left( \frac{1}{12} \sum_{j=0}^3 \sum_{i=1}^3 n_{i,j} \right)^{-3}.
$$

We proceed to the main part of this proof.

\begin{clm}
The following inequality holds
\begin{displaymath}
\sum_{j=0}^3 \sum_{i=1}^3 n_{i,j} \leq 3n,
\end{displaymath}
with equality if and only if each vertex of $G$ is in in-neighborhood and out-neighborhood of two non-adjacent vertices from $\{v_0, v_1, v_2, v_3\}$.
\end{clm}

\begin{proof}
It is enough to show that the contribution of any vertex $w \in V(G)$ to the above sum is at most 3, and that such a~contribution can only occur if $w$ is in in-neighborhood and out-neighborhood of two non-adjacent vertices from $\{v_0, v_1, v_2, v_3\}$.

Assume that $w \in A_1(D_i) \cap A_1(D_j)$ for different $i, j \in \{0,1,2,3\}$. If the difference between $i$ and $j$ is even, then we have a homomorphic image of $\CC{6}$ (Figure \ref{fig:sub1}), which is a~contradiction. Hence, we may assume that $j \equiv i+1 \pmod 4$. If $w \in A_2(D_m)$ or $w \in A_3(D_m)$ for any $m \in \{0,1,2,3\}$, then $w$ is adjacent to $v_i$ or $v_j$, but then we find a~copy of $\CC{3}$ in $G$ (Figure \ref{fig:sub2}). Therefore, the contribution of $w$ in this case is at most $2$.

\begin{figure}[h!]
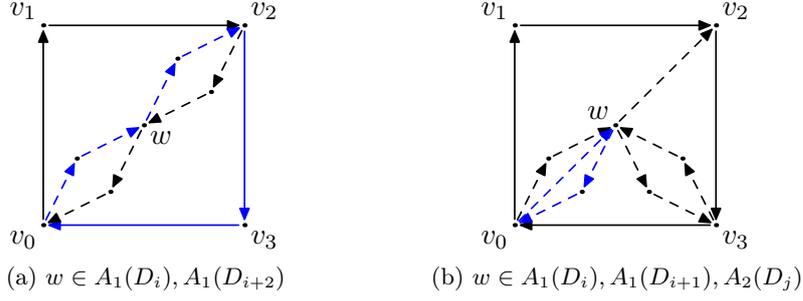

\centering
\begin{subfigure}{.4\textwidth}
  \centering
  \includegraphics[width=.6\linewidth]{drawings/c4_without_c6_1.mps}
  \caption{$w\in A_1(D_i), A_1(D_{i+2})$}
  \label{fig:sub1}
\end{subfigure}
\begin{subfigure}{.2\textwidth}
\end{subfigure}
\begin{subfigure}{.4\textwidth}
  \centering
  \includegraphics[width=.6\linewidth]{drawings/c4_without_c6_2.mps}
  \caption{$w\in A_1(D_i), A_1(D_{i+1}), A_2(D_j)$}
  \label{fig:sub2}
\end{subfigure}
\caption{Forbidden configurations when $w$ belongs to two different sets $A_1$.}
\label{fig:oomx}
\end{figure}

Note also that if $w \in A_2(D_i)$, then $w \notin A_2(D_{i+2})$, as well as $w\notin A_2(D_{i+1}), A_2(D_{i+3})$, because otherwise it creates a directed triangle. Summarizing, $w \in A_1(D_i)$ can happen for at most one $i \in \{0,1,2,3\}$, and the same holds for $A_2$ and, by symmetry, for $A_3$. This implies that the contribution of $w$ can be at most $3$ and the claim follows.
\end{proof}

Combining this result with the previous inequality gives
$$
1 \geq \sum_\mathcal{D} \sum_{j=0}^3 w(D_j) \geq \sum_\mathcal{D} \frac{4}{n} \left( \frac{1}{12} \cdot 3n \right)^{-3} = M \cdot \frac4n \left( \frac n4 \right)^{-3}, 
$$
where $M$ denotes the number of copies of $\CC{4}$ in $G$.
This implies $M \leq \left( \frac{n}{4} \right)^4$.

In conclusion, $\ex(n, \CC{4}, \{\CC{3}, \CC{6}\}) \leq \left( \frac n4 \right)^4$, which implies that $\ex(n, \CC{4}, \CC{6}) = \left(\frac n4 \right)^4 + o(n^4)$.
\end{proof}

\subsubsection{Maximizing $\CC{5}$}

When we maximize the number of copies of $\CC{5}$, more complex extremal constructions can occur than in the previously considered cases. 
This happens when the forbidden cycle is of length $3$, $4$ or $7$. We present solutions with optimal bounds when the forbidden cycle is $\CC{3}$ in Theorem~\ref{thm:ex_c5_c3} and $\CC{7}$ in Theorem~\ref{thm:c5_without_c7}. We also show that for $\ell \neq 3,4,7$ not divisible by $5$, the maximum is asymptotically attained in a balanced blow-up of $\CC{5}$ (Theorem~\ref{thm:ex_c5_cl} and Theorem~\ref{thm:ex_c5_c8}). 

\begin{theorem}\label{thm:ex_c5_cl}
$\ex(n, \CC{5}, \CC{\ell}) = \left (\frac{n}{5} \right)^5 + o(n^5)$ for all $\ell > 5, \; 5 \nmid \ell, \; \ell \neq 7, 8, 13$. 
\end{theorem}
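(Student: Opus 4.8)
The lower bound is immediate from a balanced blow-up of $\CC{5}$, so the plan is to establish the matching upper bound $\left(\frac n5\right)^5+o(n^5)$ by showing that an extremal $(5,\ell)$-cleared graph $G$ satisfies the hypothesis of Lemma~\ref{lem:ck_optimality} with $k=d=5$, i.e.\ that every vertex has at most $\frac{2k}{d}=2$ neighbors in any copy $C$ of $\CC{5}$. By Observation~\ref{obs:cleanup_small_cycles} (writing $\ell$ in the form $5x+5y$-type combinations with small residues), it suffices to handle finitely many residue classes of $\ell$ modulo $5$; concretely, if $G$ contains $\CC{\ell-5i}$ for the $i$ bringing $\ell-5i$ into a small window, we get a homomorphic image of $\CC{\ell}$, so we may reduce to $\ell\in\{6,9,11,12,14,16,\dots\}$ up to a bounded range — and in fact, since $\ell\neq 7,8,13$, the remaining small cases to treat directly are $\ell=6,9,11,12,14,16$ (and then $\ell\geq 17$ reduces to these via Observation~\ref{obs:cleanup_small_cycles}). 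I would organize the argument by which small ``bad'' configurations $G$ can be shown to avoid.

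\emph{Step 1: forbidding short reversed/near cycles.} For each relevant $\ell$, I would show $G$ contains no $\Cinv{m}$ for $m\in\{3,4\}$ and no $\CC{m}$ for $m\in\{3,4\}$ — exactly as in the three ``Assumption'' paragraphs of the proof of Theorem~\ref{thm:general_thm}. The mechanism is always the same: an arc $xy$ lies in a $\CC{5}$, so a $\Cinv{m}$ yields intersecting homomorphic images of $\CC{5}$ and $\CC{5+m-2}$, hence a homomorphic image of $\CC{5a+(5+m-2)b}$ for all $a,b\geq 0$; a $\CC{m}$ intersecting a $\CC{5}$ yields homomorphic images of $\CC{5a+mb}$. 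One then invokes Corollary~\ref{cor:coin_problem}: provided $\gcd$ of the two lengths divides $\ell$ (which holds because $d=5$ forces $5\nmid\ell$ trivially but we need $\gcd(5,m)=1$ for $m=3,4$ — automatic — and $\gcd(5,5+1)=1$, $\gcd(5,5+2)=1$ — automatic) and $\ell$ exceeds the Frobenius-type bound, a homomorphic image of $\CC{\ell}$ appears, contradiction. Since $\gcd(5,\,5+1)=\gcd(5,\,5+2)=1$ and $\gcd(5,3)=\gcd(5,4)=1$, the divisibility condition is free, and the size condition is a small numeric check for each $\ell\geq 6$ with $\ell\neq 7,8,13$; the excluded values $7,8,13$ are precisely the ones where the combination cannot reach $\ell$.

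\emph{Step 2: from forbidden small subgraphs to the neighbor bound.} Once $G$ has no $\CC{3},\CC{4},\Cinv{3},\Cinv{4}$, any directed path on at most $5$ vertices admits at most $2$ neighbors from any outside vertex — if a vertex had $3$ neighbors on a path of $\leq 5$ vertices it would have two in- or two out-neighbors among $\leq 4$ consecutive path-vertices, producing a $\Cinv{m}$ or $\CC{m}$ with $3\leq m\leq 5$; the $m=5$ case (a genuine $\CC{5}$ through the outside vertex) is not itself forbidden, but it forces a $\CC{5}$ sharing a vertex with the original $\CC{5}$, which by Observation~\ref{obs:cleanup_small_cycles} is a homomorphic image of $\CC{\ell}$ whenever $\ell$ is a suitable combination — true for all our $\ell$. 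Hence a copy $C$ of $\CC{5}$, split into its $5$ vertices viewed as (overlapping) short paths, gives every vertex at most $2$ neighbors in $C$; Lemma~\ref{lem:ck_optimality} then yields $M\leq\frac n5\left(\frac n5\right)^4=\left(\frac n5\right)^5$, finishing the proof after accounting for the $o(n^5)$ cleared copies.

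\emph{Main obstacle.} The delicate point is not the overall strategy but verifying, uniformly across the small cases $\ell=6,9,11,12,14,16$ (and confirming the reduction for $\ell\geq 17$), that the Frobenius-type inequalities of Corollary~\ref{cor:coin_problem} actually hold — in particular pinning down exactly why $\ell=7,8,13$ must be excluded (there, $\ell$ is not expressible as $5a+6b$ or $5a+7b$ or $5a+3b$ with the needed nonnegativity, so the contradiction in Step~1 fails and indeed genuinely different extremal constructions arise, as handled in Theorems~\ref{thm:ex_c5_c3} and~\ref{thm:c5_without_c7} and the $\CC{13}$ case elsewhere). I would also need to double-check the $m=5$ subtlety in Step~2: a vertex with three neighbors on a $\CC{5}$ could a priori form only another $\CC{5}$ rather than a shorter or reversed cycle, and ruling this out requires that a $\CC{5}$ meeting a $\CC{5}$ in one vertex be a forbidden homomorphic image of $\CC{\ell}$ — i.e.\ that $\ell=5x+5y$ is solvable in nonnegatives with the shared-vertex count working out, which Observation~\ref{obs:cleanup_small_cycles} gives for $\ell$ of the form $5x$ with $x\geq 2$; for $\ell$ not divisible by $5$ one instead uses that two $\CC{5}$'s sharing one vertex contain a homomorphic image of $\CC{5+5}=\CC{10}$ and more generally of $\CC{5a}$, so this does not directly forbid it — the cleaner route is to note that a vertex with $\geq 3$ neighbors on $C$ necessarily has two of the same orientation among some four consecutive vertices of $C$, genuinely producing a $\Cinv{m}$ or $\CC{m}$ with $m\leq 4$, which Step~1 already killed. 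Making that counting argument airtight is where I would spend the most care.
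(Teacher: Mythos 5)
Your route is genuinely different from the paper's: you feed the triangle-free structure into Lemma~\ref{lem:ck_optimality}, whereas the paper, after establishing that a $(5,\ell)$-cleared graph is $\TT{3}$-free (via Observation~\ref{obs:cleanup_t3}, which yields homomorphic images of both $\CC{6}$ \emph{and} $\CC{9}$, hence hom.\ images of $\CC{5x+6y+9z}$ — this is exactly how the excluded set $\{7,8,13\}$ arises) and $\CC{3}$-free, simply observes that the underlying undirected graph is triangle-free and invokes the solved Erd\H{o}s pentagon problem: at most $\left(\frac n5\right)^5$ copies of $C_5$ in a triangle-free graph, hence at most that many $\CC{5}$'s. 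Your approach is more self-contained since it reuses the paper's own weighting lemma rather than citing an external result, and it would work — but not as you have written it.

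There are two concrete gaps. First, in Step~1 you derive $\Cinv{3}$-freeness from the combination $5a+(5+m-2)b=5a+6b$ only; this misses $\ell\in\{9,14,19\}$, which \emph{are} in the theorem's scope. You need the second homomorphic image of Observation~\ref{obs:cleanup_t3} (the $\CC{2k-1}=\CC{9}$, obtained by routing through \emph{both} legs of the transitive triangle), after which $5a+6b+9c$ covers every relevant $\ell$. Second, and more seriously, your final ``cleaner route'' in Step~2 rests on $G$ being $\Cinv{4}$-free, but $\Cinv{4}$ only forces homomorphic images of $\CC{5a+7b}$ (and further combinations of $5,7,13,\dots$), none of which hit $\ell=6$ or $\ell=9$ among others; so $\Cinv{4}$ genuinely cannot be forbidden in those cases, and your neighbor bound collapses there. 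The fix is simpler than either of your two attempts: any $3$ of the $5$ cyclic vertices of a $\CC{5}$ contain a \emph{consecutive} pair (since $C_5$ has independence number $2$), and a vertex $w$ adjacent to two consecutive cycle-vertices $v_i\to v_{i+1}$ forms a triangle on $\{w,v_i,v_{i+1}\}$ in the underlying graph no matter how the two arcs at $w$ are oriented. So only $\CC{3}$- and $\Cinv{3}$-freeness are needed; $\CC{4}$ and $\Cinv{4}$ play no role, and the worry about $m=5$ never arises. With that repair, your application of Lemma~\ref{lem:ck_optimality} with $k=d=5$ goes through and gives the same bound as the paper.
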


\begin{proof}
    The lower bound is achieved by a balanced blow-up of $\CC{5}$.

    To prove the upper bound, consider a $(5, \ell)$-cleared graph $G$. Using Observation~\ref{obs:cleanup_t3}, existence of a $\TT{3}$ in a $(5, \ell)$-cleared graph implies existence of homomorphic images of $\CC{6}$ and $\CC{9}$, so $\ell$ cannot be of the form $5x + 6y + 9z$ for non-negative integers $x$, $y$, and $z$. It is easy to verify that the only integers larger than $5$ that cannot be represented in this form are $7$, $8$ and $13$. 
    Furthermore, Corollary~\ref{cor:coin_problem} implies that for every $\ell > 5$ other than $7$, any $(5, \ell)$-cleared graph does not contain~$\CC{3}$. 
    
    Therefore, the underlying undirected graph of $G$ does not contain triangles and the number of $\CC{5}$ in $G$ is upper bounded by the number of $C_5$ in a triangle-free undirected graph on $n$ vertices. This maximum was established in \cite{grzesik2012pentagon} and \cite{c5_without_c3}, and matches the desired bound $\left ( \frac{n}{5} \right )^5$.
\end{proof}

\begin{theorem}\label{thm:ex_c5_c3}
$\ex(n,\CC{5},\CC{3}) = \frac{1}{512}n^5 + o(n^5)$
\end{theorem}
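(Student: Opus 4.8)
The plan is to establish the matching lower and upper bounds separately.

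For the lower bound, the extremal construction is a balanced blow-up of $\CC{4}$ in which each of the four blobs induces a transitive tournament. This graph contains no $\CC{3}$: a transitive tournament has none, and a $\CC{3}$ meeting at least two blobs would project onto a closed directed walk of length $3$ in $\CC{4}$ visiting at least two vertices, which does not exist. To count copies of $\CC{5}$, note that projecting a $\CC{5}$ onto $\CC{4}$ yields a closed directed walk of length $5$, so its winding number is a multiple of $4$; since each arc of the $\CC{5}$ either stays inside a blob (winding $0$) or moves to the next blob (winding $1$), exactly one arc lies inside a blob and the other four join consecutive blobs. Hence a copy of $\CC{5}$ is obtained by choosing the doubled blob, the ordered within-blob arc in it, and one vertex in each of the three remaining visited blobs, giving $4\binom{n/4}{2}(n/4)^3 = 2\left(\frac n4\right)^5 + o(n^5) = \frac{1}{512}n^5 + o(n^5)$ copies, as claimed. (A short optimization over the family ``blow-up of $\CC{m}$ with transitive tournaments in the blobs'' confirms that balanced blobs and $m=4$ are optimal: $m=3$ is forbidden, $m=5$ gives only $(n/5)^5$, larger $m$ even less, and unbalancing the blobs only decreases the count.)

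For the upper bound, I would first pass to a $(5,3)$-cleared extremal graph $G$ obtained from an $n$-vertex $\CC{3}$-free graph with the maximum number of copies of $\CC{5}$ by deleting $o(n^2)$ arcs, which removes only $o(n^5)$ copies of $\CC{5}$; it then suffices to bound the number of copies of $\CC{5}$ in $G$ by $\frac{1}{512}n^5+o(n^5)$. Next I would record the rigidity forced by $\CC{3}$-freeness: in any copy of $\CC{5}$ on $v_0\to v_1\to\dots\to v_4\to v_0$, every present chord $\{v_i,v_{i+2}\}$ must be oriented $v_i\to v_{i+2}$ (else $v_i v_{i+1} v_{i+2}$ is a $\CC{3}$), and no two chords may share a vertex (a chord into $v_i$ and a chord out of $v_i$ together with the arc $v_{i+2}\to v_{i+3}$ form a $\CC{3}$), so a copy of $\CC{5}$ has at most two chords — exactly the configuration realized by the construction. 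More importantly, I would classify, for a vertex $w$ and a copy $C$ of $\CC{5}$, the admissible interaction patterns: although $w$ may be adjacent to as many as four vertices of $C$, $\CC{3}$-freeness forbids many in-/out-combinations (for instance, if $w$ is adjacent to two cyclically consecutive vertices $v_i,v_{i+1}$ of $C$ one cannot have simultaneously $v_{i+1}\to w$ and $w\to v_i$), so each admissible pattern is heavily direction-constrained.

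I would then run the weighting argument of Kr\'al', Norin and Volec, exactly as in Lemma~\ref{lem:ck_optimality} and the case $\ell=6$ of Theorem~\ref{thm:ex_c4_cl}, but with a ``good sequence'' adapted to the extremal structure: for a copy $\mathcal D$ of $\CC{5}$ one samples a vertex, then a vertex at cyclic distance two (the would-be partner in the doubled blob), and then the remaining vertices, so that the sets $A_1(D),\dots,A_4(D)$ are intersections of two in-/out-neighbourhoods (of shapes such as $N^+(z_0)\cap N^-(z_1)$, $N^-(z_0)\cap N^+(z_1)$, or $N^+_2(z_0)\cap N^-_3(z_0)$) rather than single neighbourhoods — this is what can push the bound below what Lemma~\ref{lem:ck_optimality} gives. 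The AM--GM step together with the probabilistic reading $\sum_{\mathcal D}\sum_j w(D_j)\le 1$ then reduces everything to a single inequality bounding $\sum_{i,j}\abs{A_i(D_j)}$ linearly in $n$ with the precise constant making the final count equal $\frac{1}{512}n^5$, which is proved by summing the contribution of each vertex $w$ and invoking the interaction patterns from the previous paragraph. The main obstacle is exactly this per-vertex contribution bound: unlike in Lemma~\ref{lem:ck_optimality}, where one merely counts how many vertices of $C$ are neighbours of $w$, here a vertex can be adjacent to four vertices of a $\CC{5}$, so the bound must exploit the forced directions of those adjacencies and carefully track how $w$ can simultaneously lie in several of the sets $A_i(D_j)$; getting the constant to come out to exactly $\frac{1}{512}$ — possibly by averaging over several compatible sampling orders, as Lemma~\ref{lem:ck_optimality} averages over the weights $w$ and $\overline w$ — is the delicate heart of the argument.
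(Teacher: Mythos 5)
Your lower bound is correct and matches the paper's construction exactly (balanced blow-up of $\CC{4}$ with transitive tournaments inside the blobs), and the count $4\binom{n/4}{2}(n/4)^3 = \frac{1}{512}n^5 + o(n^5)$ is right.

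For the upper bound, however, there is a genuine gap. The paper does not use an entropy/weighting argument at all here — it establishes the upper bound by flag algebras, providing a Flagmatic script that generates and verifies a semidefinite-programming certificate. What you propose is therefore a genuinely different route, but you never actually carry it out: the crucial step, a per-vertex bound on $\sum_{i,j}\abs{A_i(D_j)}$ with constant forcing $\frac{1}{512}$, is described as ``the delicate heart of the argument'' rather than proven. Note also that Lemma~\ref{lem:ck_optimality} cannot apply directly here: its hypothesis (each vertex has at most $\frac{2k}{d}=\frac{5}{2}$ neighbours in any $\CC{5}$) fails in the extremal graph, where a vertex in a blob can have four neighbours in a copy of $\CC{5}$ using that blob's internal arc, and the lemma's conclusion $\frac{n}{5}(\frac{n}{4})^4=\frac{n^5}{1280}$ would be strictly below the construction. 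So you would need a substantively new inequality, not a routine adaptation of the sampling order, and you have not exhibited one; the observations about chord orientations in a $\CC{3}$-free $\CC{5}$, while correct, do not by themselves yield the required per-vertex contribution bound. As written, the upper bound is unproved.
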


\begin{proof}
The lower bound comes from the following construction --- take a balanced blow-up of $\CC{4}$ and add arcs inside each of the blobs so that they become transitive tournaments (see Figure~\ref{fig:constr_c5_c3}). 

    \begin{figure}[ht]
        \centering
        \includegraphics[scale=1.25]{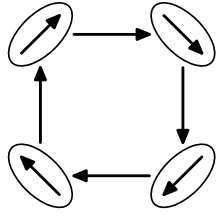}
        \caption{The extremal graph for $\ex(n, \CC{5}, \CC{3})$.}
        \label{fig:constr_c5_c3}
    \end{figure}

The upper bound can be easily obtained using flag algebras. Below, we provide a code for the publicly available software Flagmatic \cite{flagmatic} that can generate and verify the proof.
\end{proof}
\vspace{-2mm}
\lstinputlisting[language=Python, numbers=none, breakatwhitespace=true, basicstyle=\small, backgroundcolor=\color{white!98!black}]{exC5C3.sage}

\begin{theorem}
\label{thm:c5_without_c7}
$\ex(n, \CC{5}, \CC{7}) = \frac{27}{16} \left (\frac{n}{5} \right)^5 + o(n^5)$. 
\end{theorem}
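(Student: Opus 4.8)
The plan is to prove the lower bound by a blow-up construction and the upper bound by flag algebras, passing first through a cleared graph.

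\emph{Lower bound.} The construction I would use is a blow-up of the following $6$-vertex oriented graph $H$, obtained from $\CC{4}$ by substituting a single arc (a transitive tournament on two vertices) into two of its vertices: set $V(H)=\{a_1,b_1,c,a_3,b_3,d\}$ with the substituted arcs $a_1b_1$, $a_3b_3$ and the ``cyclic'' arcs $a_1c,b_1c,ca_3,cb_3,a_3d,b_3d,da_1,db_1$. Every directed cycle of $H$ has length $4$, $5$, or $6$ and passes through $c$ (and through $d$), and $H-c$ is acyclic, so every closed directed walk of $H$ has length in the numerical semigroup generated by $\{4,5,6\}$, which does not contain $7$; hence no blow-up of $H$ contains $\CC{7}$. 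Since any closed walk of length $5$ is a simple $5$-cycle, every $\CC{5}$ in a blow-up of $H$ projects to one of the four directed $5$-cycles of $H$, namely $a_1b_1ca_3d$, $a_1b_1cb_3d$, $a_1ca_3b_3d$, $b_1ca_3b_3d$. Therefore the blow-up of $H$ in which the blobs of $a_1,b_1,a_3,b_3$ have size $t$ and the blobs of $c,d$ have size $s$ contains $4t^3s^2+o(n^5)$ copies of $\CC{5}$; maximizing $4t^3s^2$ subject to $4t+2s=n$ gives $t=\tfrac{3n}{20}$, $s=\tfrac n5$, and value $\tfrac{27}{50000}\,n^5=\tfrac{27}{16}\big(\tfrac n5\big)^5$.

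\emph{Upper bound.} As in the framework of Section~\ref{sec:dense_case}, I would first pass from an extremal graph to a $(5,7)$-cleared graph $G$, discarding only $o(n^5)$ copies of $\CC{5}$; in particular $G$ has no homomorphic image of $\CC{7}$, i.e.\ no closed walk of length $7$. In contrast to Theorems~\ref{thm:ex_c5_cl} and~\ref{thm:ex_c4_cl}, Corollary~\ref{cor:coin_problem} does not let us forbid a short cycle in $G$ here (for instance $7$ is not a non-negative integer combination of $3$ and $5$, so $\CC{3}$ need not be excluded), so no reduction to a known triangle-free counting result is available. Instead I would run a flag algebra computation in the limit of oriented graphs: maximize the density of the closed $5$-walk subject to the density of the closed $7$-walk being $0$, and exhibit a semidefinite-programming certificate (generated and verified with Flagmatic, exactly as for Theorem~\ref{thm:ex_c5_c3}) yielding the matching bound $\tfrac{27}{16}\big(\tfrac n5\big)^5+o(n^5)$.

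\emph{Main obstacle.} The upper bound is the crux. The weighting argument of Lemma~\ref{lem:ck_optimality} is too weak, because in a $(5,7)$-cleared graph a vertex may have three neighbours on some $\CC{5}$ (as it does in the construction above), so that lemma only yields $\big(\tfrac n5\big)^5<\tfrac{27}{16}\big(\tfrac n5\big)^5$. Pinning down the exact constant $\tfrac{27}{16}$ therefore requires a tight flag algebra certificate whose extremal configuration is precisely the $6$-vertex blow-up described above; producing such a rationalizable certificate — or, if a purely combinatorial proof is attempted, controlling the rich local structure that $(5,7)$-cleared graphs may still exhibit — is where the real difficulty lies.
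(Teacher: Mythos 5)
Your lower bound is correct and is, up to relabelling, exactly the construction the paper uses (the paper describes it as an unbalanced blow-up of $\CC{4}$ with two opposite blobs internally oriented as balanced one-way bipartite graphs, giving six blobs of sizes $3n/20,3n/20,n/5,3n/20,3n/20,n/5$); your verification that no closed walk of length $7$ appears and the optimization $4t^3s^2$ with $4t+2s=n$ both check out. Your observation that Lemma~\ref{lem:ck_optimality} is too weak here because a vertex can have three neighbours on a $\CC{5}$ is also correct and well spotted.

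The gap is the upper bound: you propose to delegate it to a Flagmatic semidefinite certificate, but you neither produce nor verify one, and you yourself concede that producing a rationalizable certificate ``is where the real difficulty lies.'' As it stands this is a plan, not a proof. The paper in fact does \emph{not} use flag algebras for this theorem (it only does so for $\ex(n,\CC{5},\CC{3})$); instead it gives a purely combinatorial argument. After passing to a $(5,7)$-cleared graph $G$, it groups vertices into twin classes (blobs with identical in- and out-neighbourhoods), forms the reduced graph $G'$ on these blobs, and argues that one may assume every pair of blobs lies on a common $\CC{5}$ (otherwise merge blobs without losing copies). Using that a $(5,7)$-cleared graph contains no $\Cinv{4}$, it shows that two blobs with a common in- or out-neighbour must be adjacent, and deduces that $G'$ is a $\CC{6}$ with possibly some extra chords; further analysis restricts which chords can occur, leaving two arc-maximal shapes of $G'$, and optimizing the six blob sizes in each yields $\frac{27}{16}(n/5)^5$. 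If you want to complete your write-up along your proposed route, you would need to actually exhibit and verify the SDP certificate; otherwise, the blob-reduction argument above is the missing ingredient you would need to supply.
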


\begin{proof}
    The lower bound is achieved by the following construction shown in Figure \ref{fig:c5_without_c7}  -- take an \emph{unbalanced} blow-up of $\CC{4}$, where arbitrary two of the blobs have size $\frac{3}{10}n$ each and the two other blobs have size $\frac{2}{10}n$ each. Add arcs inside the two larger blobs to create the balanced complete bipartite graphs with all arc directed in the same direction. This forms a graph with $6$ blobs distributed in $4$ groups. Each $\CC{5}$ passes through all the groups and $5$ of the $6$ blobs.

    \begin{figure}[h!]
        \centering
        \includegraphics[scale=1.44]{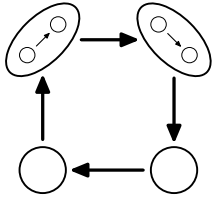}
        \caption{An extremal graph for $\ex(n, \CC{5}, \CC{7})$.}
        \label{fig:c5_without_c7}
    \end{figure}

    To prove the upper bound, consider an extremal $(5,\, 7)$-cleared graph $G$. We say that two vertices $u, v$ of $G$ are \emph{equivalent} if $N^+(u) = N^+(v)$ and $N^-(u) = N^-(v)$. Group vertices of $G$ into equivalence classes (blobs) and let $G'$ be the graph on such blobs (these will correspond to the blobs in our construction) with an arc between the blobs if there is an arc between all vertices in the respective blobs. 

    Clearly, all the vertices in one blob are contained in the same number of $\CC{5}$. Assume that for two different blobs $x$ and $y$ there is no $\CC{5}$ in $G'$ containing both $x$ and $y$ and say that the number of copies of $\CC{5}$ in $G$ containing a vertex in $x$ is at least as large as the number of copies of $\CC{5}$ containing a vertex in $y$. Then by moving all the vertices from $y$ to $x$ we do not decrease the number of copies of $\CC{5}$ in $G$. Therefore, we may assume that for any two blobs, there is a copy of $\CC{5}$ in $G'$ that contains both of them. This implies that any two non-adjacent blobs in $G'$ are connected by a directed path of length $2$ in some direction. 

    Since $G$ is a $(5,\, 7)$-cleared graph, it cannot contain $\Cinv{4}$, as otherwise it contains a homomorphic image of $\CC{7}$. Thus, also $G'$ does not contain $\Cinv{4}$.
    This implies the following claim.
    
    \begin{clm}\label{clm:no_induced_cherries}
    If two blobs have a common out-neighbor or a common in-neighbor, then they are adjacent in $G'$.
    \end{clm}

    \begin{proof}
    Consider a blob $x$ with two out-neighbors $y$ and $z$ in $G'$. If $y$ and $z$ are not adjacent, then there must be a path of length $2$ between $y$ and $z$, without loss of generality, there is a path $y w z$. But then $x,\,y,\,w,\,z$ form a forbidden $\Cinv{4}$ giving a contradiction. Analogous reasoning can be done for two in-neighbors. 
    \end{proof}
 
    This allows us to prove a structural description of $G'$.

    \begin{clm}
    The graph $G'$ forms a $\CC{6}$, possibly with some extra arcs.
    \end{clm}

    \begin{proof}
    Let $C = x_1x_2x_3x_4x_5$ be a~copy of $\CC{5}$ in $G'$. 
    Since $G$ contains far more copies of $\CC{5}$ than a blow-up of $\CC{5}$ and does not contain $\CC{7}$, there is a blob of $G'$ not contained in $C$. Moreover, there is a blob adjacent to a vertex in $C$, because any two blobs appear in a $\CC{5}$.     
    We will show that $G'$ contains a copy of~$\CC{6}$. 
    
    Assume that there is an arc $x_1y$ (the opposite case $yx_1$ is analogous), where $y$ is a blob not contained in $C$. Then $x_2, y \in N^+(x_1)$, so from Claim~\ref{clm:no_induced_cherries}, there is an arc between $x_2$ and $y$. 
    If this arc is $yx_2$, then $x_1yx_2x_3x_4x_5$ forms a~$\CC{6}$. 
    Otherwise, this arc is $x_2y$ and we can use Claim~\ref{clm:no_induced_cherries} once again to deduce that there is an arc between $x_3$ and $y$. Since the arc $x_3y$ forms a forbidden $\Cinv{4}$, this must be the arc $yx_3$. In this case, $x_1x_2yx_3x_4x_5$ forms a $\CC{6}$.
    
    The above reasoning can be also applied to a~copy of~$\CC{6}$ in $G'$ the same way as to a copy of~$\CC{5}$. Therefore, if there is another blob not contained in the proven $\CC{6}$, it extends this $\CC{6}$ to $\CC{7}$, giving a contradiction. 
    Thus, $G'$ consists of $\CC{6}$ possibly with some extra arcs.
    \end{proof}
    
    Let $v_1v_2v_3v_4v_5v_6$ be a $6$-cycle in $G'$. It cannot contain any arc of the form $v_iv_{i+3}$ (indices are taken modulo~6), because then we have the forbidden $\Cinv{4}$. It also cannot contain arcs of the form $v_{i+2}v_i$, because $v_{i+3}$ and $v_i$ would then have a common in-neighbor contradicting Claim~\ref{clm:no_induced_cherries}. Thus, the only possible additional arcs are of the form $v_iv_{i+2}$. Finally, $G'$ cannot contain all three arcs $v_iv_{i+2}$, $v_{i+2}v_{i+4}$, $v_{i+4}v_i$, since then $v_iv_{i+2}v_{i+4}$ is $\CC{3}$, while $v_iv_{i+1}v_{i+2}v_{i+4}$ is $\CC{4}$ and we have a homomorphic image of $\CC{7}$. Thus, there are two arc-maximal constructions, which are shown in Figure~\ref{fig:my_label}.
    Optimizing the sizes of the six blobs of $G$ gives in both cases the same blob sizes as in the conjectured extremal construction and the desired bound for $\ex(n, \CC{5}, \CC{7})$.
\end{proof}

    \begin{figure}[ht]
        \centering
        \includegraphics[width=0.4\textwidth]{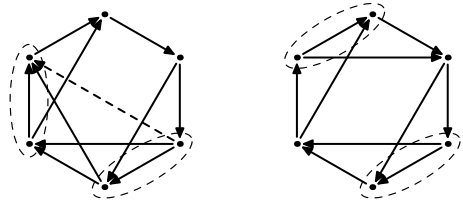}
        \caption{Possible placements of arcs in $G'$. They correspond to adjacent groups (but without the useless dashed arc), or opposite groups in the conjectured extremal construction.}
        \label{fig:my_label}
    \end{figure}

\begin{theorem}\label{thm:ex_c5_c8}
$\ex(n, \CC{5}, \CC{8}) = \left(\frac{n}{5} \right)^5 + o(n^5)$ and $\ex(n, \CC{5}, \CC{13}) = \left(\frac{n}{5} \right)^5 + o(n^5)$. 
\end{theorem}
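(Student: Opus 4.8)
The lower bound in both cases is witnessed by the balanced blow-up of $\CC{5}$: it is $\{\CC{3},\CC{4}\}$-free, contains no cycle of length not divisible by $5$ (so neither $\CC{8}$ nor $\CC{13}$), and has $\left(\frac{n}{5}\right)^5 + o(n^5)$ copies of $\CC{5}$. For the upper bounds the plan is first to reduce the two cases to one. Passing to a $(5,13)$-cleared extremal graph $G$ in the usual way (losing only $o(n^5)$ copies of $\CC{5}$), observe that since every vertex $v$ of $G$ lies on a $\CC{5}$, any homomorphic image of $\CC{8}$ based at $v$ concatenates with the length-$5$ closed walk at $v$ into a homomorphic image of $\CC{13}$, which is forbidden; hence $G$ contains no homomorphic image of $\CC{8}$ either, so $G$ is in particular $(5,8)$-cleared. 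Since a $(5,8)$-cleared graph is $\CC{8}$-free, it suffices to prove that every $(5,8)$-cleared graph has at most $\left(\frac{n}{5}\right)^5 + o(n^5)$ copies of $\CC{5}$; this bound then yields both $\ex(n,\CC{5},\CC{8})$ and $\ex(n,\CC{5},\CC{13})$.

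So let $G$ be a $(5,8)$-cleared extremal graph. By Observation~\ref{obs:cleanup_small_cycles}, using $8=3+5=2\cdot 4$, the graph $G$ contains neither $\CC{3}$ nor $\CC{4}$; in particular every copy of $\CC{5}$ in $G$ is induced. Moreover, since every arc $x\to y$ lies on a $\CC{5}$ and hence bounds a directed path of length $4$ from $y$ to $x$, a $\Cinv{5}$ (a directed path of length $4$ from $x$ to $y$ together with the arc $x\to y$) would close up into a homomorphic image of $\CC{8}$; thus $G$ is also $\Cinv{5}$-free, and more generally $G$ contains no configuration forcing a closed walk of length $8$. The crucial point — and the reason these values of $\ell$ are not covered by Theorem~\ref{thm:ex_c5_cl} — is that a transitive triangle $\TT{3}$ is \emph{not} forbidden in $G$: as in the proof of Theorem~\ref{thm:ex_c5_cl}, a $\TT{3}$ only forces homomorphic images of $\CC{m}$ with $m$ in the numerical semigroup $\langle 5,6,9\rangle$, and one checks directly that $8\notin\langle 5,6,9\rangle$. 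Consequently the underlying undirected graph of $G$ need not be triangle-free, so one cannot simply invoke the triangle-free pentagon bound of \cite{grzesik2012pentagon, c5_without_c3}, and a vertex may genuinely have three neighbours on some copy of $\CC{5}$, which also blocks a direct application of Lemma~\ref{lem:ck_optimality} with $k=d=5$.

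To finish, one must bound the number of copies of $\CC{5}$ in an oriented graph forbidding this fixed finite family of small configurations (at least $\CC{3}$, $\CC{4}$, and $\Cinv{5}$), together with the cleared-graph hypotheses. I would obtain the target bound $\left(\frac{n}{5}\right)^5 + o(n^5)$ either by a flag algebra computation on oriented graphs with these forbidden subgraphs, in the same spirit as the Flagmatic proof of Theorem~\ref{thm:ex_c5_c3}, or, failing a clean certificate, by a stability argument: show that each vertex lies on $\approx\left(\frac{n}{5}\right)^4$ copies of $\CC{5}$, deduce a large minimum degree of the underlying graph and a scarcity of short cyclic structures, and conclude that $G$ is essentially a blow-up of $\CC{5}$, which has at most $\left(\frac{n}{5}\right)^5$ copies of $\CC{5}$. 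The main obstacle is precisely the transitive triangles: they survive the cleared-graph reduction because $8\notin\langle 5,6,9\rangle$, so the triangle-free machinery of Theorem~\ref{thm:ex_c5_cl} is unavailable, and any complete proof must either certify the bound computationally or show through a delicate argument that such triangles cannot push the $\CC{5}$-count above $\left(\frac{n}{5}\right)^5$.
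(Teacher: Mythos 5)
You set up the lower bound and the reduction correctly: the balanced blow-up of $\CC{5}$ gives the bound from below, and since $13 = 8 + 5$, a $(5,13)$-cleared graph contains no homomorphic image of $\CC{8}$ either (this is the content of Observation~\ref{obs:cleanup_small_cycles}), so both equalities reduce to showing $\ex(n,\CC{5},\CC{8}) \le \left(\frac{n}{5}\right)^5 + o(n^5)$. You also correctly identify that a $(5,8)$-cleared graph forbids $\CC{3}$, $\CC{4}$, and $\Cinv{5}$, but \emph{not} $\TT{3}$, since $8 \notin \langle 5,6,9 \rangle$; this is exactly why the triangle-free argument of Theorem~\ref{thm:ex_c5_cl} and a naive application of Lemma~\ref{lem:ck_optimality} with $k=d=5$ both fail here. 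That diagnosis is accurate and useful.

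The genuine gap is that you never actually prove the upper bound. You state two potential strategies --- a flag algebra certificate, or a min-degree stability argument --- but execute neither, and the paper uses neither. The paper's proof is a finite structural classification of the \emph{quotient} graph $G'$ formed by contracting each class of vertices with identical in- and out-neighborhoods to a single blob (as in the proof of Theorem~\ref{thm:c5_without_c7}). Extremality lets one assume that any two blobs appear together in some $\CC{5}$ (otherwise merge them); then any two non-adjacent blobs are joined by a directed path of length $3$, and cannot share an in- or out-neighbor without creating $\Cinv{5}$. An extension argument shows that $G'$ must be a $\CC{m}$ with $m \in \{5,6,7\}$ plus possibly extra chords, and careful case analysis eliminates $m=6$ and $m=7$; in the remaining case $m=5$ the cycle is induced and optimizing blob sizes gives the balanced blow-up. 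In particular the $\TT{3}$ obstacle you flagged evaporates only \emph{after} the structure theorem is proved, since a blow-up of $\CC{5}$ has no transitive triangles --- but this must be deduced, not assumed. Without carrying out this (or some) concrete argument, your proposal is an outline of the difficulty rather than a proof.
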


\begin{proof}
    The lower bound in both cases is achieved by a balanced blow-up of $\CC{5}$. By Observation~\ref{obs:cleanup_small_cycles}, it is enough to prove that \hbox{$\ex(n, \CC{5}, \CC{8}) \leq \left(\frac{n}{5} \right)^5 + o(n^5)$}.
    The proof follows similar lines as the proof of Theorem~\ref{thm:c5_without_c7}. 
    
    Let $G$ be an extremal $(5, 8)$-cleared graph and $G'$ be the graph on equivalence classes (blobs) of $G$.
    Similarly as in the proof of Theorem~\ref{thm:c5_without_c7}, we may assume that for any two blobs, there is a copy of $\CC{5}$ in $G'$ containing both of them. This implies that any two non-adjacent blobs in $G'$ are connected by a directed path of length $3$ in some direction. 
    Since $G$ is a $(5,\, 8)$-cleared graph, it cannot contain $\Cinv{5}$, as otherwise it contains a homomorphic image of $\CC{8}$. Assume now that two non-adjacent blobs $u$ and $v$ share a common in-neighbor or out-neighbor $w$. Then, a directed path of length $3$ between $u$ and $v$ creates a forbidden homomorphic image of $\Cinv{5}$. Therefore, two non-adjacent blobs cannot share any in-neighbors or out-neighbors.

    Using this fact, we will argue that for any $i\geq 5$, a blob connected to $\CC{i}$ extends it to $\CC{i+1}$. If $x_1x_2\ldots x_i$ is a copy of $\CC{i}$ and there is an arc $x_1y$ in $G'$, then we have an arc $yx_2$ creating $\CC{i+1}$ or an arc $x_2y$. In the latter case, we have an arc $yx_3$ creating $\CC{i+1}$ or an arc $x_3y$. This finally implies that there is an arc $yx_4$ creating $\CC{i+1}$ or an arc $x_4y$ forming a forbidden $\Cinv{5}$ in $G'$.
    Therefore, $G'$ is a $\CC{m}$, possibly with some extra arcs, where $m=5$, $6$ or $7$. We consider those three cases separately. 

    If $m=5$, this $\CC{5}$ must be induced, since $(5,8)$-cleared graphs do not contain $\CC{3}$ and $\CC{4}$. Optimizing the sizes of the blobs shows that a balanced blow-up of $\CC{5}$ contains the largest number of~$\CC{5}$, implying the desired bound. 

    In the case $m=6$, let $v_1v_2v_3v_4v_5v_6$ be the $\CC{6}$ in $G'$. Due to the absence of $\CC{3}$ and $\CC{4}$, the only possible additional arcs are of the type $v_iv_{i+2}$ (indices are taken modulo $6$). Moreover, for any two such arcs $v_iv_{i+2}$ and $v_jv_{j+2}$ we have $j=i\pm1$, as otherwise we have a forbidden $\CC{4}$. In particular, there are at most two such arcs. Without loss of generality, these are $v_1v_3$ and $v_2v_4$. But then there is no $\CC{5}$ containing $v_2v_3$, which contradicts the fact that $G$ is $(5,8)$-cleared. 
    
    For $m=7$, let $v_1v_2v_3v_4v_5v_6v_7$ be the $\CC{7}$ in $G'$. We do not have arcs of the form $v_iv_{i+4}$ (they create $\CC{4}$) or $v_iv_{i+5}$ (they create $\CC{3}$). We will now show that there are also no arcs of the form $v_iv_{i+3}$. Firstly, note that if there are two such arcs sharing a vertex, then we have forbidden~$\CC{3}$. Thus, there are at most $3$ such arcs in total, and so there exists an arc, say $v_1v_{4}$ such that $v_{2}v_{6}$ and $v_{3}v_{7}$ are not in $G'$. The obtained cycle $C = v_{4}v_{5}v_6v_7v_1$ must be induced. 
    The arc $v_1v_{2}$ needs to be contained in some $\CC{5}$, but this $\CC{5}$ must pass either through an arc of the form $v_iv_{i+3}$ (which is impossible since $v_{2}v_{6}$, $v_{3}v_{7}$ are not in $G'$ and other such arcs would be inside $C$) or through two arcs of the form $v_iv_{i+2}$ (also impossible, because one of them would have to be an arc inside $C$).

    We are left with the case where the additional arcs are only of the form $v_iv_{i+2}$. Let $t$ denote the number of these arcs. We consider separate cases depending on the value of $t$, showing in each of them an arc that is not contained in any $\CC{5}$, which is a contradiction.
    \begin{itemize}
        \item Case $t \leq 2$: Let $v_iv_{i+2}$ be one of those arcs. It is easy to verify that the arc $v_iv_{i+1}$ is not contained in any $\CC{5}$.
        \item Case $t = 3$: We say that two arcs $v_iv_{i+2}$, $v_jv_{j+2}$ \emph{intersect} if $i = j \pm 1$. Note that if there is no pair of intersecting arcs, then $G'$ contains forbidden $\CC{4}$. Thus, there is an intersecting pair $v_iv_{i+2}$, $v_{i+1}v_{i+3}$. But then the arc $v_{i+1}v_{i+2}$ is not contained in $\CC{5}$.
        \item Case $t \geq 4$. In this case, there exist two arcs sharing a vertex: $v_{i-2}v_i$ and $v_iv_{i+2}$. This creates a cycle $v_{i-3}v_{i-2}v_iv_{i+2}v_{i+3}$ that must be induced. The only possible remaining arcs are $v_{i-3}v_{i-1}$, $v_{i-1}v_{i+1}$, and $v_{i+1}v_{i+3}$. For the arc $v_{i-2}v_{i-1}$ to be contained in some $\CC{5}$, both $v_{i-1}v_{i+1}$ and $v_{i+1}v_{i+3}$ must be in $G'$. Similarly, for $v_{i+1}v_{i+2}$ to be contained in some $\CC{5}$, both $v_{i-3}v_{i-1}$ and $v_{i-1}v_{i+1}$ must be in $G'$. But then $v_{i-3}v_{i-1}v_{i+1}v_{i+3}$ is a forbidden $\CC{4}$. 
    \end{itemize}
    The contradiction obtained in each case concludes the proof.
\end{proof}

In this subsection, we considered all possible forbidden cycles in the dense case, except when the forbidden cycle is $\CC{4}$. As we argue below, in this case, a different extremal example occurs. We leave finding its exact structure as an open problem.   

    \begin{question}
        What is the value of $\ex(n, \CC{5}, \CC{4})$? 
    \end{question}

    Interestingly enough, this case remains open and without a believable conjecture. An upper bound $\ex(n, \CC{5}, \CC{4}) \leq 0.0567\binom{n}{5}$ may be obtained using Flagmatic. In the following, we provide a (rather complicated) construction that gives a lower bound $\ex(n, \CC{5}, \CC{4}) \geq 0.0517\binom{n}{5}$. Note that among all previously considered constructions that do not contain $\CC{4}$, the best bound is achieved in an iterated blow-up of~$\CC{5}$, but it is only around $0.0384\binom{n}{5}$. A far better construction is a balanced blow-up of a graph obtained from $\CC{7}$ by adding $7$ additional arcs, each forming $\CC{5}$ with the arcs of the cycle. This construction gives a lower bound of around $0.0499\binom{n}{5}$. Quite surprisingly, one can improve this construction further.

    Consider the above graph, but instead of orienting all arcs between two blobs in one direction, we choose the following orientation. We identify vertices in each blob with points distributed uniformly in the segment $[0, 1]$.
    For a fixed non-decreasing function $f: [0,1] \to [0,1]$ and two blobs $A$ and $B$, we put arc $xy$, for $x \in A$, $y \in B$, whenever $f(x) \geq y$ and arc $yx$ if $f(x)<y$. In particular, the out-neighborhood of $x$ in $B$ is a~subset of $[0,f(x)]$ and the in-neighborhood of $x$ in $B$ is a~subset of $(f(x), 1]$.
    Observe that a~graph with vertex set $A \cup B$ does not contain $\CC{4}$. Indeed, if $xyx'y'$ is a~copy of $\CC 4$ with $x, x' \in A$, then $y' > f(x) \geq y$ and $y > f(x') \geq y'$, which is not possible. 
    
    The stated bound $\ex(n, \CC{5}, \CC{4}) \geq 0.0517\binom{n}{5}$ is achieved by considering function $f$ defined as $f(x) = \min(x + c, 1)$, where $c \approx 0.67757$.

\section{The sparse case}\label{sec:sparse_case}

Lastly, we present one example for the sparse case.

\begin{theorem}
$\ex(n, \CC{3}, \CC{6}) = \frac{1}{4}n^2 + o(n^2)$.
\end{theorem}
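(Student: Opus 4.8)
I would take the balanced blow-up of $\CC{3}$ in which one blob consists of a single vertex and the remaining two blobs have sizes $\lceil (n-1)/2\rceil$ and $\lfloor (n-1)/2\rfloor$. In any blow-up of $\CC{3}$ every closed directed walk of length $6$ has to traverse the cyclic order of the three blobs twice, hence uses two distinct vertices of each blob; since one blob has a single vertex, this graph contains no $\CC{6}$. On the other hand, picking one vertex from each blob produces a $\CC{3}$, so the number of copies of $\CC{3}$ is $\lceil (n-1)/2\rceil\lfloor (n-1)/2\rfloor=\tfrac14 n^2-O(n)$.

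\textbf{Upper bound --- set-up and the first key claim.} Let $G$ be an $n$-vertex oriented graph with no $\CC{6}$. Call an arc \emph{thin} if it lies in exactly one copy of $\CC{3}$, and let $\widetilde G_{\mathrm{thin}}$ be the underlying undirected graph on $V(G)$ whose edges are the thin arcs (since $G$ is oriented there is no ambiguity, so $e(\widetilde G_{\mathrm{thin}})$ equals the number of thin arcs). The first claim is that \emph{every copy of $\CC{3}$ in $G$ contains a thin arc}. If some copy $a\to b\to c\to a$ had all three arcs in at least two triangles, choose $c'\ne c$ with $a\to b\to c'\to a$, $a'\ne a$ with $b\to c\to a'\to b$, and $b'\ne b$ with $c\to a\to b'\to c$; all six of $a,b,c,a',b',c'$ are then distinct (every possible coincidence would force two opposite arcs in the oriented graph $G$), and $a\to b'\to c\to a'\to b\to c'\to a$ is a $\CC{6}$, a contradiction. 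Fixing one thin arc in each copy of $\CC{3}$ gives an injection from the copies of $\CC{3}$ in $G$ into the thin arcs, so it remains to prove $e(\widetilde G_{\mathrm{thin}})\le \tfrac14 n^2+o(n^2)$.

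\textbf{Bounding the thin arcs.} To bound $e(\widetilde G_{\mathrm{thin}})$ I would count triangles in $\widetilde G_{\mathrm{thin}}$. Such a triangle is either a directed $\CC{3}$ all of whose arcs are thin --- and there are at most $\ex(n,\CC{3},\CC{6})=O(n^2)$ copies of $\CC{3}$ in $G$ altogether by Theorem~\ref{thm:order_of_magnitude} --- or a transitive triangle with source $x$, sink $z$ and middle vertex $y$, with the arcs $x\to y$, $y\to z$, $x\to z$ all thin. For the latter I use the second key claim: \emph{if $x\to z$ is an arc and $x\to y\to z$ is a directed path whose two arcs are both thin, then $y$ is the only vertex with $x\to y'\to z$.} Indeed, writing $x\to y\to p\to x$ for the unique triangle on $x\to y$ and $y\to z\to q\to y$ for the unique triangle on $y\to z$, any $y'\ne y$ with $x\to y'\to z$ would yield the $\CC{6}$ given by $p\to x\to y'\to z\to q\to y\to p$ (the six vertices are distinct: each coincidence again forces two opposite arcs, using $x\to z\in E(G)$ to separate $p$ from $z$ and $q$ from $x$). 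Consequently each such transitive triangle is determined by its arc $x\to z$, so there are at most $\binom n2=O(n^2)$ of them. Hence $\widetilde G_{\mathrm{thin}}$ has $O(n^2)=o(n^3)$ triangles, and by supersaturation for Mantel's theorem (equivalently the triangle removal lemma) this forces $e(\widetilde G_{\mathrm{thin}})\le \tfrac14 n^2+o(n^2)$. Combined with the injection from the first claim, $\ex(n,\CC{3},\CC{6})\le \tfrac14 n^2+o(n^2)$, matching the construction.

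\textbf{Main obstacle.} The whole proof rests on finding the two explicit $\CC{6}$'s, and the subtle point is that the first claim alone is not enough: the thin-arc graph really can contain triangles (transitive ones --- a single transitive triangle of thin arcs does \emph{not} yield a $\CC{6}$ on those six vertices), so one has to invoke the second claim to show such triangles are too sparse to spoil the Mantel-type bound. In carrying this out, the recurring technical step is checking that the six vertices used in each constructed $\CC{6}$ are genuinely distinct, which in every case boils down to the fact that $G$ is an oriented graph and hence contains no pair of opposite arcs.
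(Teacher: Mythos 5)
Your proof is correct, and the overall skeleton matches the paper's: define thin arcs, show every $\CC{3}$ contains a thin arc via an explicit $\CC{6}$, reduce to bounding the number of thin arcs, and finish with a Mantel-type bound. Where you diverge is in how you establish that the (undirected) thin-arc graph is essentially triangle-free. The paper forbids two four-vertex configurations around a single thin arc $vw$ (a ``blow-up of $\CC{3}$,'' ruled out directly by thinness of $vw$, and a ``blow-up of $\TT{3}$,'' ruled out by building two $\CC{3}$'s on the extra arcs to form a $\CC{6}$), then applies Szemer\'edi's regularity lemma to $G'$ and observes that a triangle in the reduced graph would force one of these configurations, so $o(n^2)$ arcs can be removed to make $G'$ triangle-free. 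You instead count the triangles of $\widetilde G_{\mathrm{thin}}$ directly: directed triangles are $O(n^2)$ by the order-of-magnitude theorem (or simply because each thin arc lies in at most one), and transitive triangles are $O(n^2)$ by your second uniqueness claim (for an arc $x\to z$, at most one thin-thin path $x\to y\to z$ exists, since a second such path yields a $\CC{6}$ through the unique triangles on $x\to y$ and $y\to z$); then $o(n^3)$ triangles plus supersaturation for Mantel gives the edge bound. Your lemma and the paper's forbidden $\TT{3}$-blow-up are close cousins but not the same statement: theirs assumes the base arc $v\to w$ is thin, yours assumes the two path arcs are thin; both work. A small bonus of your route is that it can avoid regularity entirely by using Erd\H{o}s--Simonovits supersaturation rather than the triangle removal lemma (though citing them as ``equivalent'' is imprecise --- supersaturation is the weaker, regularity-free statement, and it suffices here). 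Your lower-bound construction is the same as the paper's, just described as a blow-up of $\CC{3}$ with one singleton blob rather than as an explicit vertex/part decomposition.
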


\begin{proof} 
The lower bound is achieved by the following construction, shown in Figure \ref{fig:constr_c3_c6}. Isolate one vertex $v$ and split remaining vertices into two parts $A$ and $B$ of roughly the same size. Add all arcs from $v$ to $A$, from $A$ to $B$, and from $B$ to $v$. 

\begin{figure}[h!]
    \centering
    \includegraphics{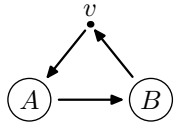}
    \caption{The extremal graph for $\ex(n, \CC{3}, \CC{6})$.}
    \label{fig:constr_c3_c6}
\end{figure}

Let $G$ be an extremal oriented graph on $n$ vertices. We may assume that each arc of $G$ is contained in at least one copy of $\CC{3}$. We say that an arc of $G$ is \emph{thin} if it is contained in exactly one $\CC{3}$, otherwise we say that it is \emph{thick}.

The crucial observation is that each copy of~$\CC{3}$ in $G$ contains at least one thin arc. Indeed, if there is a~copy of~$\CC{3}$ consisting of only thick arcs $vw$, $wu$, and $uv$, then we can find three more vertices $v', w', u'$ in $G$ such that $vw'u$, $uv'w$, and $wu'v$ are directed triangles, which results in a~forbidden $\CC{6}$ $vw'uv'wu'$.

Since each copy of~$\CC{3}$ contains at least one thin arc and each such arc cannot belong to more copies of~$\CC{3}$, we conclude that $\ex(n, \CC{3}, \CC{6})$ is upper bounded by the number of thin arcs in~$G$.

Let $vw$ be a thin arc. Observe that the following configurations are not subgraphs of~$G$:

\tikzset{vertex/.style = {shape=circle,draw,minimum size=0.3em, fill=black,inner sep=0pt, outer sep=0.3em}}
\tikzset{edge/.style = {->, >={Stealth[length=0.8em]}}}

\begin{center}
\begin{tikzpicture}[scale=2.5]
\node[vertex] (v) at (0,0) [label=below:$v$] {};
\node[vertex] (w) at (1,0) [label=below:$w$]{};
\node[vertex] (x) at (0.5,0.3) {};
\node[vertex] (y) at (0.5,-0.3) {};
\foreach \from/\to in {v/w, w/x, w/y, x/v, y/v}
    \draw[edge] (\from) to (\to);
\node [below=0.1em of y, align=flush center]
        {
            blowup of a $\CC{3}$
        };
\end{tikzpicture} $\qquad \qquad$
\begin{tikzpicture}[scale=2.5]
\node[vertex] (v) at (0,0) [label=below:$v$] {};
\node[vertex] (w) at (1,0) [label=below:$w$]{};
\node[vertex] (x) at (0.5,0.3) {};
\node[vertex] (y) at (0.5,-0.3) {};
\foreach \from/\to in {v/w, x/w, y/w, v/x, v/y}
    \draw[edge] (\from) to (\to);
\node [below=0.1em of y, align=flush center]
        {
            blowup of a $\TT{3}$
        };
\end{tikzpicture}
\end{center}

First configuration is not allowed just by the definition of a~thin arc. In the second configuration, for each upper arc one can find a copy of $\CC{3}$ containing it, to form a~copy of $\CC{6}$.

Let $G'$ be the subgraph of $G$ consisting of all thin arcs of $G$. 
Applying the regularity lemma to $G'$, we conclude that its reduced graph does not contain any triangles, so by removing $o(n^2)$ arcs we can delete all triangles in $G'$. But then, Mantel's theorem applied to the undirected underlying graph of $G'$ implies that $G'$ have at most $\frac{1}{4}n^2$ arcs.
Therefore, $G$ has $\frac{1}{4}n^2 + o(n^2)$ thin arcs as desired. 
\end{proof}

We believe that if a cycle of length $3t$ is forbidden, then a similar construction with $t-1$ vertices in one blob is extremal up to a lower order error term.

\begin{conj}
For any integer $t \ge 2$, $\ex(n, \CC{3}, \CC{3t}) = \frac{t-1}{4}n^2 + o(n^2)$.
\end{conj}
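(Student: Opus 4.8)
\emph{The construction.} The lower bound should come from the natural generalization of the $t=2$ example: reserve a set $W$ of $t-1$ vertices, partition the remaining $n-t+1$ vertices into near-equal parts $A$ and $B$, and add every arc from $W$ to $A$, from $A$ to $B$, and from $B$ to $W$. Since arcs only run $W\to A\to B\to W$, every directed cycle crosses the three groups cyclically in this order, so its length is a multiple of $3$ and it uses a fresh vertex of $W$ on each lap; hence the longest directed cycle has length $3(t-1)$ and in particular $\CC{3t}$ does not appear, while the number of directed triangles is $|W|\,|A|\,|B|=\frac{t-1}{4}n^2-O(n)$.

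\emph{Framework for the upper bound.} I would try to run the $t=2$ argument with the threshold $1$ replaced by $t-1$. Let $G$ be extremal; we may assume every arc of $G$ lies in some $\CC3$, and by Theorem~\ref{thm:order_of_magnitude} we already know $G$ has only $O(n^2)$ copies of $\CC3$. Call an arc \emph{thin} if it lies in at most $t-1$ copies of $\CC3$ and \emph{thick} otherwise, and let $G'$ be the spanning subgraph formed by the thin arcs. Mapping each triangle that contains a thin arc to one such arc is at most $(t-1)$-to-one, so
\[
\#\CC3\ \le\ (t-1)\,e(G')\ +\ \#\{\text{copies of }\CC3\text{ all of whose arcs are thick}\},
\]
and it suffices to prove (i) $e(G')\le\frac14 n^2+o(n^2)$, and (ii) the number of all-thick triangles is $o(n^2)$.

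\emph{Sketch of (i).} Apply the directed regularity lemma to $G'$ and try to show its reduced graph is triangle-free. A cyclically oriented triangle there would force $\Omega(n^3)$ copies of $\CC3$ inside $G'\subseteq G$, contradicting $\#\CC3=O(n^2)$. A transitively oriented triangle would give a dense regular block of $G'$ full of copies of $\TT3$; for $t=2$ this reproduces the ``blow-up of a $\TT3$'' forbidden configuration (a thin arc with two internally disjoint length-$2$ detours, each of whose arcs carries a further triangle), which splices into $\CC6$, and for general $t$ one wants the analogous configuration (now with $\Omega(n)$ detours and the triangles guaranteed on every arc) to splice, via Corollary~\ref{cor:coin_problem}, into a genuine $\CC{3t}$ in $G$. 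If the reduced graph is triangle-free, then after deleting $o(n^2)$ arcs $G'$ contains neither $\CC3$ nor $\TT3$, so its underlying undirected graph is triangle-free and Mantel's theorem gives (i).

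\emph{The main obstacle, (ii).} For $t=2$ this is precisely the statement that an all-thick triangle creates $\CC6$, proved by splicing the third vertices of the three incident triangles into a $6$-cycle. For $t\ge3$ the naive generalization is \emph{false}: a ``triple book'' --- a triangle $xyz$ together with pairwise disjoint, mutually non-adjacent sets $A\subseteq N^+(y)\cap N^-(x)$, $B\subseteq N^+(z)\cap N^-(y)$, $C\subseteq N^+(x)\cap N^-(z)$ whose members have no other arcs --- has all three arcs of $xyz$ thick once $|A|,|B|,|C|\ge t$, yet its longest directed cycle still has length $6$. So a single all-thick triangle is not enough; instead one must show that all-thick triangles cannot be \emph{numerous}. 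The plan is to prove that the thick-arc subgraph of $G$ contains only $o(n^2)$ triangles (optimistically $O(n)$, matching $|W|=t-1$), by arguing that a sufficiently large, or sufficiently overlapping, family of all-thick triangles supplies enough distinct vertices in the various common neighbourhoods $N^+(\cdot)\cap N^-(\cdot)$ to be threaded into a single directed path of length $3t-1$ that closes up to $\CC{3t}$ --- the difference from the isolated triple book being exactly that overlaps force one long cycle rather than many short ones. Pinning down the right ``overlap'' hypothesis and making this threading rigorous is the step I expect to be hardest; granting it, together with (i) one obtains $\ex(n,\CC3,\CC{3t})\le\frac{t-1}{4}n^2+o(n^2)$, matching the construction.
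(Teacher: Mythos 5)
This statement is a \emph{conjecture} in the paper, not a theorem; the paper offers no proof for $t\ge 3$, so there is nothing to compare your attempt against. Evaluating it on its own terms: your lower-bound construction is correct and is exactly the generalization the authors had in mind (replace the single apex of the $t=2$ example with an independent set of $t-1$ apices), and your ``thin/thick with threshold $t-1$'' decomposition is the natural extension of the paper's $t=2$ argument. You have also correctly identified where that argument breaks.

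The genuine gap is your step (ii), and you diagnose it accurately: the $t=2$ proof rests on the local fact that a single all-thick triangle forces a $\CC{6}$, and your ``triple book'' shows the analogous local statement is simply false for $t\ge 3$ --- one can make every arc of a triangle lie in $t$ or more triangles without creating any directed cycle longer than $6$. The repair you propose (many, or suitably overlapping, all-thick triangles force a $\CC{3t}$) is not made rigorous, and it is exactly the open content of the conjecture. Note also that the same difficulty reappears in your step (i): for $t=2$, a transitive triangle in the reduced graph of $G'$ is ruled out by the blow-up-of-$\TT{3}$ configuration forcing a $\CC{6}$, but for $t\ge 3$ one would again need it to force a genuine simple $\CC{3t}$, not merely a closed walk of length $3t$ (which the extremal construction itself contains, e.g.\ by traversing one triangle $t$ times). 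So both (i) and (ii) hinge on the same missing $\CC{3t}$-forcing lemma, and neither is established by the proposal.
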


\section{Directed graphs}\label{sec:directed}

An analogous problem can be also considered in the setting of directed graphs, that is, when we additionally allow vertices to be connected by \emph{digons} -- pairs of vertices $u$ and $v$ connected simultaneously by arcs $uv$ and $vu$. 

In this setting, the maximum number of $\CC{k}$ in an $n$-vertex directed graph not containing $\CC{\ell}$ is also of order $\Theta(n^k)$ if $k \nmid \ell$ and $\Theta(n^{k-1})$ if $k \mid \ell$, because the proof of Theorem~\ref{thm:order_of_magnitude} does not use the assumption that the extremal directed graph does not contain digons. 

Additionally, digons allow to consider a balanced blow-up of $\CC{2}$, that is a complete balanced bipartite digraph, which is an optimal extremal constructions when $k$ is even and $\ell$ is odd. This leads to a more natural definition of $d$ than in Theorem~\ref{thm:general_thm} and the following analog of Theorems~\ref{thm:general_thm} and \ref{thm:rand_thm} for directed graphs.

\begin{theorem}\label{thm:directed}
Let $k$ and $\ell$ be integers satisfying $k \geq 3$, $\ell \geq 2(k-1)^2$, and $k \nmid \ell$. 
Denote by $d$ the smallest positive integer that divides $k$ but does not divide $\ell$. Then the maximum number of $\CC{k}$ in an $n$-vertex directed graph not containing $\CC{\ell}$ is equal to $\frac{n}{k}\left(\frac{n}{d}\right)^{k-1} + o(n^k)$.
\end{theorem}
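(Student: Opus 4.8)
The plan is to reduce the directed-graph statement to the two oriented-graph theorems (Theorems~\ref{thm:general_thm} and~\ref{thm:rand_thm}) that were already proved, handling the digon structure as a separate layer. For the lower bound, observe that if $d=2$ (equivalently, $k$ is even and $\ell$ is odd), a complete balanced bipartite digraph—the blow-up of $\CC{2}$—contains $\CC{\ell}$ for no odd $\ell$ and has $\frac{n}{k}\left(\frac{n}{2}\right)^{k-1}+o(n^k)$ copies of $\CC{k}$; if $d\ge 3$, then $d$ is exactly the quantity from Theorem~\ref{thm:general_thm} (since $d\ge 3$ forces the smallest positive divisor of $k$ not dividing $\ell$ to be at least $3$, hence $>2$), and a balanced blow-up of $\CC{d}$ gives the matching lower bound. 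So the content is the upper bound.

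For the upper bound I would first run the analog of the cleanup from Section~\ref{sec:dense_case}: take an extremal directed graph $G$, use the directed regularity/blow-up machinery (Observation~\ref{homomorphic_img_removal} applies verbatim to digraphs) to remove $o(n^2)$ arcs so that $G$ contains no homomorphic image of $\CC{\ell}$, and discard arcs/vertices not in any $\CC{k}$; call the result a $(k,\ell)$-cleared digraph. The key structural step is to control the digons. A digon $\{u,v\}$ together with a directed path of length $k-1$ from $v$ to $u$ (which exists since $vu$ lies in a $\CC{k}$) yields homomorphic images of $\CC{k}$ and $\CC{k+1}$ intersecting in $u$ and $v$, hence homomorphic images of $\CC{(k+1)x+ky}$ for all $x,y\ge 0$; since $\gcd(k,k+1)=1$ and $\ell\ge k(k-1)$, Corollary~\ref{cor:coin_problem} forces a homomorphic image of $\CC{\ell}$ — a contradiction. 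Therefore a $(k,\ell)$-cleared digraph has \emph{no digons} unless the ``$\CC{2}$ route'' is genuinely open, i.e.\ unless $d=2$.

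This splits the argument into two cases. If $d\ge 3$, then $G$ is digon-free, hence an oriented graph, and the hypotheses of Theorem~\ref{thm:general_thm} are met (note $\ell\ge 2(k-1)^2$ is exactly its hypothesis, and when $d\ge 5$ the needed side condition ``$2\nmid k$ or $2\mid\ell$'' holds automatically: if $d\ge 5$ then $4\nmid k$ and either $3\nmid k$ or $3\mid\ell$, but also $d>2$ forces... — here one checks that $d\ge 5$ with $2\mid k,\,2\nmid\ell$ cannot occur because then $d$ would be $2$); thus $G$ has at most $\frac{n}{k}\left(\frac{n}{d}\right)^{k-1}$ copies of $\CC{k}$, as wanted. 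The remaining case $d=2$ means $2\mid k$ and $2\nmid\ell$; now the oriented analog is split by Theorems~\ref{thm:general_thm} and~\ref{thm:rand_thm} into $4\mid k$ or $3\mid k,\,3\nmid\ell$ (oriented blow-up of $\CC{3}$ or $\CC{4}$ wins, giving fewer than $\frac{n}{k}(n/2)^{k-1}$) versus the genuine bipartite case. To finish uniformly, one shows directly that a $(k,\ell)$-cleared digraph with $d=2$ has at most $\frac{n}{k}\left(\frac{n}{2}\right)^{k-1}+o(n^k)$ copies of $\CC{k}$: adapt the proof of Lemma~\ref{lem:bipartite_optimal} (the path-counting Claim~\ref{cla:path_bound} and the Andrásfai–Erdős–Sós step) to digraphs, where a digon is simply an edge of the underlying undirected graph, concluding that $\widetilde G$ is bipartite, then apply the spectral bound of Lemma~\ref{lem:knn_rand_optimal} to the (now complete, including all digons) bipartite digraph — which is literally the same adjacency-matrix computation with $M$ the $0/1$ bipartite matrix, yielding $\frac{2}{k}(n/4)^k=\frac{n}{k}(n/2)^{k-1}$ when $k\equiv 0\pmod 4$ as well (the $k\equiv 2\pmod 4$ restriction in Lemma~\ref{lem:knn_rand_optimal} is used only to force bipartiteness of the oriented graph, which here we already have).

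The main obstacle I anticipate is the case $d=2$ with $4\mid k$: Lemma~\ref{lem:knn_rand_optimal} as stated needs $4\nmid k$, so one must either re-prove the bipartite digraph bound for all even $k$ — which is easier here, since the full complete bipartite digraph has adjacency matrix $\begin{psmallmatrix}0&J\\J&0\end{psmallmatrix}$ whose only nonzero eigenvalues are $\pm\sqrt{m(n-m)}\le \pm n/2$, so $\mathrm{tr}(M^k)=2(m(n-m))^{k/2}\le 2(n^2/4)^{k/2}$ directly — or route through the fact that for $4\mid k$ the bipartite construction is anyway dominated by the blow-up of $\CC{4}$ and hence never extremal, reducing to Theorem~\ref{thm:general_thm}. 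Verifying that this dichotomy is exhaustive and that the numerical hypothesis $\ell\ge 2(k-1)^2$ suffices in every branch (in particular that it dominates the weaker $\ell>33k^2$-type thresholds only when we actually need the bipartite sub-argument, where the blow-up of $\CC{4}$ already wins and no such threshold is needed) is the bookkeeping that must be done carefully.
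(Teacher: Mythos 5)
Your treatment of the case $d\ge 3$ reaches the right conclusion but the stated mechanism is off: a digon $\{u,v\}$ together with a $\CC{k}$ through one of its arcs gives a closed walk of length $k$ and a closed walk of length $2$ sharing a vertex — not a closed walk of length $k+1$. You therefore get closed walks of all lengths $ka+2b$, and you need $\gcd(k,2)\mid\ell$ to finish the coin argument. This still works whenever $d\ge 3$ (then $k$ is odd, or $\ell$ is even and you take $a=0$), but the claimed $\CC{k+1}$ and $\gcd(k,k+1)=1$ do not appear; the conclusion is salvageable, the justification as written is not.

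The case $d=2$ contains a genuine gap. You propose to establish bipartiteness of the underlying graph by adapting Lemma~\ref{lem:bipartite_optimal}, but that lemma's Andrásfai--Erdős--Sós route requires $\ell>33k^2$, which is strictly stronger than the hypothesis $\ell\ge 2(k-1)^2$ you are given, so the adaptation as described proves less than the theorem. Your proposed escape hatch for $4\mid k$ — ``the bipartite construction is anyway dominated by the blow-up of $\CC{4}$'' — is backwards: the complete balanced bipartite \emph{digraph} has $\frac{n}{k}(n/2)^{k-1}+o(n^k)$ copies of $\CC{k}$, whereas the balanced blow-up of $\CC{4}$ has only $\frac{n}{k}(n/4)^{k-1}+o(n^k)$, so the bipartite digraph strictly dominates and is precisely what you must bound from above; you cannot reduce to Theorem~\ref{thm:general_thm} here, since the digraph genuinely beats every oriented graph. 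The paper avoids all of this with a direct counting argument for $d=2$: it shows the cleared digraph is $\TT{3}$-free (a $\TT{3}$ plus the fact that each arc lies in a $\CC{k}$ yields intersecting closed walks of lengths $k$ and $k+1$ — this, not the digon, is the source of the $\CC{k+1}$), deduces $e(G)\le n^2/2$, and then counts $\CC{k}$'s by choosing the $k$ arcs in $(k-2)/2$ pairs plus a starting arc, using $\TT{3}$-freeness to bound each pair-choice by $n^2/4$. This gives $\frac{n}{k}(n/2)^{k-1}$ directly with no bipartiteness, no spectral step, and no stronger threshold on $\ell$. You would need to replace your $d=2$ argument with something of this kind, or else prove a bipartiteness statement under the weaker hypothesis $\ell\ge 2(k-1)^2$.
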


\begin{proof}
The lower bound is achieved by a balanced blow-up of $\CC{d}$. To prove the upper bound, consider an extremal directed graph $G$ and, as before, apply the directed version of Szemer\'edi's Regularity Lemma to remove all directed walks of length $\ell$ by losing $o(n^k)$ copies of $\CC{k}$. We may also assume that each arc is contained in $\CC{k}$. 

If $\ell$ is even, then a digon forms a directed walk of length $\ell$. Therefore, $G$ is an oriented graph and Theorem~\ref{thm:general_thm} implies the wanted bound.  

Consider now the case that both $\ell$ and $k$ are odd, and assume that $G$ contains a digon. Since every vertex is contained in some $\CC{k}$, we obtain intersecting $\CC{k}$ and a closed directed walk of length $2$, which easily creates a closed directed walk of length $\ell$.
Knowing that $G$ is an oriented graph, we obtain the wanted bound by Theorem~\ref{thm:general_thm} as before.

We are left with the case of odd $\ell$ and even $k$, which means that $d=2$. Observe that $G$ does not contain $\TT{3}$, because otherwise we have intersecting $\CC{k}$ and a closed directed walk of length $k+1$, which leads to a closed directed walk of length $\ell$ by Corollary~\ref{cor:coin_problem} as $\ell \ge (k-1)k$. This implies that $e(G) \leq \frac{n^2}{2}$ by the following inductive reasoning. If $G$ contains a digon on vertices $u$ and $v$, then, to avoid $\TT{3}$, for any vertex $w \in V(G\setminus\{u, v\})$ there are at most $2$ arcs between $w$ and $\{u, v\}$, hence $e(G) \leq 2 + 2(n-2) + e(G\setminus\{u, v\}) \leq \frac{n^2}{2}$. While if $G$ contains no digons, then $e(G) \leq \binom{n}{2} \leq \frac{n^2}{2}$. 
Knowing this, we can count the number of copies of $\CC{k}$ in $G$ as follows. Start with an arbitrary arc $v_1v_2$. Then,  choose an arc $v_3v_4$ such that $v_3 \in N^+(v_2)$. Observe that $v_2v_4$ is not an arc, as otherwise it forms a $\TT{3}$. Therefore, the number of possible candidates for $v_3v_4$ is $|N^+(v_2)|\cdot |V(G) \setminus N^+(v_2)| \leq \frac{n^2}{4}$. This reasoning can be continued for $v_5v_6, \ldots, v_{k-1}v_k$. Since each $\CC{k}$ is counted $k$ times in the process, the number of $\CC{k}$ is at most $\frac 1k e(G) \cdot \left(\frac{n^2}{4}\right)^{(k-2)/2} \leq \frac nk \left(\frac{n}{2}\right)^{k-1}$ as wanted.
\end{proof}

\section{Concluding remarks}\label{sec:conclusion}

In this article, we considered only the case where a directed cycle is forbidden, but one can note that the proven theorems imply similar results when a different orientation of a cycle is forbidden. 
For any cycle $C$ we define its \emph{cycle-type} as an absolute value between the number of arcs directed clockwise and the number of arcs directed counterclockwise.  
If a cycle $C$ has cycle-type $\ell \geq 3$, then $\ell$ is equal to the largest length of a directed cycle, to which $C$ has a homomorphism.
Therefore, Observation~\ref{homomorphic_img_removal} implies that $\ex(n,\CC{k},C) \leq \ex(n,\CC{k},\CC{\ell}) + o(n^k)$, and so the proven results also give upper bounds when $C$ is forbidden. 
In particular, since the extremal constructions in Theorems~\ref{thm:general_thm}, \ref{thm:rand_thm}, \ref{thm:ex_C_3_C_l}, \ref{thm:ex_c4_cl}, \ref{thm:ex_c5_cl} and \ref{thm:ex_c5_c8} do not contain any cycles of cycle-type equal to $\ell$, those theorems give sharp bounds (up to a lower order error term) for forbidding $C$.

\bibliography{refs}

\begin{thebibliography}{10}

\bibitem{directed_szemeredi}
Noga Alon and Asaf Shapira.
\newblock Testing subgraphs in directed graphs.
\newblock {\em Proceedings of the thirty-fifth annual ACM symposium on Theory
  of computing}, pages 700--709, 2003.

\bibitem{generalized_turan}
Noga Alon and Clara Shikhelman.
\newblock Many {$T$} copies in {$H$}-free graphs.
\newblock {\em Journal of Combinatorial Theory, Series B}, 121:146--172, 2016.

\bibitem{andrasfai-erdos-sos}
B{\'e}la Andr{\'a}sfai, Paul Erd{\H o}s, and Vera~T. S{\'o}s.
\newblock On the connection between chromatic number, maximal clique and
  minimal degree of a graph.
\newblock {\em Discrete Mathematics}, 8(3):205--218, 1974.

\bibitem{bekejanzer}
Csongor Beke and Oliver Janzer.
\newblock On the generalized {T}ur{\'a}n problem for odd cycles.
\newblock {\em SIAM Journal on Discrete Mathematics}, 38(3):2416--2428, 2024.

\bibitem{brauer1942problem}
Alfred Brauer.
\newblock On a problem of partitions.
\newblock {\em American Journal of Mathematics}, 64(1):299--312, 1942.

\bibitem{inducibility}
Dalton Burke, Bernard Lidick{\`y}, Florian Pfender, and Michael Phillips.
\newblock Inducibility of 4-vertex tournaments.
\newblock {\em arXiv:2103.07047}, 2021.

\bibitem{erdos1962}
Paul Erd{\H{o}}s.
\newblock On the number of complete subgraphs contained in certain graphs.
\newblock {\em Magyar Tudományos Akadémia Matematikai Kutató Intézetének
  Közleményei}, 7(3):459--464, 1962.

\bibitem{erdos1984}
Paul Erd{\H o}s.
\newblock On some problems in graph theory, combinatorial analysis and
  combinatorial number theory.
\newblock {\em Graph Theory and Combinatorics}, pages 1--17, 1984.

\bibitem{erdosstone}
Paul Erd{\H o}s and Arthur~Harold Stone.
\newblock On the structure of linear graphs.
\newblock {\em Bulletin of the American Mathematical Society}, 52:1087--1091,
  1946.

\bibitem{fan1950theorem}
Ky~Fan.
\newblock On a theorem of {W}eyl concerning eigenvalues of linear
  transformations: {II}.
\newblock {\em Proceedings of the National Academy of Sciences}, 36(1):31--35,
  1950.

\bibitem{gerbner2020generalized}
D{\'a}niel Gerbner, Ervin Gy{\H{o}}ri, Abhishek Methuku, and M{\'a}t{\'e}
  Vizer.
\newblock Generalized {T}ur{\'a}n problems for even cycles.
\newblock {\em Journal of Combinatorial Theory, Series B}, 145:169--213, 2020.

\bibitem{gishboliner2018}
Lior Gishboliner and Asaf Shapira.
\newblock A generalized {T}ur{\'a}n problem and its applications.
\newblock {\em Proceedings of the 50th Annual ACM SIGACT Symposium on Theory of
  Computing}, pages 760--772, 2018.

\bibitem{grzesik2012pentagon}
Andrzej Grzesik.
\newblock On the maximum number of five-cycles in a triangle-free graph.
\newblock {\em Journal of Combinatorial Theory, Series B}, 102(5):1061--1066,
  2012.

\bibitem{grzesikkielak}
Andrzej Grzesik and Bart{\l}omiej Kielak.
\newblock On the maximum number of odd cycles in graphs without smaller odd
  cycles.
\newblock {\em Journal of Graph Theory}, 99(2):240--246, 2022.

\bibitem{grzesik2023cycles}
Andrzej Grzesik, Daniel Kr\'al', L{\'a}szl{\'o}~M. Lov{\'a}sz, and Jan Volec.
\newblock Cycles of a given length in tournaments.
\newblock {\em Journal of Combinatorial Theory, Series B}, 158:117--145, 2023.

\bibitem{c5_without_c3}
Hamed Hatami, Jan Hladk{\`y}, Daniel Kr{\'a}l', Serguei Norine, and Alexander
  Razborov.
\newblock On the number of pentagons in triangle-free graphs.
\newblock {\em Journal of Combinatorial Theory, Series A}, 120(3):722--732,
  2013.

\bibitem{directed_cfour_density}
Ping Hu, Bernard Lidick\'y, and Jan Volec.
\newblock Inducibility of orientations of {$C_4$}.
\newblock Manuscript.

\bibitem{blowup_lemma}
J{\'a}nos Koml{\'o}s, G{\'a}bor~N. S{\'a}rk{\"o}zy, and Endre Szemer{\'e}di.
\newblock Blow-up lemma.
\newblock {\em Combinatorica}, 17:109--123, 1997.

\bibitem{kral_norin_volec}
Daniel Kr\'al', Sergey Norin, and Jan Volec.
\newblock A bound on the inducibility of cycles.
\newblock {\em Journal of Combinatorial Theory, Series A}, 161:359--363, 2019.

\bibitem{minc1988}
Henryk Minc.
\newblock {\em Nonnegative matrices}, volume 170.
\newblock Wiley New York, 1988.

\bibitem{even_cycles}
Oleg Pikhurko.
\newblock A note on the {T}ur{\'a}n function of even cycles.
\newblock {\em Proceedings of the American Mathematical Society},
  140(11):3687--3692, 2012.

\bibitem{Syl82}
James~J. Sylvester.
\newblock On subvariants, i.e. semi-invariants to binary quantics of an
  unlimited order.
\newblock {\em American Journal of Mathematics}, 5(1):79--136, 1882.

\bibitem{turan_thm}
Paul Tur\'an.
\newblock On an extremal problem in graph theory (in {H}ungarian).
\newblock {\em K\"{o}z\'{e}piskolai Matematikai \'{e}s Fizikai Lapok},
  48:436--452, 1941.

\bibitem{flagmatic}
Emil Vaughan.
\newblock Flagmatic software package, further developed by {J}akub {S}lia{\v
  c}an, available at \url{https://github.com/jsliacan/flagmatic}.

\end{thebibliography}
\bibliographystyle{plain}

\end{document}